\newtheorem{lemma}{Lemma}[section]
\newtheorem{Add in proof}{Add in proof}[section]
\newtheorem{prop}{Proposition}[section]
\newtheorem{defn}{Definition}
\newtheorem{theorem}{Theorem}
\newcommand{\R}{\ensuremath{\mathbb{R}}}
 \def\p#1#2{\dfrac{\partial
#1}{\partial#2}}
\def \p{\partial}
 \def\a{\alpha}
\def\intave#1{-\kern-10.7pt\int_{\,#1}}
\def\b{\beta}
\def\D{\nabla}
\def\g{\gamma}
\def\<{\langle}
\def\>{\rangle}
\def\({\left(}
\def\){\right)}
\def\limsup{\operatornamewithlimits{lim\,sup}}
\def\intave#1{-\kern-10.7pt\int_{\,#1}}
\newcommand{\na}{\ensuremath{\nabla}}
\newcommand\alabel[1]{\addtocounter{equation}{1}\tag{\theequation}\label{#1}}%allows to manually add eq number in align*
\begin{document}
\title{Convergence of  the Ginzburg-Landau approximation for the Ericksen-Leslie system}

\numberwithin{equation}{section}
\author{Zhewen Feng, Min-Chun Hong and Yu Mei}

\address{Zhewen Feng, Department of Mathematics, The University of Queensland\\
	Brisbane, QLD 4072, Australia}
\email{z.feng@uq.edu.au}

\address{Min-Chun Hong, Department of Mathematics, The University of Queensland\\
Brisbane, QLD 4072, Australia}
\email{hong@maths.uq.edu.au}

\address{Yu Mei, Department of Mathematics, The University of Queensland\\
Brisbane, QLD 4072, Australia}
\email{y.mei@uq.edu.au}

\begin{abstract}
	We establish the local well-posedness of  the general Ericksen-Leslie system in liquid crystals with the initial velocity and director field  in $H^1\times H^2_b$. In particular, we prove that the solutions of the Ginzburg-Landau  approximation system  converge smoothly to the solution of the Ericksen-Leslie system for any $t\in (0, T^*)$ with a maximal existence time $T^*$ of the Ericksen-Leslie system.
     \end{abstract}
\subjclass{AMS 35K50,  35Q30} \keywords{Ericksen-Leslie system, Ginzburg-Landau approximation, smooth convergence}

 \maketitle

\pagestyle{myheadings} \markright {Convergence of the Ginzburg-Landau approximation}

\section{Introduction}
 In the 1960s, Ericksen \cite{Er} and Leslie \cite{Le} proposed a celebrated hydrodynamic theory to describe the behavior of liquid crystals. The   Ericksen-Leslie theory has  been widely accepted since then as one of the most successful theories for modeling liquid crystal flows (c.f. \cite{Ste}). Let $v=(v^1,v^2,v^3)$ be the
velocity vector of the fluid and $u=(u^1,u^2,u^3)\in S^2$ the unit
direction vector. Then the Ericksen-Leslie system in $\R^3\times
[0,\infty )$ is given by (c.f. \cites {LL1,Ste})
 \begin{align}
&\p_tv+v\cdot\D v+\D P=\D\cdot(\sigma^E+\sigma^L),\label{E-L1}\\
&\D\cdot v=0,\label{E-L2}\\
& u\times\left(\g_1 N+\g_2Au-h\right)=0\label{E-L3},
\end{align}
where   $P$ represents  the pressure and $\sigma^E$ denotes the Ericksen stress tensor  given by
\begin{equation}\label{E-T}
\sigma^E=-\D u^T\frac{\p W(u,\D u)}{\p (\D u)}.
\end{equation}
Here the Oseen-Frank density $W(u,\D u)$ takes the form
\begin{equation*}
W(u,\D u)=k_1(\text{div } u)^2+k_2 (u\cdot\text{curl }u)^2+k_3
|u\times \text{curl } u|^2+(k_2+k_4)[\text {tr} (\nabla u)^2-(\text {div
}u)^2],
\end{equation*}
where $k_1,k_2,k_3,k_4$ are Frank's elastic constants. The Leslie stress tensor $\sigma^L$ satisfies the constitutive relation
\begin{equation}\label{L-T}
\sigma^L=\alpha_1(u\otimes u:A)u\otimes u+\alpha_2 N\otimes u+\alpha_3 u\otimes N+\alpha_4 A+\alpha_5 (Au)\otimes u+\alpha_6 u\otimes (Au),
\end{equation}
where $\alpha_i$, $i=1,2,\cdots, 6$ are the Leslie coefficients. The co-rotational time derivative $N$ of $u$  is defined by
\begin{equation}\label{co-der}
N=\p_tu+v\cdot\D u-\Omega u.
\end{equation}
We denote by $\Omega$ and $A$ the skew-symmetric and symmetric parts of the tensor $\D v$ respectively; that is
\[ \Omega=\dfrac{1}{2}(\nabla v-(\nabla v)^T),\qquad A=\dfrac{1}{2}(\nabla v+(\nabla v)^T).\]
The molecular field $h$ in (\ref{E-L3}) is given by
\begin{equation}\label{M-F}
h=\D\cdot\left(\frac{\p W(u,\D u)}{\p (\D u)}\right)-\frac{\p W(u,\D u)}{\p u}.
\end{equation}

In the sequel, the following assumptions are introduced: Frank's elastic constants $k_1,k_2,k_3,k_4$ satisfy the strong Ericksen inequalities (c.f.  \cite{Ball})
\begin{equation}\label{Co1}k_1> 0, \,k_2 >|k_4|,\,k_3>0, \,2k_1> k_2+k_4. \end{equation}
Under which there are positive constants $a,C>0$ such that the density $W(u,\na u)$ is equivalent to a form that satisfies
\begin{equation}\label{Co2}
 a |p|^2\leq W(z,p)\leq C|p|^2,\quad
W_{p_i^kp_j^l}(z,p) \xi_i^k \xi_j^l\geq a |\xi|^2
\end{equation} for any $\xi\in\mathbb M^{3\times 3}$, any $z\in \R^3$ and any $p\in\mathbb M^{3\times 3}$ (c.f. \cites{HM,Ericksen}). The Leslie coefficients $\alpha_i$, $i=1,2,\cdots, 6$, are assumed to satisfy the following conditions:
\begin{equation}\label{ag2}
\g_1=\alpha_3-\alpha_2>0,\quad \g_2=\alpha_6-\alpha_5,
\quad \alpha_2+\alpha_3=\alpha_6-\alpha_5,
\end{equation}
where the last equation is called the \emph{Parodi} relation (c.f. \cite{Ste}). Further, suppose that
\begin{equation}\label{Leslie coe-2}
\alpha_1\geq 0,\quad \alpha_4>0,\quad \beta:=\alpha_5+\alpha_6-\frac{\g_2^2}{\g_1}\geq 0,
\end{equation}
which ensures the energy-dissipation law of the general Ericksen-Leslie system.

The  Ericksen-Leslie system \eqref{E-L1}-\eqref{E-L3}   has attracted much attention in recent years.
 For the two-dimensional case, Lin-Lin-Wang \cite{LLW} and Hong \cite{Ho3} independently proved global existence and partial regularity of weak solutions to the simplified system; that is a special case where Frank's elastic constants in the isotropic case satisfy $k_1=k_2=k_3=1,k_4=0$  and the Leslie tensor is ignored (other than $\alpha_4\neq 0$). Hong-Xin \cite{HX} generalized these results to any positive $k_1, k_2, k_3$, but without the Leslie tensor. Later, Huang-Lin-Wang \cite{HLW} and Wang-Wang \cite{WW} obtained similar results in $\R^2$ for the system  \eqref{E-L1}-\eqref{E-L3} with the Leslie tensor. Lin-Wang \cite{LW}, Li-Titi-Xin \cite{LTX} and Wang-Wang-Zhang \cite{WWZ}  established uniqueness of global weak solutions of the the system  \eqref{E-L1}-\eqref{E-L3}.

In three dimensions, the question on global existence of weak solutions to the Ericksen-Leslie system \eqref{E-L1}-\eqref{E-L3} remains open.   Wen-Ding \cite{WD} established the local well-posedness of strong solutions to the simplified system without the Leslie stress tensor in the isotropic case ($k_1=k_2=k_3=1, k_4=0$ and only $\alpha_4\neq0$). Later, Fan-Guo \cite{FG} and Huang-Wang \cite{HW2} studied the Serrin and BKM type blow-up criteria for this simplified system, using ideas originating from the celebrated Navier-Stokes equation.  For the Ericksen-Leslie system with general Oseen-Frank density and without the Leslie tensor, Hong-Li-Xin \cite{HLX} proved the local well-posedness and blow-up criterions of strong solutions with initial data $(v_0, u_0)\in H^1(\R^3,\R^3)\times H^2_b(\R^3, S^2)$. For rough initial data,  Hineman-Wang \cite{HW} established the local well-posedness of solutions to the simplified system with initial velocity $v_0$ and director $u_0$ in uniformly local  $L^3$-integrable spaces respectively. See also Wang \cite{W} for the case with initial data in $BMO^{-1}\times BMO$. Recently, Hong-Mei \cite{HM} generalized the result in \cite{HW} to the case of any positive $k_1, k_2, k_3$ with initial data in uniformly local $L^3$ spaces,   but without the effect of the Leslie tensor.

Now, we consider the effect of Leslie stress tensor for the general Ericksen-Leslie system in three dimensions.  Wang-Zhang-Zhang \cite{WZZ} and Wang-Wang \cite{WW} proved the local well-posedness of solutions to the general Ericksen-Leslie system with initial data $(v_0, u_0)\in H^{2s}(\R^3,\R^3)\times H^{2s}_b(\R^3, S^2)$ with $s\geq 2$.
 In this article, we investigate the local well-posedness of strong solutions to the general Ericksen-Leslie system \eqref{E-L1}-\eqref{E-L3} with initial data $(v_0, u_0)\in H^1(\R^3,\R^3)\times H^2_b(\R^3, S^2)$.

For a given unit vector $b\in S^2$ and $m\in\mathbb{N}$, we denote
\begin{align}
H^m_{b}(\R^3;S^2):=\{u:u-b\in H^{m}(\R^3;\R^3), |u|=1 \text{ a.e. in }\R^3\}.
\end{align}
\begin{defn}
    For any $T>0$, $(v,u)$ is called a strong solution to the system \eqref{E-L1}-\eqref{E-L3} in $\R^3\times (0,T)$ if it satisfies the system \eqref{E-L1}-\eqref{E-L3} for almost every $(x,t)\in \R^3\times(0,T)$ and
    \begin{align*}
    & v\in L^\infty(0,T;H^1(\R^3))\cap L^2(0,T;H^2(\R^3)),\quad \p_tv\in L^2(0,T;L^2(\R^3)),\quad \text{div }v=0,\\
    & u\in L^\infty(0,T;H^2_b(\R^3))\cap L^2(0,T; H^3_b(\R^3)),\quad\p_tu\in L^2(0,T;H^1(\R^3)),\quad |u|=1.
    \end{align*}
\end{defn}

Firstly, we prove the local well-posedness of strong solutions to \eqref{E-L1}-\eqref{E-L3}:
\begin{theorem} \label{thm1}
     For any  $v_0\in H^1(\R^3,\R^3)$ and $u_0\in H^2_b(\R^3, S^2)$ with $\text{div } v_0=0$, there is a unique strong
    solution $(v, u)$ to the system \eqref{E-L1}-\eqref{E-L3}  in $\R^3\times[0, T^*)$ with initial data $(v_0,u_0)$.
    Moreover, there are two positive constants $\varepsilon_0$ and $R_0$ such that at a singular point $x_i$, the maximal existence time $T^*$ satisfies
    \begin{equation*}
    \limsup_{t\rightarrow T^*}\int_{B_R(x_i)}|\D u(\cdot,t)|^3+|v(\cdot,t)|^3\;dx\geq \varepsilon_0,
    \end{equation*}
    for any $R>0$ with $R\leq R_0$.
\end{theorem}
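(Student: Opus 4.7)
The plan is to construct solutions via the Ginzburg-Landau relaxation of the unit-length constraint, which is consistent with the theme of the paper and yields the smooth convergence advertised in the abstract as a byproduct. For each $\varepsilon>0$, replace \eqref{E-L3} by the parabolic equation
\[
\g_1(\p_tu^\varepsilon+v^\varepsilon\cdot\na u^\varepsilon-\Omega^\varepsilon u^\varepsilon)+\g_2A^\varepsilon u^\varepsilon-h^\varepsilon+\tfrac{1}{\varepsilon^2}(|u^\varepsilon|^2-1)u^\varepsilon=0,
\]
coupled with \eqref{E-L1}-\eqref{E-L2}, where $\sigma^E$ and $\sigma^L$ are still defined by \eqref{E-T}-\eqref{L-T} but without imposing $|u^\varepsilon|=1$. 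For each fixed $\varepsilon$, this is a coupled Navier-Stokes-parabolic system without a pointwise constraint, so local well-posedness in $H^1\times H^2_b$ follows by a standard Galerkin or contraction mapping argument.

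The crucial step is to derive a priori estimates uniform in $\varepsilon$ on a time interval $[0,T^*)$ that is itself independent of $\varepsilon$. The natural energy-dissipation law (precisely the reason for the conditions \eqref{ag2}-\eqref{Leslie coe-2}) controls $\|v^\varepsilon\|_{L^2}^2+\int W(u^\varepsilon,\na u^\varepsilon)\,dx+\tfrac{1}{4\varepsilon^2}\|\,|u^\varepsilon|^2-1\,\|_{L^2}^2$ together with its dissipation. Then I would test the $v$-equation against $-\laplacian v^\varepsilon$ and the $u$-equation against $\laplacian^2 u^\varepsilon$ (or $\p_t\laplacian u^\varepsilon$), using the coercivity of $W_{p_i^k p_j^l}$ in \eqref{Co2} to absorb the principal contribution from $h^\varepsilon$, and using $\g_1>0$ and $\beta\ge 0$ to absorb the cross-terms produced by the Leslie tensor and the co-rotational derivative \eqref{co-der}. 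The outcome should be a Gronwall-type inequality for $\mathcal E^\varepsilon(t)=\|v^\varepsilon(t)\|_{H^1}^2+\|u^\varepsilon(t)-b\|_{H^2}^2$, yielding a uniform lower bound on the existence time depending only on $\|v_0\|_{H^1}+\|u_0-b\|_{H^2}$.

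With these bounds in hand, the Aubin-Lions compactness lemma delivers a limit $(v,u)$ in the class required by the definition, and the $L^2$-control of $\varepsilon^{-1}(|u^\varepsilon|^2-1)$ forces $|u|=1$ a.e. Taking the cross product of the limit director equation with $u$ annihilates the penalty term and recovers \eqref{E-L3}, while uniqueness follows by a standard energy argument on the difference of two strong solutions. For the blow-up criterion, I would use an $\varepsilon$-regularity argument: if on some ball $B_R(x_0)$ the quantity $\int|v|^3+|\na u|^3\,dx$ stays below $\varepsilon_0$ as $t\to T^*$, then a local Caccioppoli-type estimate combined with this small $L^3$ control and Sobolev embedding bootstraps into a uniform local $H^2$ bound, contradicting the failure of extension at $x_0$. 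The principal obstacle throughout is the Leslie stress, which produces couplings of the form $N\otimes u$ and $(Au)\otimes u$ mixing the top-order spatial derivatives of $v$ with $\p_t u$; the Parodi relation and the positivity conditions $\g_1>0$, $\beta\ge 0$ are precisely what is needed to symmetrise and absorb these terms, both in the uniform-in-$\varepsilon$ estimates and in the passage to the limit.
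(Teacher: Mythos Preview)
Your overall strategy---Ginzburg--Landau relaxation, uniform-in-$\varepsilon$ estimates, Aubin--Lions compactness, cross-product to recover \eqref{E-L3}---matches the paper's. However, there is a genuine gap in your uniform estimates: you do not explain how to control the penalty term $\varepsilon^{-2}(|u^\varepsilon|^2-1)u^\varepsilon$ when testing against $\Delta^2 u^\varepsilon$ or $\p_t\Delta u^\varepsilon$. After integration by parts, such tests produce contributions like $\varepsilon^{-2}\nabla(|u^\varepsilon|^2-1)\cdot(\text{second-order terms})$, which a priori blow up as $\varepsilon\to 0$. The paper's key device (stated in the introduction and implemented in Lemmas~\ref{local estimate}--\ref{second order estimates}) is to first show $\tfrac12\le|u^\varepsilon|\le\tfrac32$ on a uniform time interval, and then use the director equation itself to express $\varepsilon^{-2}(1-|u^\varepsilon|^2)$ pointwise in terms of $\p_t u^\varepsilon$, $v^\varepsilon\cdot\na u^\varepsilon$, $\na v^\varepsilon$, $\na^2 u^\varepsilon$ and $|\na u^\varepsilon|^2$ (see \eqref{estimate1}, \eqref{estimate2}). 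Without this substitution the $H^2$ estimate cannot be closed uniformly in $\varepsilon$, and your Gronwall step would fail.

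A second, smaller divergence: you propose a direct global Gronwall for $\mathcal E^\varepsilon(t)$, whereas the paper closes the estimates through a local $L^3$-smallness argument (Proposition~\ref{prop2.1}). The nonlinear terms on the right-hand side have the form $\int(|v^\varepsilon|^2+|\na u^\varepsilon|^2)(|\na v^\varepsilon|^2+|\na^2 u^\varepsilon|^2+|\p_t u^\varepsilon|^2)\,dx$, and the paper absorbs them by choosing $R_0$ so that $\sup_{x_0}\int_{B_{R_0}(x_0)}(|v^\varepsilon|^3+|\na u^\varepsilon|^3)\,dx$ is small, combining this with a local pressure estimate (Lemma~\ref{pressure estimate}) and a covering argument. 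This is not merely cosmetic: the same mechanism drives the blow-up criterion, and the constants $\varepsilon_0,R_0$ in the theorem statement come directly from it. Your sketch of the blow-up criterion via ``local Caccioppoli plus small $L^3$'' is the right intuition, but you should recognise that the paper actually builds the entire uniform existence time on this local smallness, not on a superlinear Gronwall.
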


In line with previous efforts, the proof of Theorem \ref{thm1} utilizes the Ginzburg-Landau approximation.  The Ginzburg-Landau functional  was introduced  in 1950 (\cite{GL}) to study the phase transition in superconductivity.  For a  parameter $\varepsilon>0$,  the Ginzburg-Landau functional of $u:\Omega\rightarrow \R^3$ is defined by
\begin{equation}\label{GL}
E_{\varepsilon}(u; \Omega):=\int_{\Omega}\left(\frac 12 |\nabla u|^2+\frac{1}{4\varepsilon^2}(1-|u|^2)^2\right)\;dx.
\end{equation}
  There are many impressive results concerning  convergence of the Ginzburg-Landau  approximation system as $\varepsilon \to 0$.  In \cite{CS}, Chen-Struwe proved global existence of weak solutions to the heat flow of harmonic maps using the Ginzburg-Landau approximation. See    a further result in  \cite{BOS} on the convergence of the gradient flow of the Ginzburg-Landau approximation. On the other hand,
  Bethuel, Brezis and H\'elein \cites{BBH,BBO}  proved asymptotic behavior for minimizers of $E_{\varepsilon}$ in two dimensional  star-shaped domains as $\varepsilon \to 0$  (see also
  \cite{Struwe} for the case of non-star-shaped domains).
Motivated by above results, Lin-Liu \cites{LL1,LL2}  introduced
the Ginzburg-Landau approximation system for the Ericksen-Leslie system
\begin{align}
&\p_t v_\varepsilon+v_\varepsilon\cdot\D v_\varepsilon+\D P_\varepsilon=\D\cdot(\sigma_\varepsilon^E+\sigma_\varepsilon^L),\label{G-L1}\\
&\nabla\cdot v_\varepsilon=0,\label{G-L2}\\
&\g_1N_\varepsilon+\g_2 A_\varepsilon u_\varepsilon=h_\varepsilon \label{G-L3}
\end{align}
for $\varepsilon>0$, where $u_\varepsilon$, $v_\varepsilon$ are the direction and velocity field of the Ginzburg-Landau system and $h_\varepsilon$ is given by
\begin{equation}\label{M-F1}
h^i_\varepsilon=\D_{\alpha} \left(\frac{\p W(u_\varepsilon,\D u_\varepsilon)}{\partial p^i_{\alpha}}\right)-\frac{\p W(u_\varepsilon,\D u_\varepsilon)}{\p u_\varepsilon^i}+\frac{1}{\varepsilon^2}u^i_\varepsilon(1-|u_\varepsilon|^2).
\end{equation}
  Lin-Liu \cites{LL1,LL2}  proved   global existence of classical solutions in two dimensions and weak solutions  in three dimensions to the Ginzburg-Landau system (see also \cite{CRW} for the $\gamma_2\neq 0$ case).     Lin-Liu  \cite {LL2} also
analyzed the limit of solutions
$(v_{\varepsilon},u_{\varepsilon})$ of the Ginzburg-Landau system  as
$\varepsilon\to 0$, but it is not clear that the limiting solution
satisfies the original Ericksen-Leslie system   with
$|u|=1$.  In the study of  numerical context,
 it is a widely used approach to handle the constraint $|u| = 1$ in the Ericksen-Leslie equations through the Ginzburg-Landau approximation system \eqref {G-L1}-\eqref{G-L3} (c.f. \cites{LW2,WN}).
Therefore, it is an important question  to study the convergence of weak solutions to the Ginzburg-Landau approximation system \eqref {G-L1}-\eqref{G-L3}.   Hong \cite{Ho3} and Hong-Xin \cite{HX} proved the local existence of weak solutions of the  Ericksen-Leslie system without the Leslie stress tensor in $\R^2$ using the Ginzburg-Landau approximation approach.  For the three-dimensional problem,  the convergence of solutions to the Ginzburg-Landau approximation system is challenging in the framework of weak solutions, owing to a lack of uniform a priori estimates (refer to \cites{LL2,LW}).   Hong-Li-Xin \cite{HLX} first justified the local (in time) convergence of strong solutions to \eqref{G-L1}-\eqref{G-L3} without Leslie stress tensor for initial velocity and director field in $H^1(\R^3)\times H^2_b(\R^3)$.  We would like to point out that by using the Ginzburg-Landau approximation,  Lin-Wang \cite{LW} proved global existence of weak solutions to the simplified system in dimension three with initial velocity and director in $L^2\times H^1$ and hemisphere condition on the director.
 Building on the ideas of \cites{Ho3, HX, HLX}, we prove Theorem  \ref{thm1} by establishing the convergence of strong solutions to \eqref{G-L1}-\eqref{G-L3}, when the Leslie stress tensor is present. One of the key ideas is that when $|u_\varepsilon|$ is close to $1$, we handle the singular term $\frac{1-|u_\varepsilon|^2}{\varepsilon^2}$ using \eqref{G-L3}.

Concerning the Lin-Liu problem on the convergence of the Ginzburg-Landau approximation,   Hong-Li-Xin \cite{HLX} proved the strong convergence of the Ginzburg-Landau approximation system up to the maximal existence time of the Ericksen-Leslie system without the  Leslie stress tensor.   In this paper,  we extend the result in \cite{HLX} to the general Ericksen-Leslie system \eqref{E-L1}-\eqref{E-L3} with the  Leslie stress tensor.

\begin{theorem}\label{thm2} For each $\varepsilon >0$, there is a unique strong  solution $(v_\varepsilon,u_\varepsilon)$ to the system \eqref{G-L1}-\eqref{G-L3} in  $\R^3\times[0,T^*_\varepsilon)$ with  initial data $(v_0, u_0) \in H^1(\R^3)\times H^2_b(\R^3)$ satisfying $div\;v_0=0$, where $T^*_\varepsilon$ is the maximal existence time.  Let $T^*$ be the maximal existence time  of the strong solution $(v, u)$ to the system \eqref{E-L1}-\eqref{E-L3}    with the same initial data $(v_0,u_0)$ in Theorem  \ref{thm1}. Then, we have $(\D u,v)\in C^\infty(\tau,T ;C_{loc}^\infty(\R^3))$ with any $(\tau, T)\subset(0, T^*)$. Moreover,
 for any $T\in(0, T^*)$,  there exists a small positive  $\varepsilon_T$ such that $T^\ast_\varepsilon\geq T$ for any  $\varepsilon\leq \varepsilon_T$, and  as $\varepsilon\to 0$,
\begin{equation}\label{con1}
(\D u_\varepsilon,v_\varepsilon)\rightarrow(\D u,v),\qquad\text{in }~~L^\infty(0,T;L^2(\R^3))\cap L^2(0,T;H^1(\R^3))
\end{equation}
and
\begin{equation}\label{con2}
(\D u_\varepsilon,v_\varepsilon)\rightarrow(\D u,v),\qquad\text{in }~~C^\infty(\tau,T;C_{loc}^\infty(\R^3)) \quad\text{for  any }\tau>0.
\end{equation}
\end{theorem}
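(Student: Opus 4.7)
The plan is to prove Theorem \ref{thm2} in four steps paralleling the strategy of \cite{HLX}, but with substantial new work to absorb the Leslie stress tensor $\sigma^L$. First, for each fixed $\varepsilon>0$ the penalty $\varepsilon^{-2}u_\varepsilon(1-|u_\varepsilon|^2)$ is smooth in $u_\varepsilon$, so a Galerkin or contraction-mapping scheme essentially identical to the one proving Theorem \ref{thm1} (but now without the constraint $|u|=1$) produces a unique strong solution $(v_\varepsilon,u_\varepsilon)$ on a maximal interval $[0,T^\ast_\varepsilon)$, together with a blow-up criterion in terms of a local $L^3$ concentration of $(\D u_\varepsilon,v_\varepsilon)$.

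The core of the argument is to derive a priori estimates on $(v_\varepsilon,u_\varepsilon)$ that are uniform in $\varepsilon$ on every $[0,T]$ with $T<T^\ast$. Starting from the basic energy law of \eqref{G-L1}--\eqref{G-L3}, the structural conditions \eqref{Co1}--\eqref{Leslie coe-2} yield dissipation for $\|\D v_\varepsilon\|_{L^2}^2+\|N_\varepsilon\|_{L^2}^2$ and control on $\varepsilon^{-1}(1-|u_\varepsilon|^2)$ in $L^\infty_tL^2_x$. To pass to higher derivatives one must confront the singular term $\varepsilon^{-2}(1-|u_\varepsilon|^2)u_\varepsilon$. The key device, alluded to in the introduction, is to take the inner product of \eqref{G-L3} with $u_\varepsilon$: since $\Omega_\varepsilon u_\varepsilon\cdot u_\varepsilon=0$, one obtains
$$\frac{|u_\varepsilon|^2(1-|u_\varepsilon|^2)}{\varepsilon^2}=\g_1\,u_\varepsilon\cdot(\p_tu_\varepsilon+v_\varepsilon\cdot\D u_\varepsilon)+\g_2\,A_\varepsilon:(u_\varepsilon\otimes u_\varepsilon)-u_\varepsilon\cdot\Bigl(\D\cdot\tfrac{\p W}{\p(\D u_\varepsilon)}-\tfrac{\p W}{\p u_\varepsilon}\Bigr),$$
so the singular factor is expressed in terms of quantities already controlled by the dissipation and lower-order Sobolev norms. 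A comparison-principle argument applied to the equation satisfied by $|u_\varepsilon|^2-1$ then shows $|u_\varepsilon|\leq 1$ (or at least $|u_\varepsilon|\to 1$ uniformly), so $|u_\varepsilon|^2$ is bounded away from $0$ and the identity can be inverted. Combining this trick with standard energy estimates on $\D^2 u_\varepsilon$ and $\D v_\varepsilon$, using $H^1\hookrightarrow L^6$ and Young's inequality, produces a bound
$$\sup_{0\leq t\leq T}\bigl(\|v_\varepsilon\|_{H^1}^2+\|\D u_\varepsilon\|_{H^1}^2\bigr)+\int_0^T\|v_\varepsilon\|_{H^2}^2+\|\D u_\varepsilon\|_{H^2}^2\,dt\leq C(T,v_0,u_0),$$
which by the blow-up criterion of the first step forces $T^\ast_\varepsilon\geq T$ for all sufficiently small $\varepsilon$.

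With the uniform bound in hand, the strong convergence \eqref{con1} is obtained by an energy estimate on the difference $(v_\varepsilon-v,u_\varepsilon-u)$, after subtracting the two systems and once more using the scalar identity above to handle the penalty contribution $\varepsilon^{-2}(1-|u_\varepsilon|^2)u_\varepsilon$, which vanishes in the limit because $\varepsilon^{-1}(1-|u_\varepsilon|^2)$ is uniformly bounded in $L^\infty_tL^2_x$ and converges weakly to $0$. Aubin-Lions compactness combined with uniqueness in Theorem \ref{thm1} identifies the limit with $(v,u)$. For the smooth convergence \eqref{con2} one performs parabolic bootstrap on compact subsets: the equations for $(v_\varepsilon,u_\varepsilon)$ form a coupled Stokes-parabolic system whose coefficients are uniformly bounded and smooth once the $H^1\times H^2$ estimate is available, so successive differentiation combined with interior Schauder estimates gives uniform-in-$\varepsilon$ $C^{k,\a}_{loc}$ bounds on $(\D u_\varepsilon,v_\varepsilon)$ on any $[\tau,T]\times B_R$ with $\tau>0$; the strong convergence then upgrades to $C^\infty_{loc}$ via Arzel\`a-Ascoli.

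The main obstacle is unquestionably the uniform $H^1\times H^2$ estimate in the second step. The Leslie stress tensor $\sigma^L$ introduces many new cross-terms coupling $\D v_\varepsilon$, $N_\varepsilon$ and $A_\varepsilon u_\varepsilon$ at the top order; these only close because the Parodi relation \eqref{ag2} together with \eqref{Leslie coe-2} makes the quadratic form on $(\D^2 u_\varepsilon,\D v_\varepsilon)$ in the energy identity coercive. Reconciling this coercivity with the necessary use of the scalar identity above to tame the singular penalty term is the technical heart of the proof, and is where the analysis departs substantively from \cite{HLX}.
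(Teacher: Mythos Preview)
Your sketch has the right building blocks but a genuine gap in the second step, and it is precisely the step you flag as ``the main obstacle''. You claim that the energy estimates directly yield a uniform-in-$\varepsilon$ bound
\[
\sup_{0\leq t\leq T}\bigl(\|v_\varepsilon\|_{H^1}^2+\|\D u_\varepsilon\|_{H^1}^2\bigr)\leq C(T,v_0,u_0)
\]
on every $[0,T]$ with $T<T^\ast$. But nothing in your argument uses the limit solution $(v,u)$ or the hypothesis $T<T^\ast$; the only input is the initial data. The closed $H^1\times H^2$ estimate you describe (even with the scalar identity for $\varepsilon^{-2}(1-|u_\varepsilon|^2)$) requires an $L^3$-smallness condition on $(\D u_\varepsilon,v_\varepsilon)$ to absorb the nonlinear terms, and that smallness is only known to persist for a \emph{short} time $T_M$ depending on the initial data bound $M$ (this is Proposition~\ref{prop2.1}). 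If one could get the bound on arbitrary $[0,T]$ from the initial data alone, the same argument would apply to the limit system and $T^\ast$ could never be finite.

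What is missing is the extension mechanism that links $T_M$ to $T^\ast$. The paper fixes $T<T^\ast$, sets $M=2\sup_{[0,T]}\|(\D u,v)\|_{H^1}^2$, and runs Proposition~\ref{prop2.1} on $[0,T_M]$. Then two further ingredients are needed. First, uniform higher-order estimates on $[\tau,T_M]$ (Lemma~\ref{higher estimates lemma}), proved by induction in the Sobolev index using commutator bounds and the scalar identity; your ``parabolic bootstrap / interior Schauder'' remark is too vague here, since the singular penalty term reappears at every order and must be controlled via the same device. Second, an energy-identity comparison (Lemma~\ref{lem4.2}) showing that the \emph{global} energies of $(v_\varepsilon,u_\varepsilon)$ and $(v,u)$ coincide in the limit, which combined with the higher estimates forces $\|(\D u_\varepsilon,v_\varepsilon)(T_M)\|_{H^1}^2\to\|(\D u,v)(T_M)\|_{H^1}^2\leq M/2$ and $\varepsilon^{-2}\|1-|u_\varepsilon(T_M)|^2\|_{H^1}^2\to 0$. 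Only then can one reapply Proposition~\ref{prop2.1} with initial time $T_M$ and iterate to reach $T$.

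Two smaller points. Your comparison-principle claim that $|u_\varepsilon|\leq 1$ is not available here: with the general Oseen--Frank density and the Leslie terms, the equation for $|u_\varepsilon|^2-1$ does not obviously satisfy a maximum principle. The paper instead obtains $\frac34\leq|u_\varepsilon|\leq\frac54$ from the $L^\infty$ bound on $1-|u_\varepsilon|^2$ via Gagliardo--Nirenberg and the uniform $\varepsilon^{-2}\|1-|u_\varepsilon|^2\|_{H^1}^2$ control. And for \eqref{con1}, the paper does not estimate differences directly but uses the energy-identity comparison plus lower semicontinuity; your difference-estimate approach could work but would need the higher-order bounds already in hand.
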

We would like to point out  that   the  smooth convergence   in  \eqref {con2}   is a new result even for the Ericksen-Leslie system without the  Leslie stress tensor.
One of the key proofs to Theorem \ref{thm2} is to establish Proposition \ref{prop2.1} under the condition that $(v_{0,\varepsilon}, u_{0,\varepsilon})$ satisfies
\begin{equation}\label{con3}
\|u_{0,\varepsilon}-b\|_{H^2(\R^3)}^2+\|v_{0,\varepsilon}\|_{H^1(\R^3)}^2+\frac 1{\varepsilon^2}\|(1-|u_{0,\varepsilon}|^2)\|_{H^1(\R^3)}^2\leq M
\end{equation} for a positive constant $M$ independent of $\varepsilon$. Note that the condition \eqref{con3} does not involve any condition on $\|\p_t u_{0,\varepsilon}\|_{L^2(\R^3)}$, which differs from the one in \cite{HLX}. To prove Proposition \ref{prop2.1}, we establish a local estimate  on the  pressure in Lemma \ref{pressure estimate} and derive a local $L^3$-estimate using an interpolation inequality and a covering argument, which is similar to the argument in \cite{HM}. By applying Proposition \ref{prop2.1}, we prove that as $\varepsilon\to 0$, the solutions $(v_\varepsilon,u_\varepsilon)$ of \eqref{G-L1}-\eqref{G-L3} converge  strongly to the solution $(v,u)$ of the system \eqref{E-L1}-\eqref{E-L3}  in $\R^3\times (0, T_M]$  with  a uniform constant $T_M>0$ depending only on $M$.

The second key proof  to Theorem \ref{thm2} is to derive sophisticated higher order estimates of $(v_\varepsilon,\na u_\varepsilon)$ with uniform bounds in $\varepsilon$ in Lemma \ref{higher estimates lemma}, which  implies   the smooth convergence results of Ginzburg-Landau approximation systems  in $\R^3\times (0, T_M]$.
Let $T^\ast$  be the maximal existence time of the solution  $(v, u)$ to the Ericksen-Leslie system. For any $T<T^*$,     we choose
$M= 2\sup_{0\leq t\leq T}\|(\nabla u,v)\|_{H^1(\R^3)}^2$. Then we
combine the energy identities  in Lemma \ref{lem4.2}   with  the higher order estimates  to verify  that    $(v_\varepsilon, u_\varepsilon)$ satisfies \eqref {con3} at $t=T_M$. Therefore,  the solutions $(v_\varepsilon,u_\varepsilon)$ to the Ginzburg-Landau system  converge smoothly to the solution $(v,u)$  in $\R^3\times (0, 2T_M]$ for sufficiently small $\varepsilon$. Finally, we  establish the smooth convergence of solutions to  Ginzburg-Landau approximation systems  for any $T<T^*$.

  The   paper is organized as follows. In Section 2, we obtain some a priori estimates of the Ericksen-Leslie system \eqref{E-L1}-\eqref{E-L3}. In Section 3, we  establish Proposition \ref{prop2.1} and prove Theorem \ref{thm1}. In Section 4, we establish higher order estimates of the Ginzburg-Landau approximation system and prove Theorem \ref{thm2}.

\section{a priori estimates}
In this section, we derive  a priori estimates for strong solutions to the Ginzburg-Landau system \eqref{G-L1}-\eqref{G-L3}. First,
 we note that  the  equation   \eqref{E-L3} is equivalent to
\begin{equation}\label{E-L3-0}
\g_1 N+\g_2(Au-(u^TAu)u)=h-(u\cdot h)u
\end{equation}
 by taking the vector cross product to \eqref{E-L3} with $u$ and using the fact  that $|u|=1$.

Then we have the following basic energy identity:
\begin{lemma}\label{energy-identity}
	Let $(v_\varepsilon,u_\varepsilon)$ be a strong solution to the system \eqref{G-L1}-\eqref{G-L3} in $\R^3\times (0,T_\varepsilon)$. Then  for any $t\in(0,T_\varepsilon)$ we have
	\begin{align}\label{basic energy}
	&\frac{d}{dt}\int_{\R^3}\left(\frac{|v_\varepsilon|^2}{2}+W(u_\varepsilon,\D u_\varepsilon)+\frac{1}{4\varepsilon^2}(1-|u_\varepsilon|^2)^2\right)\,dx+\alpha_4\int_{\R^3}|A_\varepsilon|^2\,dx\\
	&+\alpha_1\int_{\R^3}|u_\varepsilon^TA_\varepsilon u_\varepsilon|^2\,dx+\beta\int_{\R^3}|A_\varepsilon u_\varepsilon|^2\,dx+\frac{1}{\g_1}\int_{\R^3}|h_\varepsilon|^2\,dx=0.\nonumber
	\end{align}
\end{lemma}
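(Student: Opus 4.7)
The plan is to produce the identity by testing \eqref{G-L1} against $v_\varepsilon$ and adding the time derivative of the elastic plus penalty energy. Testing \eqref{G-L1} with $v_\varepsilon$ and using $\D\cdot v_\varepsilon=0$ makes the transport and pressure terms disappear, leaving
\[\frac12\frac{d}{dt}\int_{\R^3}|v_\varepsilon|^2\,dx=-\int_{\R^3}\sigma_\varepsilon^E:\D v_\varepsilon\,dx-\int_{\R^3}\sigma_\varepsilon^L:\D v_\varepsilon\,dx.\]
By the chain rule together with one integration by parts and the definition \eqref{M-F1} of $h_\varepsilon$ (the $\varepsilon^{-2}$ penalty coming from $\frac{d}{dt}\frac{(1-|u_\varepsilon|^2)^2}{4\varepsilon^2}$), one gets
\[\frac{d}{dt}\int_{\R^3}\Bigl(W(u_\varepsilon,\D u_\varepsilon)+\tfrac{(1-|u_\varepsilon|^2)^2}{4\varepsilon^2}\Bigr)\,dx=-\int_{\R^3}h_\varepsilon\cdot\p_tu_\varepsilon\,dx.\]

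Next I will handle the Ericksen stress via a Noether-type identity. A direct computation, using the definition \eqref{E-T} together with \eqref{M-F1}, yields
\[\p_\a\sigma^E_{\a\b}=\p_\b\!\Bigl(W-\tfrac{(1-|u_\varepsilon|^2)^2}{4\varepsilon^2}\Bigr)-h^k_\varepsilon\,\p_\b u^k_\varepsilon,\]
so that the scalar gradient is absorbed into the pressure and vanishes against $v_\varepsilon$ by incompressibility, giving $\int\sigma_\varepsilon^E:\D v_\varepsilon\,dx=\int(v_\varepsilon\cdot\D u_\varepsilon^k)h^k_\varepsilon\,dx$. Using \eqref{co-der} to write $\p_tu_\varepsilon=N_\varepsilon-v_\varepsilon\cdot\D u_\varepsilon+\Omega_\varepsilon u_\varepsilon$, the transport contribution in $-\int h_\varepsilon\cdot\p_tu_\varepsilon\,dx$ exactly cancels this Ericksen term, so that up to this point
\[\frac{d}{dt}\text{(energy)}=-\int_{\R^3}\sigma_\varepsilon^L:\D v_\varepsilon\,dx-\int_{\R^3}h_\varepsilon\cdot N_\varepsilon\,dx-\int_{\R^3}h_\varepsilon\cdot\Omega_\varepsilon u_\varepsilon\,dx.\]

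Now I will split $\D v_\varepsilon=A_\varepsilon+\Omega_\varepsilon$ and decompose each summand of \eqref{L-T}. The antisymmetric parts of the $\alpha_2$/$\alpha_3$ and $\alpha_5$/$\alpha_6$ blocks contribute, after using $\g_1=\alpha_3-\alpha_2$, $\g_2=\alpha_6-\alpha_5$ and the director equation \eqref{G-L3}, precisely $-h_\varepsilon\cdot\Omega_\varepsilon u_\varepsilon$, which cancels the last term in the display above. The symmetric contribution gives
\[\sigma_\varepsilon^L:A_\varepsilon=\alpha_1(u_\varepsilon^TA_\varepsilon u_\varepsilon)^2+\alpha_4|A_\varepsilon|^2+(\alpha_2+\alpha_3)N_\varepsilon\cdot(A_\varepsilon u_\varepsilon)+(\alpha_5+\alpha_6)|A_\varepsilon u_\varepsilon|^2.\]

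Finally I eliminate $\int h_\varepsilon\cdot N_\varepsilon\,dx$ using \eqref{G-L3}. Taking the inner product of $h_\varepsilon=\g_1N_\varepsilon+\g_2 A_\varepsilon u_\varepsilon$ with $N_\varepsilon$ and with itself yields
\[\int_{\R^3}h_\varepsilon\cdot N_\varepsilon\,dx=\frac{1}{\g_1}\int_{\R^3}|h_\varepsilon|^2\,dx-\g_2\int_{\R^3}N_\varepsilon\cdot A_\varepsilon u_\varepsilon\,dx-\frac{\g_2^2}{\g_1}\int_{\R^3}|A_\varepsilon u_\varepsilon|^2\,dx.\]
Assembling all terms, the coefficient in front of $\int N_\varepsilon\cdot A_\varepsilon u_\varepsilon\,dx$ equals $\g_2-(\alpha_2+\alpha_3)$, which vanishes by the Parodi relation in \eqref{ag2}; the $|A_\varepsilon u_\varepsilon|^2$ coefficient collapses to $\alpha_5+\alpha_6-\g_2^2/\g_1=\beta$ from \eqref{Leslie coe-2}. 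The main obstacle is purely bookkeeping: one must carefully track signs among the Ericksen, Leslie, and director contributions so that the $v_\varepsilon\cdot\D u_\varepsilon$ and $\Omega_\varepsilon u_\varepsilon$ pieces cancel, and then observe that Parodi is exactly the relation that kills the remaining indefinite cross term $N_\varepsilon\cdot A_\varepsilon u_\varepsilon$, producing the nonnegative dissipation asserted in \eqref{basic energy}.
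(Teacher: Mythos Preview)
Your argument is correct and follows essentially the same route as the paper. The paper tests \eqref{G-L1} with $v_\varepsilon$, tests \eqref{G-L3} with $\tfrac{1}{\g_1}h_\varepsilon$, and then combines; you instead compute $\tfrac{d}{dt}\int(W+\text{penalty})=-\int h_\varepsilon\cdot\p_tu_\varepsilon$ directly and use the divergence identity for $\sigma^E$ in place of the paper's integration-by-parts computation \eqref{2.6}, but these are algebraically equivalent reorganizations of the same cancellations (the $v_\varepsilon\cdot\D u_\varepsilon$ and $\Omega_\varepsilon u_\varepsilon$ terms drop, Parodi kills $N_\varepsilon\cdot A_\varepsilon u_\varepsilon$). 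One cosmetic slip: in your Noether identity the scalar should be $W+\tfrac{(1-|u_\varepsilon|^2)^2}{4\varepsilon^2}$ rather than with a minus sign, but since it is a full gradient and is paired with the divergence-free $v_\varepsilon$, this has no effect on the proof.
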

\begin{proof}
	Multiplying \eqref{G-L1} by $v_\varepsilon$, using \eqref{G-L2} and integrating by parts yield
	\begin{align}\label{2.2}
	\frac{1}{2}\frac{d}{dt}\int_{\R^3}|v_\varepsilon|^2\,dx+\int_{\R^3}\sigma^L_\varepsilon:\D v_\varepsilon \,dx=-\int_{\R^3}\sigma^E_\varepsilon:\D v_\varepsilon \,dx.
	\end{align}
	Since $A_\varepsilon$ is symmetric and $\Omega_\varepsilon$  is antisymmetric, it follows from \eqref{L-T}, \eqref{ag2} and \eqref{G-L3} that
	\begin{align}\label{2.3}
	&\int_{\R^3}\sigma^L_\varepsilon:\D v_\varepsilon \,dx=\int_{\R^3}\sigma^L_\varepsilon:(A_\varepsilon+\Omega_\varepsilon)\,dx\\
	=&\int_{\R^3}\big[\alpha_1|u_\varepsilon^TA_\varepsilon u_\varepsilon|^2+\alpha_4|A_\varepsilon|^2+(\alpha_5+\alpha_6)|A_\varepsilon u_\varepsilon|^2\nonumber\\
	&-(\g_1N_\varepsilon+\g_2 A_\varepsilon u_\varepsilon)\cdot(\Omega_\varepsilon  u_\varepsilon)+\g_2N_\varepsilon\cdot(A_\varepsilon u_\varepsilon)\big]\,dx\nonumber\\
	=&\alpha_1\int_{\R^3}|u_\varepsilon^TA_\varepsilon u_\varepsilon|^2\,dx+\alpha_4\int_{\R^3}|A_\varepsilon|^2\,dx+(\alpha_5+\alpha_6-\frac{\g_2^2}{\g_1})\int_{\R^3}|A_\varepsilon u_\varepsilon|^2\,dx\nonumber\\
	&\quad-\int_{\R^3}h_\varepsilon^T\Omega_\varepsilon u_\varepsilon \,dx+\frac{\g_2}{\g_1}\int_{\R^3}h_\varepsilon^TA_\varepsilon u_\varepsilon \,dx.\nonumber
	\end{align}
	Substituting \eqref{2.3} into \eqref{2.2} and using \eqref{E-T}, we have
	\begin{align}\label{2.4.0}
	&\frac{1}{2}\frac{d}{dt}\int_{\R^3}|v_\varepsilon|^2\,dx+\alpha_1\int_{\R^3}|u_\varepsilon^TA_\varepsilon u_\varepsilon|^2\,dx+\alpha_4\int_{\R^3}|A_\varepsilon|^2\,dx+\beta\int_{\R^3}|A_\varepsilon u_\varepsilon|^2\,dx\\
	=&\int_{\R^3}\D_iu_\varepsilon^kW_{p_j^k}(u_\varepsilon,\D u_\varepsilon)\D_j v_\varepsilon^i\,dx+\int_{\R^3}h_\varepsilon^T\Omega_\varepsilon u_\varepsilon \,dx-\frac{\g_2}{\g_1}\int_{\R^3}h_\varepsilon^TA_\varepsilon u_\varepsilon \,dx.\nonumber
	\end{align}
	
	On the other hand, multiplying \eqref{G-L3} by $\frac{1}{\g_1}h_\varepsilon$, integrating over $\R^3$ and using \eqref{co-der}, we have
	\begin{align}\label{2.4}
	&-\int_{\R^3}\p_tu_\varepsilon\cdot h_\varepsilon \,dx+\frac{1}{\g_1}\int_{\R^3}|h_\varepsilon|^2\,dx\\
	=&\int_{\R^3}(v_\varepsilon\cdot \D) u_\varepsilon\cdot h_\varepsilon \,dx-\int_{\R^3}h_\varepsilon^T\Omega_\varepsilon u_\varepsilon \,dx+\frac{\g_2}{\g_1}\int_{\R^3}h_\varepsilon^TA_\varepsilon u_\varepsilon \,dx.\nonumber
	\end{align}
	It follows from \eqref{M-F1} and integration by parts that
	\begin{align}\label{2.5}
	-\int_{\R^3}\p_tu_\varepsilon\cdot h_\varepsilon \,dx=&\int_{\R^3}\left(\p_t\D_\alpha u^i_\varepsilon W_{p^i_\alpha}(u_\varepsilon,\D u_\varepsilon)+\p_tu^i_{\varepsilon}W_{u_\varepsilon^i}(u_\varepsilon,\D u_\varepsilon)\right)\,dx\\
	&-\frac{1}{\varepsilon^2}\int_{\R^3}\p_t u_\varepsilon\cdot u_\varepsilon(1-|u_\varepsilon|^2)\,dx\nonumber\\
	=&\frac{d}{dt}\int_{\R^3}\left(W(u_\varepsilon,\D u_\varepsilon)+\frac{1}{4\varepsilon^2}(1-|u_\varepsilon|^2)^2\right)\,dx.\nonumber
	\end{align}
  Using \eqref{G-L2} and   integration by parts, we have
	\begin{align}\label{2.6}
	&\int_{\R^3}(v_\varepsilon\cdot \D) u_\varepsilon\cdot h_\varepsilon \,dx\\
	=&\frac{1}{\varepsilon^2}\int_{\R^3}(v_\varepsilon\cdot\D) u_\varepsilon\cdot u_\varepsilon(1-|u_\varepsilon|^2)\,dx \nonumber\\
	&+\int_{\R^3}v_\varepsilon^k\D_k u_\varepsilon^i(\D_{\alpha} W_{p_\alpha^i}(u_\varepsilon,\D u_\varepsilon)-W_{u_\varepsilon^i}(u_\varepsilon,\D u_\varepsilon))\,dx\nonumber\\
	=&-\frac{1}{4\varepsilon^2}\int_{\R^3}(v_\varepsilon\cdot\D)(1-|u_\varepsilon|^2)^2\,dx-\int_{\R^3}\D_\alpha v^k_\varepsilon\D_ku_\varepsilon^iW_{p_\alpha^i}(u_\varepsilon,\D u_\varepsilon)\,dx\nonumber\\
	&-\int_{\R^3}v_\varepsilon^k\left(\D_k\D_{\alpha}u_\varepsilon^i W_{p_\alpha^i}(u_\varepsilon,\D u_\varepsilon)-\D_k u_\varepsilon^iW_{u_\varepsilon^i}(u_\varepsilon,\D u_\varepsilon)\right)\,dx\nonumber\\
	=&-\int_{\R^3}\D_\alpha v^k_\varepsilon\D_ku_\varepsilon^iW_{p_\alpha^i}(u_\varepsilon,\D u_\varepsilon)\,dx.\nonumber
	\end{align}
	Plugging \eqref{2.5} into \eqref{2.4} gives
	\begin{align}\label{2.7}
	&\frac{d}{dt}\int_{\R^3}\left(W(u_\varepsilon,\D u_\varepsilon)+\frac{1}{4\varepsilon^2}(1-|u_\varepsilon|^2)^2\right)\,dx+\frac{1}{\g_1}\int_{\R^3}|h_\varepsilon|^2\,dx\\
	=&-\int_{\R^3}\D_\alpha v^k_\varepsilon\D_ku_\varepsilon^iW_{p_\alpha^i}(u_\varepsilon,\D u_\varepsilon)\,dx-\int_{\R^3}h_\varepsilon^T\Omega_\varepsilon u_\varepsilon \,dx+\frac{\g_2}{\g_1}\int_{\R^3}h_\varepsilon^TA_\varepsilon u_\varepsilon \,dx.\nonumber
	\end{align}
	Therefore, summing \eqref{2.4.0} with \eqref{2.7} yields \eqref{basic energy}.
	\end{proof}

The following lemma gives the local energy-dissipation law of the Ginzburg-Landau system \eqref{G-L1}-\eqref{G-L3}.
\begin{lemma}\label{local estimate}
	Let $(v_\varepsilon, u_\varepsilon)$ be a strong solution to the system \eqref{G-L1}-\eqref{G-L3} in $\R^3\times(0, T_\varepsilon)$. Assume that $\frac{1}{2}\leq|u_\varepsilon|\leq\frac{3}{2}$ in $\R^3\times(0, T_\varepsilon)$.  Then for any $\phi\in C_0^\infty(\R^3)$ and $s\in(0,T_\varepsilon)$, we obtain
	\begin{align}\label{local energy estimates}
	&\int_{\R^3}\left(|v_\varepsilon(x,s)|^2+|\D u_\varepsilon(x,s)|^2+\frac{(1-|u_\varepsilon(x,s)|^2)^2}{\varepsilon^2}\right)\phi^2\,dx\\
	&+\int_{0}^{s}\int_{\R^3}\left(|\D v_\varepsilon|^2+|\D^2u_\varepsilon|^2+|\p_t u_\varepsilon|^2+\frac{|\D(|u_\varepsilon|^2)|^2}{\varepsilon^2}\right)\phi^2\,dxdt\nonumber\\
	\leq& C\int_{\R^3}\left(|v_{0,\varepsilon}|^2+|\nabla u_{0,\varepsilon}|^2+\frac{(1-|u_{0,\varepsilon}|^2)^2}{\varepsilon^2}\right)\phi^2\,dx\nonumber\\
	&+C\int_{0}^{s}\int_{\R^3}(|P_\varepsilon-c_\varepsilon(t)|+|v_\varepsilon|^2)|v_\varepsilon||\D\phi|\phi \,dxdt\nonumber\\
	&+C\int_{0}^{s}\int_{\R^3}(|v_\varepsilon|^2+|\D u_\varepsilon|^2)|\D u_\varepsilon|^2\phi^2\,dxdt\nonumber\\
	&+C\int_{0}^{s}\int_{\R^3}(|\D u_\varepsilon|^2+|v_\varepsilon|^2)|\D\phi|^2\,dxdt,\nonumber
	\end{align}
	where $C$ is a positive constant independent of $\varepsilon$ and $c_\varepsilon(t)\in \R$. In particular, for any $s\in(0,T_\varepsilon)$, we have
	\begin{align}\label{fir-ord}
	&\int_{\R^3}\left(|v_\varepsilon(x,s)|^2+|\D u_\varepsilon(x,s)|^2+\frac{(1-|u_\varepsilon(x,s)|^2)^2}{\varepsilon^2}\right)\,dx\\
	&+ \int_{0}^{s}\int_{\R^3}\left(|\D v_\varepsilon|^2+|\D^2u_\varepsilon|^2+ |\p_t u_\varepsilon|^2+\frac{1}{\varepsilon^2}|\na(|u_\varepsilon|)^2|^2\right)\,dxdt\nonumber
	\\
	\leq& C\int_{\R^3}|v_{0,\varepsilon}|^2+|\nabla u_{0,\varepsilon}|^2+\frac{(1-|u_{0,\varepsilon}|^2)^2}{\varepsilon^2}\,dx+C\int_{0}^{s}\int_{\R^3}(|v_\varepsilon|^2+|\D u_\varepsilon|^2)|\D u_\varepsilon|^2\,dxdt.\nonumber
	\end{align}
	
\end{lemma}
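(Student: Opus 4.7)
The plan is to mimic the proof of Lemma \ref{energy-identity}, inserting the cutoff $\phi^2$ into every test function, and then upgrade the resulting localized identity to \eqref{local energy estimates} by quasilinear elliptic arguments together with the nondegeneracy assumption $1/2 \leq |u_\varepsilon| \leq 3/2$.

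First I would test \eqref{G-L1} against $v_\varepsilon\phi^2$. Because $\operatorname{div} v_\varepsilon=0$, any constant $c_\varepsilon(t)$ may be freely subtracted from $P_\varepsilon$ when integrating by parts, so the convective and pressure contributions collect into the commutator $\int (|P_\varepsilon-c_\varepsilon(t)|+|v_\varepsilon|^2)|v_\varepsilon||\nabla\phi|\phi$. The stress contribution splits exactly as in \eqref{2.3}--\eqref{2.4.0}, producing an additional cutoff tail of the form $\int(|v_\varepsilon|^2+|\nabla u_\varepsilon|^2)|\nabla\phi|^2$ from differentiating $\phi^2$. Next I would test \eqref{G-L3} against $\frac{1}{\gamma_1}h_\varepsilon\phi^2$ and repeat \eqref{2.4}--\eqref{2.7}; the crucial identity
\[
-\int\partial_t u_\varepsilon\cdot h_\varepsilon\,\phi^2\,dx = \tfrac{d}{dt}\!\int\!\bigl[W(u_\varepsilon,\nabla u_\varepsilon)+\tfrac{1}{4\varepsilon^2}(1-|u_\varepsilon|^2)^2\bigr]\phi^2\,dx
\]
still holds (modulo a $W_p\!\cdot\!\partial_t u_\varepsilon\!\cdot\!\nabla\phi^2$ commutator), and the cross-terms $\int h_\varepsilon^T\Omega_\varepsilon u_\varepsilon\phi^2$ and $\tfrac{\gamma_2}{\gamma_1}\int h_\varepsilon^T A_\varepsilon u_\varepsilon\phi^2$ cancel against the ones arising from the momentum test by \eqref{ag2}. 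Summing yields a localized analogue of \eqref{basic energy}, putting $\int_0^s\!\int(|A_\varepsilon|^2+|h_\varepsilon|^2)\phi^2$ on the left.

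The third and hardest step is to upgrade this to control of the higher-order quantities in \eqref{local energy estimates}. The bound on $|A_\varepsilon|^2\phi^2$ combined with $\operatorname{div} v_\varepsilon=0$ yields $|\nabla v_\varepsilon|^2\phi^2$ up to a $|\nabla\phi|^2|v_\varepsilon|^2$ commutator. The quasilinear operator $\nabla\!\cdot\!W_p-W_u$ is strongly elliptic by \eqref{Co2}, so localized $L^2$-ellipticity converts $\int|h_\varepsilon|^2\phi^2$ into $\int|\nabla^2 u_\varepsilon|^2\phi^2$ modulo lower-order terms and the penalty $\frac{u_\varepsilon(1-|u_\varepsilon|^2)}{\varepsilon^2}$. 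To extract $\int\frac{|\nabla(|u_\varepsilon|^2)|^2}{\varepsilon^2}\phi^2$ I would expand
\[
|h_\varepsilon|^2 = |\nabla\!\cdot\!W_p-W_u|^2 + 2(\nabla\!\cdot\!W_p-W_u)\!\cdot\!\tfrac{u_\varepsilon(1-|u_\varepsilon|^2)}{\varepsilon^2} + \tfrac{|u_\varepsilon|^2(1-|u_\varepsilon|^2)^2}{\varepsilon^4},
\]
integrate by parts on the cross term and substitute
\[
\nabla[u_\varepsilon(1-|u_\varepsilon|^2)] = (\nabla u_\varepsilon)(1-|u_\varepsilon|^2) - u_\varepsilon\otimes\nabla|u_\varepsilon|^2;
\]
the quadratic homogeneity of $W$ in $p$ combined with \eqref{Co2} and $|u_\varepsilon|\geq 1/2$ then produces the required dissipation with the correct sign, the remaining error being controllable by Young's inequality with a parameter small enough not to disturb the elliptic coercivity. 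The $|\partial_t u_\varepsilon|^2\phi^2$ bound is read off pointwise from $\gamma_1\partial_t u_\varepsilon = -\gamma_1(v_\varepsilon\!\cdot\!\nabla)u_\varepsilon + \gamma_1\Omega_\varepsilon u_\varepsilon - \gamma_2 A_\varepsilon u_\varepsilon + h_\varepsilon$ by squaring and using what is already controlled. Finally, \eqref{fir-ord} follows by choosing $\phi=\phi_R$ with $\phi_R\equiv 1$ on $B_R$ and $|\nabla\phi_R|\lesssim 1/R$, and sending $R\to\infty$, at which point the pressure and cutoff commutator terms vanish.

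The main obstacle is the sign and coefficient bookkeeping in the last step: the cross term between $\nabla\!\cdot\!W_p$ and the Ginzburg-Landau penalty must, after one integration by parts, contribute $+\frac{|\nabla(|u_\varepsilon|^2)|^2}{\varepsilon^2}$ on the left with a positive constant. This identification relies essentially on the quadratic structure of the Oseen-Frank density in $\nabla u$, the ellipticity constant $a$ of \eqref{Co2}, and the lower bound $|u_\varepsilon|\geq 1/2$; without all three the dissipation $\int\frac{|\nabla(|u_\varepsilon|^2)|^2}{\varepsilon^2}\phi^2$ could not be separated from the penalty $\int\frac{|u_\varepsilon|^2(1-|u_\varepsilon|^2)^2}{\varepsilon^4}\phi^2$, which is of size $\varepsilon^{-2}$ and otherwise uncontrolled.
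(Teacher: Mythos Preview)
Your first two steps (testing \eqref{G-L1} against $v_\varepsilon\phi^2$ and \eqref{G-L3} against $\tfrac{1}{\gamma_1}h_\varepsilon\phi^2$, cancelling the $h_\varepsilon^T\Omega_\varepsilon u_\varepsilon$ and $h_\varepsilon^T A_\varepsilon u_\varepsilon$ terms) are exactly what the paper does and are correct. The divergence is in your third step, and there is a genuine gap.

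Your plan is to extract $\int|\nabla^2 u_\varepsilon|^2\phi^2$ and $\int\tfrac{|\nabla(|u_\varepsilon|^2)|^2}{\varepsilon^2}\phi^2$ from $\int|h_\varepsilon|^2\phi^2$ alone, by expanding the square and integrating the cross term by parts. After the integration by parts you describe, the piece you need to be a good dissipation is
\[
2\int_{\R^3} W_{p_\alpha^i}(u_\varepsilon,\nabla u_\varepsilon)\,u_\varepsilon^i\,\frac{\nabla_\alpha|u_\varepsilon|^2}{\varepsilon^2}\,\phi^2\,dx.
\]
For the Dirichlet energy this equals $\int\tfrac{|\nabla|u_\varepsilon|^2|^2}{\varepsilon^2}\phi^2$, but for a general Oseen--Frank density $W_{p_\alpha^i}u_\varepsilon^i$ has no reason to be proportional to $\nabla_\alpha|u_\varepsilon|^2$; quadratic homogeneity in $p$ and the ellipticity bound \eqref{Co2} do not force this. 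One can only estimate $|W_{p_\alpha^i}u_\varepsilon^i|\leq C|\nabla u_\varepsilon|$, which leaves a term of order $\varepsilon^{-2}\int|\nabla u_\varepsilon|^2\phi^2$ that is \emph{not} among the admissible errors on the right of \eqref{local energy estimates}. The obstacle you flag at the end is therefore not merely bookkeeping---it is fatal for general $W$.

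The paper sidesteps this by \emph{not} trying to squeeze everything out of $\int|h_\varepsilon|^2\phi^2$. Instead it tests \eqref{G-L3} against two further multipliers, $\partial_t u_\varepsilon\phi^2$ and $\tfrac{1}{\gamma_1}\Delta u_\varepsilon\phi^2$. The latter produces $\int|\nabla^2 u_\varepsilon|^2\phi^2$ via the ellipticity \eqref{Co2} and, through the elementary identity $u_\varepsilon\cdot\Delta u_\varepsilon=\tfrac12\Delta|u_\varepsilon|^2-|\nabla u_\varepsilon|^2$, delivers $\tfrac{1}{2\varepsilon^2}\int|\nabla|u_\varepsilon|^2|^2\phi^2$ with the correct sign \emph{independently of the structure of $W$}. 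The residual factor $\tfrac{1-|u_\varepsilon|^2}{\varepsilon^2}$ that remains is then removed by the pointwise estimate
\[
\Bigl|\tfrac{1-|u_\varepsilon|^2}{\varepsilon^2}\Bigr|\leq C\bigl(|\partial_t u_\varepsilon|+|v_\varepsilon||\nabla u_\varepsilon|+|\nabla v_\varepsilon|+|\nabla u_\varepsilon|^2+|\nabla^2 u_\varepsilon|\bigr),
\]
obtained by dotting \eqref{G-L3} with $u_\varepsilon$ and using $|u_\varepsilon|\geq\tfrac12$. This pointwise substitution---not the expansion of $|h_\varepsilon|^2$---is the mechanism that eliminates the dangerous $\varepsilon^{-2}$ factors, and it is the key idea your proposal is missing.
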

\begin{proof}
	Multiplying \eqref{G-L1} by $v^i_\varepsilon\phi^2$, integrating over $\R^3$ and using the similar calculations in \eqref{2.2} and \eqref{2.3} yield
	\begin{align*}\alabel{2.48}
	&\frac{d}{dt}\int_{\R^3}\frac{|v_\varepsilon|^2}{2}\phi^2\,dx+\int_{\R^3}\alpha_1|u_\varepsilon^T A_\varepsilon u_\varepsilon|^2\phi^2+\alpha_4|A_\varepsilon|^2\phi^2+\beta|A_\varepsilon u_\varepsilon|^2\phi^2\,dx
	\\
	=&\int_{\R^3}h_\varepsilon^T\Omega_\varepsilon u_\varepsilon \phi^2\,dx-\frac{\g_2}{\g_1}\int_{\R^3}h_\varepsilon^T A_\varepsilon u_\varepsilon \phi^2\,dx+2\int_{\R^3}(P_\varepsilon-c_\varepsilon(t)) v_\varepsilon\cdot \D \phi\phi \,dx
	\\
	&+2\int_{\R^3}|v_\varepsilon|^2v_\varepsilon\cdot\nabla\phi \phi \,dx\nonumber-2\int_{\R^3}\sigma_\varepsilon^L:v_\varepsilon\otimes \D\phi \phi \,dx+\int_{\R^3}\D_i u_\varepsilon^kW_{p^k_j}\D_jv_\varepsilon^i\phi^2\,dx
	\\
	&+2\int_{\R^3}\D_i u_\varepsilon^k W_{p^k_j}v^i_\varepsilon\D_j\phi\phi \,dx.\nonumber
	\end{align*}
	Multiplying \eqref{G-L3} by $\frac{1}{\g_1}h_\varepsilon\phi^2$, integrating over $\R^3$ and using the similar calculations in \eqref{2.5}, one has
	\begin{align}\label{2.49}
	&\frac{d}{dt}\int_{\R^3}\left(W(u_\varepsilon,\nabla u_\varepsilon)+\frac{1}{4\varepsilon^2}(1-|u_\varepsilon|^2)^2\right)\phi^2\,dx+\frac{1}{\g_1}\int_{\R^3}|h_\varepsilon|^2\phi^2\,dx\\
	=&-\int_{\R^3}h_\varepsilon^T\Omega_\varepsilon u_\varepsilon\phi^2\,dx+\frac{\g_2}{\g_1}\int_{\R^3}h_\varepsilon^T A_\varepsilon u_\varepsilon\phi^2\,dx+\int_{\R^3}v_\varepsilon\cdot \D u_\varepsilon\cdot h_\varepsilon\phi^2\,dx\nonumber\\
	&-2\int_{\R^3}\p_tu^i_\varepsilon W_{p^i_\alpha}\D_\alpha\phi\phi \,dx.\nonumber
	\end{align}
Summing \eqref{2.49} with \eqref{2.48} and using Young's inequality yield
	\begin{align}\label{2.50}
	&\frac{d}{dt}\int_{\R^3}\left(\frac{|v_\varepsilon|^2}{2}+W(u_\varepsilon,\D u_\varepsilon)+\frac{1}{4\varepsilon^2}(1-|u_\varepsilon|^2)^2\right)\phi^2\,dx\\
	&+\frac{\alpha_4}{2}\int_{\R^3}|\D v_\varepsilon|^2\phi^2\,dx+\alpha_1\int_{\R^3}|u_\varepsilon^T A_\varepsilon u_\varepsilon|^2\phi^2\,dx+\beta\int_{\R^3}|A_\varepsilon u_\varepsilon|^2\phi^2\,dx\nonumber\\
	\leq& C\int_{\R^3}(|\D u_\varepsilon|^2|\D v_\varepsilon|+|v_\varepsilon||\D u_\varepsilon||h_\varepsilon|)\phi^2\,dx+C\int_{\R^3}|\p_t u_\varepsilon||\D u_\varepsilon||\D \phi||\phi|\,dx\nonumber\\
	&+C\int_{\R^3}(|\sigma_\varepsilon^L|+|\D u_\varepsilon|^2+|\D v_\varepsilon|+|P_\varepsilon-c_\varepsilon(t)|)|v_\varepsilon||\D\phi||\phi|\,dx\nonumber\\
	\leq& \eta\int_{\R^3}|\p_t u_\varepsilon|^2\phi^2\,dx+\frac{\alpha_4}{4}\int_{\R^3}|\D v_\varepsilon|^2\phi^2\,dx+C\int_{\R^3}(|\D u_\varepsilon|^2+|v_\varepsilon|^2)|\D\phi|^2 \,dx\nonumber\\
	&+C\int_{\R^3}(|\D u_\varepsilon|^2+|v_\varepsilon|^2)|\D u_\varepsilon|^2\phi^2\,dx+C\int_{\R^3}(|P_\varepsilon-c_\varepsilon(t)|+|v_\varepsilon|^2)|v_\varepsilon||\D\phi||\phi|dx,\nonumber
	\end{align}
	where $\eta$ will be chosen later and we have used the facts that
	\begin{align*}
	|\sigma^L_\varepsilon|+|h_\varepsilon|\leq C(|A_\varepsilon|+|\Omega_\varepsilon|+|N_\varepsilon|)\leq C(|\D v_\varepsilon|+|\p_t u_\varepsilon|+|v_\varepsilon||\D u_\varepsilon|)
	\end{align*}
	and
	\begin{align*}
	\alpha_4\int_{\R^3}|A_\varepsilon|^2\phi^2\;dx\geq\frac{\alpha_4}{2}\int_{\R^3}|\D v_\varepsilon|^2\phi^2\,dx-C\int_{\R^3}|v_\varepsilon||\D v_\varepsilon||\D\phi||\phi|\,dx
	\end{align*}
	which follows from integration by parts and using \eqref{G-L2}.
	
	In order to bound the term $\int_{\R^3}|\p_t u_\varepsilon|^2\phi^2\;dx$ on the right hand side of \eqref{2.50}, we multiply \eqref{G-L3} by $\partial_tu_\varepsilon^i\phi^2$ and then integrate it over $\R^3$ to obtain
	\begin{align*}
	&-\int_{\R^3}h_\varepsilon\cdot\partial_tu_\varepsilon\phi^2\,dx+\g_1\int_{\R^3}|\partial_tu_\varepsilon|^2\phi^2\,dx\nonumber\\
	=&-\g_1\int_{\R^3}v_\varepsilon\cdot\nabla u_\varepsilon^i\partial_t u_\varepsilon^i\phi^2\,dx+\int_{\R^3}(\g_1\Omega_\varepsilon u_\varepsilon-\g_2 A_\varepsilon u_\varepsilon)\cdot\p_t u_\varepsilon \phi^2 \,dx.\\
	\end{align*}
	It follows from   similar calculations in \eqref{2.5} that
	\begin{align}\label{2.52}
	&\frac{d}{dt}\int_{\R^3}\left(W(u_\varepsilon,\nabla u_\varepsilon)+\frac{1}{4\varepsilon^2}(1-|u_\varepsilon|^2)^2\right)\phi^2\,dx+\g_1\int_{\R^3}|\partial_tu_\varepsilon|^2\phi^2\,dx\\
	=&-\g_1\int_{\R^3}v_\varepsilon\cdot\nabla u_\varepsilon^i\partial_t u_\varepsilon^i\phi^2\,dx-2\int_{\R^3}\p_tu^iW_{p^i_\alpha}\D_\alpha\phi\phi \,dx\nonumber\\
	& +\int_{\R^3}(\g_1\Omega_\varepsilon u_\varepsilon-\g_2 A_\varepsilon u_\varepsilon)\cdot\p_t u_\varepsilon \phi^2 \,dx\nonumber\\
	\leq& \frac{\g_1}{4}\int_{\R^3}|\partial_tu_\varepsilon|^2\phi^2\;dx+C_1\int_{\R^3}|\D v_\varepsilon|^2\phi^2\,dx\nonumber\\
	&+C\int_{\R^3}|v_\varepsilon|^2|\D u_\varepsilon|^2\phi^2+|\D u_\varepsilon|^2|\D\phi|^2\,dx.\nonumber
	\end{align}
	
	To derive the estimate of $\int_{\R^3}|\nabla^2u_\varepsilon|^2\phi^2\;dx$, we multiply \eqref{G-L3} with $\frac{1}{\g_1}\Delta u^i_\varepsilon\phi^2$ and integrate the resulting equation over $\R^3$. Then, one has
	\begin{align}\label{2.53}
	&-\int_{\R^3}\p_t u_\varepsilon\cdot \Delta u_\varepsilon \phi^2\,dx+\frac{1}{\g_1}\int_{\R^3}h_\varepsilon\cdot \Delta u_\varepsilon\phi^2\,dx\\
	=&\int_{\R^3}v_\varepsilon\cdot\D u_\varepsilon\Delta u_\varepsilon\phi^2\,dx+\int_{\R^3}\left(-\Omega_\varepsilon u_\varepsilon+\frac{\g_2}{\g_1}A_\varepsilon u_\varepsilon\right)\cdot\Delta u_\varepsilon\phi^2\,dx.\nonumber
	\end{align}
	It follows from integration by parts that
	\begin{align}\label{2.54}
	&\int_{\R^3}h_\varepsilon\cdot \Delta u_\varepsilon\phi^2\,dx=\int_{\R^3}\left(\D_\alpha W_{p_\alpha^i}-W_{u_\varepsilon^i}+\frac{1}{\varepsilon^2}u_\varepsilon^i(1-|u_\varepsilon|^2)\right)\Delta u_\varepsilon^i\phi^2\,dx\\
	=&\int_{\R^3} W_{p_\alpha^ip_\gamma^j}\D_{\beta\alpha}u_\varepsilon^i\D_{\beta\gamma}u_\varepsilon^j\phi^2\,dx+\int_{\R^3} W_{p_\alpha^iu^j}\nabla_\beta u_\varepsilon^j\D_{\beta\alpha}u_\varepsilon^i\phi^2\,dx\nonumber\\
   &+2\int_{\R^3}W_{p_\alpha^i}(\D_{\beta\alpha}u_\varepsilon^i\D_\beta\phi-\Delta u_\varepsilon^i\D_\alpha\phi)\phi \,dx\nonumber\\
	&-\int_{\R^3}W_{u_\varepsilon^i}\Delta u_\varepsilon^i\phi^2\;dx+\frac{1}{2\varepsilon^2}\int_{\R^3}|\D_\beta(|u_\varepsilon|^2)|^2\phi^2\,dx\nonumber\\
	&-\int_{\R^3}\frac{(1-|u_\varepsilon|^2)}{\varepsilon^2}(|\D u_\varepsilon|^2\phi^2+\D(|u_\varepsilon|^2)\D\phi\phi)\,dx \nonumber
	\end{align}
	and
	\begin{align}\label{2.55}
	-\int_{\R^3}\p_t u_\varepsilon\cdot \Delta u_\varepsilon \phi^2\,dx=\frac{1}{2}\frac{d}{dt}\int_{\R^3}|\D u_\varepsilon|^2\phi^2\,dx+2\int_{\R^3}\p_t u_\varepsilon^i\cdot\D u_\varepsilon^i\cdot\D\phi\phi \,dx.
	\end{align}
	Collecting \eqref{2.53}-\eqref{2.55} and using \eqref{Co2} give
	\begin{align}\label{2.56}
	&\frac{1}{2}\frac{d}{dt}\int_{\R^3}|\nabla u_\varepsilon|^2\phi^2 \,dx+\int_{\R^3}\left(\frac{a}{\g_1}|\D^2u_\varepsilon|^2+\frac{1}{2\varepsilon^2}|\nabla|u_\varepsilon|^2|^2\right)\phi^2\,dx\\
	\leq &\int_{\R^3}v_\varepsilon\cdot\D u_\varepsilon\Delta u_\varepsilon\phi^2\,dx+\int_{\R^3}\left(-\Omega_\varepsilon u_\varepsilon+\frac{\g_2}{\g_1}A_\varepsilon u_\varepsilon\right)\cdot\Delta u_\varepsilon\phi^2\,dx\nonumber\\
	&-2\int_{\mathbb{R}^3}\partial_tu_\varepsilon\nabla u_\varepsilon^i\phi\nabla\phi \,dx-\frac{1}{\g_1}\int_{\R^3}W_{p_\alpha^iu^j}(u_\varepsilon,\nabla u_\varepsilon)\nabla_\beta u_\varepsilon^j\nabla_{\beta\alpha}u_\varepsilon^i\phi^2\,dx\nonumber\\
	&-\frac{2}{\g_1}\int_{\R^3}W_{p_\alpha^i}(u,\nabla u)(\nabla_{\beta\alpha}^2u_\varepsilon^i\nabla_\beta\phi-\Delta u_\varepsilon^i\nabla\phi)\phi \,dx+\frac{1}{\g_1}\int_{\R^3}W_{u_\varepsilon^i}\Delta u_\varepsilon^i\phi^2\,dx\nonumber\\
	&+\frac{1}{\g_1}\int_{\R^3}\frac{(1-|u_\varepsilon|^2)}{\varepsilon^2}(|\D u_\varepsilon|^2\phi^2+\D(|u_\varepsilon|^2)\D\phi\phi)\,dx\nonumber\\
	\leq& \frac{a}{2\g_1}\int_{\R^3}|\D^2 u_\varepsilon|^2\phi^2\,dx+\eta\int_{\R^3}|\p_t u_\varepsilon|^2\phi^2\,dx+C_2\int_{\R^3}|\D v_\varepsilon|^2\,dx\nonumber\\
	&+C\int_{\R^3}(|v_\varepsilon|^2+|\D u_\varepsilon|^2)|\D u_\varepsilon|^2\phi^2\,dx+C\int_{\R^3}|\D u_\varepsilon|^2|\D\phi|^2\,dx,\nonumber
	\end{align}
	where we have used
	\begin{align}\label{estimate1}
	\left|\frac{1-|u_\varepsilon|^2}{\varepsilon^2}\right|\leq C(|\p_t u_\varepsilon|+|v_\varepsilon||\D u_\varepsilon|+|\D v_\varepsilon|+|\D u_\varepsilon|^2+|\D^2 u_\varepsilon|),
	\end{align}
	which follows from \eqref{G-L3} and the assumption $\frac{1}{2}\leq|u_\varepsilon|\leq\frac{3}{2}$.
	
	Multiplying \eqref{2.50} by $C_3:=4\alpha_4^{-1}(C_1+C_2+1)$, summing with \eqref{2.52}, \eqref{2.56}, choosing $\eta=\g_1(4(C_3+1))^{-1}$, and integrating in $[0,s]$, we have \eqref{local energy estimates} following from \eqref{Co2}. By using the same argument with $\phi\equiv 1$, we obtain the desired estimate \eqref{fir-ord}.
\end{proof}

Second order estimates of the Ginzburg-Landau system \eqref{G-L1}-\eqref{G-L3} are given in the following lemma.
\begin{lemma}\label{second order estimates}
	Let $(u_\varepsilon,v_\varepsilon)$ be a strong solution to the system \eqref{G-L1}-\eqref{G-L3} on $\R^3\times (0,T_\varepsilon)$. Assume that $\frac 12\leq |u_\varepsilon|\leq \frac 32$ on $\R^3\times (0,T_\varepsilon)$. Then for any $\phi\in C^\infty_0(\R^3)$ and any $s\in (0,T_\varepsilon)$ we have the estimate
	\begin{align*}
	&\int_{\R^3}\left(|\na v_\varepsilon(x,s)|^2+|\na^2 u_\varepsilon(x,s)|^2+\frac{|\na(|u_\varepsilon(x,s)|^2)|^2}{\varepsilon^2}\right)\phi^2\,dx\alabel{second order estimates.1}
	\\
	&+\int_{0}^{s}\int_{\R^3}\left(|\na^3u_\varepsilon|^2+|\na^2v_\varepsilon|^2+|\na \p_t u_\varepsilon|^2+\frac{|\nabla^2(|u_\varepsilon|^2)|^2}{\varepsilon^2}\right) \phi^2\,dxdt
	\\
	\leq& C\int_{\R^3}\left(|\na^2 u_{0,\varepsilon}|^2+|\na v_{0,\varepsilon}|^2+\frac{|\na|u_{0,\varepsilon}|^2|^2}{\varepsilon^2}\right)\phi^2\,dx
	\\
	&+C\int_{0}^{s}\int_{\R^3}(|\na u_\varepsilon|^2+|v_\varepsilon|^2)(|\na^2 u_\varepsilon|^2+|\na v_\varepsilon|^2+|\p_t u_\varepsilon|^2)\phi^2\,dxdt
	\\
	&+C\int_{0}^{s}\int_{\R^3}(|\na u_\varepsilon|^4+|v_\varepsilon|^4+|\na^2 u_\varepsilon|^2+|\na v_\varepsilon|^2+|\p_t u_\varepsilon|^2)(|\na \phi|^2+|\na^2\phi||\phi|)\,dxdt
	\\
	&+C\int_{0}^{s}\int_{\R^3}|P_\varepsilon-c_\varepsilon(t)|^2(|\na\phi|^2+|\na^2\phi||\phi|)\,dxdt,
	\end{align*}
	where $C$ is a positive constant independent of $\varepsilon$ and $c_\varepsilon(t)\in\R$. In particular, for any $s\in(0,T_\varepsilon)$, we have
	\begin{align*}
	&\int_{\R^3}\left(|\na^2 u_\varepsilon(x,s)|^2+|\na v_\varepsilon(x,s)|^2+\frac{|\na(|u_\varepsilon(x,s)|^2)|^2}{\varepsilon^2}\right)\,dx\alabel{second order estimates.2}\\
	&+\int_{0}^{s}\int_{\R^3}\left(|\na^3u_\varepsilon|^2+|\na^2v_\varepsilon|^2+|\na \p_t u_\varepsilon|^2+\frac{|\nabla^2|u_\varepsilon|^2|^2}{\varepsilon^2}\right) \,dxdt
	\\
	\leq& C\int_{\R^3}\left(|\na^2 u_{0,\varepsilon}|^2+|\na v_{0,\varepsilon}|^2+\frac{|\na|u_{0,\varepsilon}|^2|^2}{\varepsilon^2}\right)\,dx
	\\
	&+C\int_{0}^{s}\int_{\R^3}(|v_\varepsilon|^2+|\na u_\varepsilon|^2)(|\na v_\varepsilon|^2+|\na^2 u_\varepsilon|^2+|\p_t u_\varepsilon|^2)\,dxdt.
	\end{align*}
\end{lemma}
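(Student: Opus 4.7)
The plan is to carry out the argument of Lemma \ref{local estimate} at one higher order of differentiation. I will test the velocity equation \eqref{G-L1} against $-\Delta v_\varepsilon\,\phi^2$, and the director equation \eqref{G-L3} against both $-\Delta\partial_t u_\varepsilon\,\phi^2$ and $\Delta^2 u_\varepsilon\,\phi^2$. Each of these three identities produces a time derivative of one of the pointwise quantities on the left of \eqref{second order estimates.1} together with a coercive dissipation term; combining them with a sufficiently large multiplicative constant on the velocity identity (to absorb the $|\nabla^2 v_\varepsilon|^2$ that reappears in the two director estimates) and a sufficiently small Young parameter $\eta$ on the $|\nabla\partial_t u_\varepsilon|^2$ cross terms, then integrating over $[0,s]$, yields \eqref{second order estimates.1}. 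Setting $\phi\equiv 1$ collapses the cut-off boundary terms and gives \eqref{second order estimates.2}.

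For the velocity test, integration by parts using $\nabla\cdot v_\varepsilon=0$ converts the $\alpha_4$-dissipation into $\tfrac{\alpha_4}{2}\int|\nabla^2 v_\varepsilon|^2\phi^2$ modulo lower-order cut-off errors; the pressure term, after integrating by parts twice against $-\Delta v_\varepsilon\,\phi^2$, produces exactly the local $|P_\varepsilon-c_\varepsilon(t)|^2(|\nabla\phi|^2+|\nabla^2\phi||\phi|)$ contribution on the right hand side; the convective term and the Ericksen–Leslie stresses are bounded, after the pointwise controls $|\sigma^L_\varepsilon|+|h_\varepsilon|\leq C(|\nabla v_\varepsilon|+|\partial_t u_\varepsilon|+|v_\varepsilon||\nabla u_\varepsilon|)$ used in Lemma \ref{local estimate}, by the quartic and quadratic quantities listed in the claim.

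For the director, testing \eqref{G-L3} against $-\Delta\partial_t u_\varepsilon\,\phi^2$ and invoking \eqref{Co2} on the elliptic part of $h_\varepsilon$ yields a time derivative of $\int|\nabla^2 u_\varepsilon|^2\phi^2$ together with $\gamma_1\int|\nabla\partial_t u_\varepsilon|^2\phi^2$, while the Ginzburg–Landau penalty, rewritten via the identity $u_\varepsilon^i\nabla\partial_t u_\varepsilon^i=\tfrac12\partial_t\nabla(|u_\varepsilon|^2)-\nabla u_\varepsilon^i\,\partial_t u_\varepsilon^i$, supplies the $\tfrac{d}{dt}\int\frac{|\nabla(|u_\varepsilon|^2)|^2}{\varepsilon^2}\phi^2$ term required on the left. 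Next, testing \eqref{G-L3} against $\Delta^2 u_\varepsilon\,\phi^2$ and integrating by parts twice, the ellipticity \eqref{Co2} extracts $a\int|\nabla^3 u_\varepsilon|^2\phi^2$ from the quadratic form $W_{p^i_\alpha p^j_\gamma}\nabla_\beta\nabla_\alpha u^i_\varepsilon\nabla_\beta\nabla_\gamma u^j_\varepsilon$, while a parallel rearrangement of the penalty produces $\tfrac{1}{2\varepsilon^2}\int|\nabla^2(|u_\varepsilon|^2)|^2\phi^2$. The coupling contributions from $v_\varepsilon\cdot\nabla u_\varepsilon$, $\Omega_\varepsilon u_\varepsilon$ and $A_\varepsilon u_\varepsilon$ tested against $\nabla\Delta u_\varepsilon$ are then controlled by Young's inequality and deliver exactly the cubic and quartic lower-order quantities on the right hand side.

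The principal technical difficulty is the singular factor $\tfrac{1-|u_\varepsilon|^2}{\varepsilon^2}$ and its spatial derivative, both of which arise when one integrates the $\Delta^2 u_\varepsilon$ test by parts. As in \eqref{estimate1}, I would use \eqref{G-L3} itself to express $\tfrac{1-|u_\varepsilon|^2}{\varepsilon^2}$ pointwise in terms of $|\partial_t u_\varepsilon|$, $|v_\varepsilon||\nabla u_\varepsilon|$, $|\nabla v_\varepsilon|$, $|\nabla u_\varepsilon|^2$, $|\nabla^2 u_\varepsilon|$; differentiating \eqref{G-L3} one further time similarly controls $\nabla\!\left(\tfrac{1-|u_\varepsilon|^2}{\varepsilon^2}\right)$ pointwise by $|\nabla\partial_t u_\varepsilon|$, $|\nabla^2 v_\varepsilon|$, $|\nabla^3 u_\varepsilon|$ plus quadratic lower-order products, each of which is absorbable into the coercive left hand side under the large-constant/small-$\eta$ scheme. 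The remainder of the argument is a book-keeping exercise paralleling \eqref{2.50}--\eqref{2.56}, so I omit further detail.
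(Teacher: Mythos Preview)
Your outline captures most of the structure correctly, but it omits one essential ingredient that the paper uses and that your three tests cannot replace: the paper \emph{also} differentiates \eqref{G-L3} in $x_\beta$ and multiplies by $\frac{1}{\gamma_1}\nabla_\beta h_\varepsilon\,\phi^2$. This fourth identity (your proposal has only the velocity test, the $-\Delta\partial_t u_\varepsilon$ test and the $\Delta^2 u_\varepsilon$ test) is what makes the Leslie-stress contribution close.

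The issue is this. When you process $I_1:=\int_{\R^3}\sigma^L_\varepsilon:\nabla\Delta v_\varepsilon\,\phi^2\,dx$ in the velocity equation, the algebra of \eqref{2.3} at one higher order extracts the good signed pieces $-\alpha_4\int|\nabla A_\varepsilon|^2\phi^2$, $-\alpha_1\int|u^T\nabla A_\varepsilon u|^2\phi^2$, $-\beta\int|\nabla A_\varepsilon u|^2\phi^2$, but it \emph{leaves behind} the two cross terms
\[
\frac{\gamma_2}{\gamma_1}\int_{\R^3} h_\varepsilon^T\Delta A_\varepsilon u_\varepsilon\,\phi^2\,dx
\quad\text{and}\quad
-\int_{\R^3} h_\varepsilon^T\Delta\Omega_\varepsilon u_\varepsilon\,\phi^2\,dx.
\]
If you attempt to bound these pointwise using $|h_\varepsilon|\leq C(|\partial_t u_\varepsilon|+|v_\varepsilon||\nabla u_\varepsilon|+|\nabla v_\varepsilon|)$ and Young's inequality, you produce bulk contributions $C\int|\partial_t u_\varepsilon|^2\phi^2$ and $C\int|\nabla v_\varepsilon|^2\phi^2$ on the right-hand side that are \emph{not} among the terms in the claimed estimate \eqref{second order estimates.1}; alternatively, if you try to push the derivatives onto $\sigma^L_\varepsilon$ and bound $|\nabla\sigma^L_\varepsilon|$, you generate $\int|\nabla\partial_t u_\varepsilon||\nabla^2 v_\varepsilon|\phi^2$ with a coefficient fixed by $\alpha_2,\alpha_3$, and then the resulting circular coupling between $|\nabla^2 v_\varepsilon|^2$ (velocity) and $|\nabla\partial_t u_\varepsilon|^2$ (director) cannot be broken by any choice of the large constant $K$ and small $\eta$ for general Leslie coefficients.

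The paper avoids this by computing $J_1,J_2$ from the $\nabla_\beta h_\varepsilon$ test: integration by parts in \eqref{2.21}--\eqref{2.22} reproduces \emph{exactly} $-\frac{\gamma_2}{\gamma_1}\int h_\varepsilon^T\Delta A_\varepsilon u_\varepsilon\phi^2$ and $+\int h_\varepsilon^T\Delta\Omega_\varepsilon u_\varepsilon\phi^2$, and these cancel the leftover cross terms from the velocity identity when the two are summed. This cancellation is the same energy-law structure (inherited from the Parodi relation \eqref{ag2}) that drives Lemma~\ref{energy-identity}, lifted to second order. Your ``large-constant/small-$\eta$ scheme'' is correct for everything else, but it presupposes that the cross terms have already been cancelled; add the $\nabla h_\varepsilon$ test and the rest of your plan goes through.
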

\begin{proof}
	For simplicity, denote
	\begin{align*}
	g_1=:&(|v_\varepsilon|^2+|\na u_\varepsilon|^2)(|\na v_\varepsilon|^2+|\na^2 u_\varepsilon|^2+|\p_t u_\varepsilon|^2+|\nabla u_\varepsilon|^4)\phi^2,
	\\
	g_2=:&(|\na u_\varepsilon|^4+|v_\varepsilon|^4+|\p_t u_\varepsilon|^2+|\na^2 u_\varepsilon|^2+|\na v_\varepsilon|^2)(|\na \phi|^2+|\na^2\phi||\phi|).
	\end{align*}
Multiplying equation \eqref{G-L1} by $\Delta v_\varepsilon\phi^2$, using \eqref{G-L2} and integrating over $\R^3$ yield
\begin{align*}
\frac 12\frac{d}{dt}\int_{\R^3}|\na v_\varepsilon|^2\phi^2\,dx
=&\int_{\R^3}\sigma_\varepsilon^L:\na\Delta v_\varepsilon\phi^2\,dx-2\int_{\R^3}\p_t v_\varepsilon\na v_\varepsilon\cdot\na \phi\phi\,dx
\\
&-2\int_{\R^3}(P_\varepsilon-c_\varepsilon(t))\Delta v_\varepsilon\na \phi\phi)\,dx\alabel{2.14}
\\
&+\int_{\R^3}\Delta v_\varepsilon\cdot(v_\varepsilon\cdot\na v_\varepsilon\phi^2+2\sigma_\varepsilon^L\cdot\na\phi\phi-\na\cdot\sigma_\varepsilon^E\phi^2)\,dx
\\
=&:I_1+I_2+I_3+I_4.
\end{align*}
By a similar argument to the one in \eqref{2.3}, one has
\begin{align*}
I_1=&\alpha_1\int_{\R^3}  (u_\varepsilon^TA_\varepsilon u_\varepsilon)u_\varepsilon\otimes u_\varepsilon:\Delta A_\varepsilon\phi^2\,dx
+\alpha_4\int_{\R^3}A_\varepsilon:\Delta A_\varepsilon\phi^2\,dx
\\
&+\beta\int_{\R^3}A_\varepsilon u_\varepsilon\otimes u_\varepsilon:\Delta A_\varepsilon \phi^2\,dx+\frac{\g_2}{\g_1}\int_{\R^3}h_\varepsilon^T\Delta A_\varepsilon u_\varepsilon\phi^2 \,dx-\int_{\R^3}h_\varepsilon^T\Delta \Omega_\varepsilon u_\varepsilon\phi^2\,dx\\
=&-\alpha_1\int_{\R^3}|u_\varepsilon^iu_\varepsilon^j\nabla A_{\varepsilon ij}|^2\phi^2\,dx-\alpha_4\int_{\R^3}|\nabla A_\varepsilon|^2\phi^2\,dx-\beta\int_{\R^3}|\nabla A_{\varepsilon ij}u_\varepsilon^j|^2\phi^2\,dx\\
&-\alpha_1\int_{\R^3}A_{\varepsilon kl}\nabla A_{\varepsilon ij}\cdot\nabla(u_\varepsilon^iu_\varepsilon^ju_\varepsilon^ku_\varepsilon^l\phi^2)\,dx-2\alpha_4\int_{\R^3}A_{\varepsilon ij}\nabla A_{\varepsilon ij}\cdot \nabla\phi\phi\,dx\\
&-\beta\int_{\R^3}A_{\varepsilon ik}\nabla(u_\varepsilon^ku_\varepsilon^i\phi^2)\nabla A_{\varepsilon ij}\,dx+\frac{\g_2}{\g_1}\int_{\R^3}h_\varepsilon^T\Delta A_\varepsilon u_\varepsilon\phi^2 \,dx-\int_{\R^3}h_\varepsilon^T\Delta \Omega_\varepsilon u_\varepsilon\phi^2\,dx\\
\leq&-\frac{\alpha_4}{2}\int_{\R^3}|\nabla^2 v_\varepsilon|^2\phi^2\,dx-\alpha_1\int_{\R^3}|u_\varepsilon^iu_\varepsilon^j\nabla A_{\varepsilon ij}|^2\phi^2\,dx-\beta\int_{\R^3}|\nabla A_{\varepsilon ij}u_\varepsilon^j|^2\phi^2\,dx\\
&+\frac{\g_2}{\g_1}\int_{\R^3}h_\varepsilon^T\Delta A_\varepsilon u_\varepsilon\phi^2\,dx-\int_{\R^3}h_\varepsilon^T\Delta \Omega_\varepsilon u_\varepsilon\phi^2\,dx+C\int_{\R^3}g_1+g_2\,dx.
\end{align*}
In the view of \eqref{G-L1}, one has
\begin{align*}
I_2=&-2\int_{\R^3}(\na \cdot(\sigma_\varepsilon^E+\sigma_\varepsilon^L)-(v_\varepsilon\cdot\na)v_\varepsilon-\na P_\varepsilon)\na v_\varepsilon\cdot\na \phi\phi\,dx
\\
\leq &C\int_{\R^3}(|\na^2u_\varepsilon||\na u_\varepsilon||\na v_\varepsilon|+|\na u_\varepsilon|^3|\na v_\varepsilon|)|\na\phi||\phi|\,dx-2\int_{\R^3}\na \cdot\sigma_\varepsilon^L\na v_\varepsilon\cdot\na \phi\phi\,dx
\\
&+C\int_{\R^3}|v_\varepsilon||\na v_\varepsilon|^2|\na \phi||\phi|\,dx+C\int_{\R^3}|P_\varepsilon-c_\varepsilon(t)||\na v_\varepsilon||\na(\na\phi\phi)|\,dx
\\
\leq&\frac{\alpha_4}{16}\int_{\R^3}|\na^2 v_\varepsilon|^2\phi^2\,dx+\eta_1\int_{\R^3}|\na \p_t u_\varepsilon|^2\phi^2\,dx
\\
&+C\int_{\R^3}|P_\varepsilon-c_\varepsilon(t)|^2(|\na^2\phi||\phi|+|\na \phi|^2)\,dx+C\int_{\R^3}g_1+g_2\,dx,
\end{align*}
where $\eta_1$ will be chosen later and we utilized the fact that
\begin{equation}
|\na\cdot \sigma^L_\varepsilon|\leq C(|\na u_\varepsilon||\na v_\varepsilon|+|\na^2 v_\varepsilon|+|\na \p_t u_\varepsilon|+|\p_t u_\varepsilon||\na u_\varepsilon|+|v_\varepsilon||\na^2u_\varepsilon|).
\end{equation}
It follows from Young's inequality that
\begin{align*}
I_3+I_4\leq&\frac{\alpha_4}{16} \int_{\R^3}|\na^2 v_\varepsilon|^2\phi^2\,dx+C \int_{\R^3}|P_\varepsilon-c_\varepsilon(t)|^2|\na \phi|^2\,dx+C \int_{\R^3}g_1+g_2\,dx.
\end{align*}
Substituting $I_1,I_2,I_3$ and $I_4$ into \eqref{2.14} leads us to
\begin{align*}
& \frac{1}{2}\frac{d}{dt}\int_{\R^3}|\na v_\varepsilon|^2\phi^2\,dx+\frac{3\alpha_4}{8}\int_{\R^3}|\na^2 v_\varepsilon|^2\phi^2\,dx\alabel{2.19}
\\
\leq& \eta_1\int_{\R^3}|\na \p_t u_\varepsilon|^2\phi^2\,dx+C\int_{\R^3}|P_\varepsilon-c_\varepsilon(t)|^2(|\na^2\phi||\phi|+|\na \phi|^2)\,dx
\\
&+\frac{\g_2}{\g_1}\int_{\R^3}h_\varepsilon^T\Delta A_\varepsilon u_\varepsilon\phi^2\,dx-\int_{\R^3}h_\varepsilon^T\Delta \Omega_\varepsilon u_\varepsilon\phi^2\, dx+C\int_{\R^3}g_1+g_2\,dx.
\end{align*}
Differentiating \eqref{G-L3} in $x_\beta$, multiplying the resulting equation by $\frac{1}{\g_1}\D_\beta h_\varepsilon\phi^2$ and integrating over $\R^3$, we have
\begin{align*}\alabel{2.20}
&-\int_{\R^3}\p_t \na_\beta  u_\varepsilon\cdot \na_\beta h_\varepsilon\phi^2\,dx+\frac{1}{\gamma_1}\int_{\R^3} |\na h_\varepsilon|^2\phi^2\,dx
\\
=&\frac{\g_2}{\g_1}\int_{\R^3}\D_\beta(A_\varepsilon u_\varepsilon)\cdot\D_\beta h_\varepsilon\phi^2 \,dx-\int_{\R^3} \D_\beta(\Omega_\varepsilon u_\varepsilon)\cdot\D_\beta h_\varepsilon\phi^2 \,dx\\
&+\int_{\R^3}\D_\beta(v_\varepsilon\cdot\D u_\varepsilon)\cdot\D_\beta h_\varepsilon\phi^2\,dx=:J_1+J_2+J_3.
\end{align*}
For $J_1$, it follows from integration by parts that
\begin{align*}\alabel{2.21}
J_1=&-\frac{\g_2}{\g_1}\int_{\R^3}h_\varepsilon^T\Delta A_\varepsilon u_\varepsilon\phi^2 \,dx-\frac{\g_2}{\g_1}\int_{\R^3}\D_{\beta}A_\varepsilon\D_{\beta}(u_\varepsilon\phi^2)  h_\varepsilon  \,dx\\
&+\frac{\g_2}{\g_1}\int_{\R^3}A_{\varepsilon} \D_{\beta} u_\varepsilon\cdot \D_{\beta}h_\varepsilon \phi^2\,dx\\
\leq&-\frac{\g_2}{\g_1}\int_{\R^3}h_\varepsilon^T\Delta A_\varepsilon u_\varepsilon\phi^2 \,dx+\frac{\alpha_4}{16}\int_{\R^3}|\D^2 v_\varepsilon|^2\phi^2\,dx
\\
&+\frac{1}{8\g_1}\int_{\R^3}|\D h_\varepsilon|^2\phi^2\,dx
+C\int_{\R^3}g_1+g_2\,dx,
\end{align*}
where we have used that $|h_\varepsilon|\leq C(|\p_t u_\varepsilon|+|v_\varepsilon||\na u_\varepsilon|+|\na v_\varepsilon|)$. Similarly, we have
\begin{align}\label{2.22}
J_2\leq&\int_{\R^3}h_\varepsilon^T\Delta \Omega_\varepsilon u_\varepsilon\phi^2\,dx+\frac{\alpha_4}{16}\int_{\R^3}|\D^2 v_\varepsilon|^2\phi^2\,dx
\\
&+\frac{1}{4\g_1}\int_{\R^3}|\D h_\varepsilon|^2\phi^2\,dx+C\int_{\R^3}g_1+g_2\,dx.\nonumber
\end{align}
It follows from Young's inequality that
\begin{align}\label{2.23}
J_3\leq\frac{1}{8\g_1}\int_{\R^3}|\D h_\varepsilon|^2\phi^2\,dx+C\int_{\R^3}g_1+g_2\,dx.
\end{align}
Next we estimate $J_0:=-\int_{\R^3}\p_t\D_\beta u_\varepsilon\cdot\D_\beta h_\varepsilon \phi^2\;dx$ on the left hand side of \eqref{2.20}. By using \eqref{M-F1}, one has
\begin{align*}\alabel{2.24}
J_0=&-\int_{\R^3}  \na_\beta \p_t u_\varepsilon^i\na^2_{\alpha\beta} W_{p^i_\alpha}\phi^2\,dx+\int_{\R^3}\na_\beta \p_t u_\varepsilon^i\na_\beta W_{u_\varepsilon^i}\phi^2\,dx
\\
&-\int_{\R^3}\na_\beta \p_t u_\varepsilon^i\na_\beta\left(\frac{u_\varepsilon^i(1-|u_\varepsilon|^2)}{\varepsilon^2}\right)\phi^2\,dx=:J_{0,1}+J_{0,2}+J_{0,3}.
\end{align*}
It follows from integration by parts and \eqref{Co2} that
\begin{align*}
J_{0,1}=&\int_{\R^3}\p_t\D_{\alpha\beta}^2u_\varepsilon^iW_{p_\alpha^ip_\g^j}\D_{\beta\g}^2u_\varepsilon^j\phi^2\,dx+\int_{\R^3}\p_t\D_{\alpha\beta}^2u_\varepsilon^iW_{p_\alpha^iu_\varepsilon^j}\D_{\beta}u_\varepsilon^j\phi^2\,dx\alabel{2.25}
\\
&+2\int_{\R^3}\p_t\D_{\beta}u_\varepsilon^i\na_\beta W_{p_\alpha^i}\na_\alpha\phi\phi\,dx
\\
=&\frac{1}{2}\frac{d}{dt}\int_{\R^3}\D_{\alpha\beta}^2u_\varepsilon^iW_{p_\alpha^ip_\g^j}\D_{\beta\g}^2u_\varepsilon^j\phi^2\,dx+\frac12\int_{\R^3}\D_{\beta}u_\varepsilon^i\D_{\alpha}(\p_tW_{p_\alpha^ip_\g^j}\D_{\beta\g}^2u_\varepsilon^j\phi^2)\,dx\nonumber
\\
&-\int_{\R^3}\p_t\D_{\beta}u_\varepsilon^i\D_{\alpha}\left(W_{p_\alpha^iu_\varepsilon^j}\D_{\beta}u_\varepsilon^j\phi^2\right)\,dx+2\int_{\R^3}\p_t\D_{\beta}u_\varepsilon^i\na_\beta W_{p_\alpha^i}\na_\alpha\phi\phi\,dx
\\
\geq& \frac{1}{2}\frac{d}{dt}\int_{\R^3}\D_{\alpha\beta}^2u_\varepsilon^iW_{p_\alpha^ip_\g^j}\D_{\beta\g}^2u_\varepsilon^j\phi^2\,dx-{\eta_1}\int_{\R^3}|\p_t\D u_\varepsilon|^2\phi^2\,dx
\\
&-{\eta}_2\int_{\R^3}|\D^3u_\varepsilon|^2\phi^2\,dx-C\int_{\R^3}g_1+g_2\,dx.
\end{align*}
For $J_{0,2}$, Young's inequality implies
\begin{align}\label{2.26}
J_{0,2}\geq-{\eta_1}\int_{\R^3}|\p_t\D u_\varepsilon|^2\phi^2\,dx-C\int_{\R^3}g_1+g_2\,dx.
\end{align}
For $J_{0,3}$, noting that \eqref{estimate1} and the inequality
\begin{align}\label{estimate2}
\left|\frac{\nabla (1-|u_\varepsilon|^2)}{\varepsilon^2}\right|\leq &C(|\nabla\p_tu_\varepsilon|+|\nabla^2v_\varepsilon|+|\nabla ^3u_\varepsilon|)+C|v_\varepsilon|(|\nabla u_\varepsilon|^2+|\nabla^2u_\varepsilon|)\\
&+C|\nabla u_\varepsilon|(|\nabla u_\varepsilon|^2+|\nabla^2u_\varepsilon|+|\p_t u_\varepsilon|+|\nabla v_\varepsilon|)\nonumber
\end{align}
from \eqref{G-L3} and the assumption $\frac{1}{2}\leq|u_\varepsilon|\leq\frac{3}{2}$,
 one has
\begin{align*}
J_{0,3}=&\int_{\R^3}\frac{\D_{\beta}(|u_\varepsilon|^2)}{\varepsilon^2}\p_t(u_\varepsilon^i\D_{\beta}u_\varepsilon^i)\phi^2\,dx+\int_{\R^3}\frac{\D_\beta(1-|u_\varepsilon|^2)}{\varepsilon^2}\p_t u_\varepsilon^i\D_\beta u_\varepsilon^i\phi^2\,dx\alabel{2.27}
\\
&-\int_{\R^3}\frac{1-|u_\varepsilon|^2}{\varepsilon^2}\D_{\beta}u_\varepsilon^i\p_t\D_{\beta}u_\varepsilon^i\phi^2\,dx\\
\geq&\frac{1}{4\varepsilon^2}\frac{d}{dt}\int_{\R^3}|\D(|u_\varepsilon|^2)|^2\phi^2\,dx-\frac{\alpha_4}{16}\int_{\R^3}|\nabla^2 v_\varepsilon|\phi^2\,dx\\
&-\eta_1\int_{\R^3}|\p_t\D u_\varepsilon|^2\phi^2\,dx-{\eta}_2\int_{\R^3}|\D^3u_\varepsilon|^2\phi^2\,dx-C\int_{\R^3}g_1+g_2\,dx.\nonumber
\end{align*}
Plugging \eqref{2.25}, \eqref{2.26} and \eqref{2.27} into \eqref{2.24}, one has
\begin{align}\label{2.31}
J_0\geq&\frac{d}{dt}\int_{\R^3}\left(\frac{1}{2}\D_{\alpha\beta}^2u_\varepsilon^iW_{p_\alpha^ip_\g^j}\D_{\beta\g}^2u_\varepsilon^j+\frac{|\D(|u_\varepsilon|^2)|^2}{4\varepsilon^2}\right)\phi^2\,dx\\
&-\int_{\R^3}\left(3{\eta_1}|\p_t\D u_\varepsilon|^2+2{\eta}_2|\D^3u_\varepsilon|^2\right)\phi^2\,dx-C\int_{\R^3}g_1+g_2\,dx.\nonumber
\end{align}
Substituting \eqref{2.21}, \eqref{2.22}, \eqref{2.23}, \eqref{2.31} into \eqref{2.20}, we have
\begin{align*}
&\frac{d}{dt}\int_{\R^3}\left(\frac12\D_{\alpha\beta}^2u_\varepsilon^iW_{p_\alpha^ip_\g^j}\D_{\beta\g}^2u_\varepsilon^j+\frac{1}{4\varepsilon^2}|\D|u_\varepsilon|^2|^2\right)\phi^2\,dx+\frac{1}{2\g_1}\int_{\R^3}|\D h_\varepsilon|^2\phi^2 \,dx\\
\leq&\frac{3\alpha_4}{16}\int_{\R^3}|\D^2 v_\varepsilon|^2\phi^2 \,dx+3\eta_1\int_{\R^3}|\D\p_t u_\varepsilon|^2\phi^2 \,dx+2{\eta}_2\int_{\R^3}|\D^3 u_\varepsilon|^2 \phi^2\,dx\nonumber\\
&-\frac{\g_2}{\g_1}\int_{\R^3}h_\varepsilon^T\Delta A_\varepsilon u_\varepsilon\phi^2\,dx+\int_{\R^3}h_\varepsilon^T\Delta \Omega_\varepsilon u_\varepsilon\phi^2\,dx+C\int_{\R^3}g_1+g_2\,dx.
\end{align*}
which, summing with \eqref{2.19}, integrating over $[0,s]$ and then using \eqref{Co2}, yields
\begin{align*}\alabel{2.33}
&\int_{\R^3}\left(\frac{1}{2}|\D v_\varepsilon(x,s)|^2+\frac{a}{2}|\D^2 u_\varepsilon(x,s)|^2
+\frac{1}{4\varepsilon^2}|\D(|u_\varepsilon(x,s)|^2)|^2\right)\phi^2\,dx\\
&+\int_{0}^{s}\int_{\R^3}\left(\frac{\alpha_4}{8}|\D^2v_\varepsilon|^2 +\alpha_1|u^i_\varepsilon\D A_{\varepsilon ij}u^j_\varepsilon|^2+\beta|\D A_{\varepsilon ij}u^j_\varepsilon|^2\right)\phi^2\,dxdt\nonumber\\
\leq &4\eta_1\int_{0}^{s}\int_{\R^3}|\D\p_t u_\varepsilon|^2\phi^2 \,dxdt+2{\eta}_2\int_{0}^{s}\int_{\R^3}|\D^3 u_\varepsilon|^2\phi^2 \,dxdt+C\int_{0}^{s}\int_{\R^3}g_1+g_2\,dxdt\nonumber\\
&+C\int_{\R^3}\left(|\D v_{0,\varepsilon}|^2+|\D^2 u_{0,\varepsilon}|^2
+\frac{|\D(|u_{0,\varepsilon}|^2)|^2}{\varepsilon^2}\right)\phi^2\,dx.\nonumber
\end{align*}
In view of \eqref{2.33}, it remains to estimate $|\D\p_t u_\varepsilon|^2$ and $|\D^3 u_\varepsilon|^2.$
Differentiating \eqref{G-L3} in $x_\beta$, multiplying the resulting equation by $\nabla_\beta\partial_tu_\varepsilon\phi^2$ and integrating over $\mathbb R^3$, we have
\begin{align}\label{2.34}
&-\int_{\R^3}\D_\beta h_\varepsilon\cdot\D_\beta \p_t u_\varepsilon\phi^2 \,dx+\g_1\int_{\R^3}|\D\p_t u_\varepsilon|^2\phi^2\,dx\\
=&\int_{\R^3}\left(\D_\beta(\g_1\Omega_\varepsilon u_\varepsilon-\g_2A_\varepsilon u_\varepsilon)-\g_1\D_\beta(v_\varepsilon\cdot \D u_\varepsilon)\right)\cdot \D_\beta\p_t u_\varepsilon \phi^2\,dx\nonumber\\
\leq& \eta_2\int_{\R^3}|\D\p_t u_\varepsilon|^2\phi^2 \,dx+{C}_1\int_{\R^3}|\D^2 v_\varepsilon|^2 \phi^2\,dx+C\int_{\R^3}g_1+g_2\,dx.\nonumber
\end{align}
Plugging \eqref{2.31} into \eqref{2.34} yields
\begin{align}\label{2.35}
&\frac{d}{dt}\int_{\R^3}\left(\frac12\D_{\alpha\beta}^2u_\varepsilon^iW_{p_\alpha^ip_\g^j}\D_{\beta\g}^2u_\varepsilon^j+\frac{|\D(|u_\varepsilon|^2)|^2}{4\varepsilon^2}\right)\phi^2\,dx+{\g_1}\int_{\R^3}|\D\p_t u_\varepsilon|^2\phi^2\,dx
\\
\leq\int_{\R^3}(3&\eta_1|\D\p_t u_\varepsilon|^2+3\eta_2|\D^3u_\varepsilon|^2)\phi^2\,dx+{C}_1\int_{\R^3}|\D^2 v_\varepsilon|^2 \phi^2\,dx+C\int_{\R^3}g_1+g_2\,dx.\nonumber
\end{align}
On the other hand,
taking a derivative $\D_{\beta}$ of \eqref{G-L3},  multiplying the resulting equation by $\frac{1}{\g_1}\D_\beta\Delta u_\varepsilon\phi^2$ and integrating over $\R^3$,  we have
\begin{align}\label{2.36}
&\frac{1}{2}\frac{d}{dt}\int_{\R^3}|\D^2u_\varepsilon|^2\phi^2\,dx+\frac{1}{\g_1}\int_{\R^3}\D_{\beta}h_\varepsilon\cdot\D_{\beta}\Delta u_\varepsilon\phi^2 \,dx\\
=&\int_{\R^3}\left(\D_{\beta}(v_\varepsilon\cdot\D u_\varepsilon-\Omega_\varepsilon u_\varepsilon)+\frac{\g_2}{\g_1}\D_{\beta}(A_\varepsilon u_\varepsilon)\right)\cdot\D_\beta\Delta u_\varepsilon\phi^2 \,dx\nonumber\\
\leq&\frac{a}{8\g_1}\int_{\R^3}|\D^3u_\varepsilon|^2\phi^2\,dx+{C}_2\int_{\R^3}|\D^2 v_\varepsilon|^2\phi^2 \,dx+C\int_{\R^3}g_1+g_2\,dx.\nonumber
\end{align}
For the term $K_0:=\int_{\R^3}\D_{\beta}h_\varepsilon\cdot\D_{\beta}\Delta u_\varepsilon\phi^2 \;dx$,  it follows from \eqref{M-F1} that
\begin{align}\label{2.37}
K_0=&\int_{\R^3}\D_{\beta\alpha}^2W_{p_\alpha^i}\D_{\beta}\Delta u_\varepsilon^i \phi^2\,dx-\int_{\R^3}\D_{\beta}W_{u_\varepsilon^i}\D_{\beta}\Delta u_\varepsilon^i\phi^2 \,dx\\
&+\int_{\R^3}\D_{\beta}\left(\frac{1-|u_\varepsilon|^2}{\varepsilon^2}u_\varepsilon^i\right)\D_{\beta}\Delta u_\varepsilon^i\phi^2\,dx=:K_{0,1}+K_{0,2}+K_{0,3}.\nonumber
\end{align}
Note that
\begin{align*}
\nabla^2_{\g \b}W_{p_{\a}^i} (u_\varepsilon,\nabla u_\varepsilon)
&=W_{p_l^jp_{\a}^i}(u_\varepsilon,\nabla u_\varepsilon) \nabla^3_{\b \g l}
u^j_\varepsilon+W_{u_\varepsilon^jp_{\a}^i}(u_\varepsilon,\nabla u_\varepsilon) \nabla^2_{\g\b}
u^j_\varepsilon\nonumber\\
&+W_{u_\varepsilon^kp_l^jp_{\a}^i}(u_\varepsilon,\nabla u_\varepsilon)\nabla_\g u^k_\varepsilon\nabla_{\b l}^2
u^j_\varepsilon+W_{u_\varepsilon^ku_\varepsilon^j p_{\a}^i}(u_\varepsilon,\nabla u_\varepsilon)\nabla_\g u^k_\varepsilon\nabla_{\b}
u^j_\varepsilon\\
&+W_{p_l^ku_\varepsilon^jp_{\a}^i}(u_\varepsilon,\nabla u_\varepsilon) \nabla_{\g l}^2 u^k_\varepsilon\nabla_\beta u^j_\varepsilon.
\end{align*}
Using integration by parts twice and using \eqref{Co2} yield
\begin{align}\label{2.38}
K_{0,1}\geq&\int_{\R^3}\D_{\beta\g}^2W_{p_\alpha^i}\D_{\alpha\beta\g}^3u_\varepsilon^i\phi^2\,dx-C\int_{\R^3}|\na W_{p}||\na^3u_\varepsilon||\na\phi||\phi|\,dx\\
\geq& \int_{\R^3}W_{p_l^jp_{\a}^i}(u_\varepsilon,\nabla u_\varepsilon) \nabla^3_{\b \g l}
u^j_\varepsilon \D_{\alpha\beta\g}^3u_\varepsilon^i\phi^2\,dx\nonumber
\\
&-\frac{a}{4}\int_{\R^3}|\D^3u_\varepsilon|^2\phi^2\,dx-C\int_{\R^3}g_1+g_2\,dx\nonumber\\
\geq& \frac{3a}{4}\int_{\R^3}|\D^3u_\varepsilon|^2\phi^2\,dx-C\int_{\R^3}g_1+g_2\,dx.\nonumber
\end{align}
For $K_{0,2}$, it follows from Young's inequality that
\begin{align}\label{2.39}
K_{0,2}\geq -\frac{a}{8}\int_{\R^3}|\D^3u_\varepsilon|^2\phi^2\,dx-C\int_{\R^3}(|\D u_\varepsilon|^2|\D^2u_\varepsilon|^2+|\D u_\varepsilon|^6)\phi^2\,dx.
\end{align}
The term $K_{0,3}$ can be controlled as follows. Since
\begin{align*}
u_\varepsilon\cdot\nabla_\beta\Delta u_\varepsilon=\frac{1}{2}\nabla_\beta\Delta(|u_\varepsilon|^2)-\nabla_\beta(|\nabla u_\varepsilon|^2)-\nabla_\beta u_\varepsilon\cdot \Delta u_\varepsilon,
\end{align*}
we obtain from integration by parts that
\begin{align*}
K_{0,3}=&\int_{\mathbb R^3}\nabla_\beta\left(\frac{1-|u_\varepsilon|^2}{\varepsilon^2}\right)u_\varepsilon^i\nabla_\beta\Delta u_\varepsilon^i \phi^2\,dx+\int_{\mathbb R^3}\frac{1-|u_\varepsilon|^2}{\varepsilon^2}\nabla_\beta u_\varepsilon^i\na_\beta\Delta u_\varepsilon^i\phi^2\,dx
\\
=&\int_{\mathbb R^3}\nabla_\beta\left(\frac{1-|u_\varepsilon|^2}{\varepsilon^2}\right)\left(\frac{1}{2}\nabla_\beta\Delta(|u_\varepsilon|^2)-\nabla_\beta(|\nabla u_\varepsilon|^2)-\nabla_\beta u_\varepsilon\cdot \Delta u_\varepsilon\right)\phi^2\,dx
\\
&+\int_{\mathbb R^3}\frac{1-|u_\varepsilon|^2}{\varepsilon^2}\nabla_\beta u_\varepsilon^i\na_\beta\Delta u_\varepsilon^i\phi^2\,dx\\
=&\int_{\mathbb R^3}\frac{|\nabla^2(|u_\varepsilon|^2)|^2}{2\varepsilon^2}\phi^2 \,dx-\int_{\R^3}\nabla_\beta\left(\frac{1-|u_\varepsilon|^2}{\varepsilon^2}\right)\nabla_{\beta\alpha}^2(|u_\varepsilon|^2)\nabla_\alpha\phi\phi\,dx\\
&-\int_{\mathbb R^3}\nabla_\beta\left(\frac{1-|u_\varepsilon|^2}{\varepsilon^2}\right)\left(\nabla_\beta(|\nabla u_\varepsilon|^2)+\nabla_\beta u_\varepsilon\cdot \Delta u_\varepsilon\right)\phi^2\,dx\\
&+\int_{\mathbb R^3}\frac{1-|u_\varepsilon|^2}{\varepsilon^2}\nabla_\beta u_\varepsilon^i\na_\beta\Delta u_\varepsilon^i\phi^2\,dx
\end{align*}
Then, by using \eqref{estimate1}, \eqref{estimate2} and Young's inequality,  it is clear that
\begin{align}\label{2.42}
K_{0,3}\geq&\int_{\mathbb R^3}\frac{|\nabla^2(|u_\varepsilon|^2)|^2}{2\varepsilon^2} \phi^2\,dx-\frac{a}{8}\int_{\R^3}|\D^3u_\varepsilon|^2\phi^2\,dx-\eta_1\int_{\R^3}|\nabla\p_tu_\varepsilon|^2\phi^2\,dx\\
&- C_2\int_{\R^3}|\nabla^2v_\varepsilon|^2\phi^2\,dx-C\int_{\R^3}g_1+g_2\,dx.\nonumber
\end{align}
Substituting \eqref{2.38}-\eqref{2.39} and \eqref{2.42} into \eqref{2.37}, we have
\begin{align}\label{2.43}
K_0\geq& \frac{a}{2}\int_{\R^3}|\D^3u_\varepsilon|^2\phi^2\,dx+\int_{\mathbb R^3}\frac{|\nabla^2(|u_\varepsilon|^2)|^2}{2\varepsilon^2} \phi^2\,dx\\
&-\eta_1\int_{\R^3}|\nabla\p_tu_\varepsilon|^2\phi^2\,dx- C_2\int_{\R^3}|\nabla^2v_\varepsilon|^2\phi^2\,dx-C\int_{\R^3}g_1+g_2\,dx.\nonumber
\end{align}
Collecting \eqref{2.43} with \eqref{2.36}, one has
\begin{align*}\alabel{2.44}
&\frac{1}{2}\frac{d}{dt}\int_{\R^3}|\D^2u_\varepsilon|^2\phi^2\,dx+\frac{3a}{8\g_1}\int_{\R^3}|\D^3u_\varepsilon|^2\phi^2\,dx+\frac{1}{2\g_1}\int_{\mathbb R^3}\frac{|\nabla^2(|u_\varepsilon|^2)|^2}{\varepsilon^2}\phi^2 \,dx\\
\leq&\eta_1\int_{\R^3}|\na \p_t u_\varepsilon|^2\phi^2\,dx+2{C}_2\int_{\R^3}|\D^2 v_\varepsilon|^2\phi^2 \,dx+C\int_{\R^3}g_1+g_2\,dx.\nonumber
\end{align*}
Summing \eqref{2.44} with \eqref{2.35}, integrating over $[0,s]$ and using \eqref{Co2} yield
\begin{align*}\alabel{2.45}
&\int_{\R^3}\left(\frac{1+a}{2}|\D^2u_\varepsilon(x,s)|^2+\frac{|\D(|u_\varepsilon(x,s)|^2)|^2}{4\varepsilon^2}
\right)\phi^2\,dx
\\
&+\int_{0}^{s}\int_{\R^3}\left(\g_1|\D\p_t u_\varepsilon|^2 +\frac{3a}{8\g_1}|\D^3u_\varepsilon|^2+\frac{1}{2\g_1}\frac{|\nabla^2(|u_\varepsilon|^2)|^2}{\varepsilon^2}\right) \phi^2\,dxdt\nonumber
\\
\leq&C\int_{\R^3}\left(|\D^2 u_{0,\varepsilon}|^2
+\frac{|\D(|u_{0,\varepsilon}|^2)|^2}{\varepsilon^2}\right)\phi^2\,dx\nonumber
\\
&+4\eta_1\int_{0}^{s}\int_{\R^3}|\na\p_tu_\varepsilon|^2\phi^2\,dxdt+3\eta_2\int_{0}^{s}\int_{\R^3}|\na^3 u_\varepsilon|^2\phi^2\,dxdt
\\
&+({C}_1+2{C}_2)\int_{0}^{s}\int_{\R^3}|\D^2v_\varepsilon|^2\phi^2\,dxdt+C\int_{0}^{s}\int_{\R^3}g_1+g_2\,dxdt.
\end{align*}
Multiplying \eqref{2.33} by $ C_3=8\alpha_4^{-1}({C}_1+2 C_2)$, summing with \eqref{2.45}, and then choosing small constants $\eta_1=\g_1(8( C_3+1))^{-1}$ and ${\eta}_2=a(8\g_1(2 C_3+3))^{-1}$, we obtain
\begin{align}\label{2.46}
&\int_{\R^3}(|\D v_\varepsilon(x,s)|^2+|\D^2 u_\varepsilon(x,s)|^2+\frac{1}{\varepsilon^2}|\D(|u_\varepsilon(x,s)|^2)|^2)\phi^2\,dx\\
&+\int_{0}^{s}\int_{\R^3}\left(|\D^2v_\varepsilon|^2+|\p_t\D u_\varepsilon|^2+|\D^3u_\varepsilon|^2+\frac{1}{\varepsilon^2}|\nabla^2|u_\varepsilon|^2|^2\right) \phi^2\,dxdt\nonumber
\\
\leq C&\int_{\R^3}(|\D v_{0,\varepsilon}|^2+|\D^2 u_{0,\varepsilon}|^2+\frac{1}{\varepsilon^2}|\D(|u_{0,\varepsilon}|^2)|^2)\phi^2\,dx+C\int_{0}^{s}\int_{\R^3}g_1+g_2\,dxdt.\nonumber
\end{align}
Note that integration by parts and Young's inequality yield
\begin{align*}
&\int_{\R^3}(|v_\varepsilon|^2+|\nabla u_\varepsilon|^2)|\na u_\varepsilon|^4\phi^2\,dx\alabel{FHM L6}\\
=&-\int_{\R^3}(u_\varepsilon^j-b)\na_i(\na_iu_\varepsilon^j|\na u_\varepsilon|^4\phi^2+\na_iu_\varepsilon^j|v_\varepsilon|^2|\nabla u_\varepsilon|^2\phi^2)\,dx
\\
\leq&C\int_{\R^3}(|\na^2 u_\varepsilon||\na u_\varepsilon|^4+|v_\varepsilon||\nabla v_\varepsilon||\nabla u_\varepsilon|^3+|v_\varepsilon|^2|\nabla u_\varepsilon|^2|\nabla^2 u_\varepsilon|)\phi^2\,dx\\
&+C\int_{\R^3}(|\na u_\varepsilon|^5|+|v_\varepsilon|^2|\nabla u_\varepsilon|^3)|\na \phi||\phi|\,dx
\\
\leq&\frac 12\int_{\R^3}(|v_\varepsilon|^2+|\nabla u_\varepsilon|^2)|\na u_\varepsilon|^4\phi^2\,dx+C\int_{\R^3}|\nabla u_\varepsilon|^2(|\nabla^2 u_\varepsilon|^2+|\nabla v_\varepsilon|^2)\phi^2\,dx\\
&+C\int_{\R^3}|v_\varepsilon|^2|\nabla^2u_\varepsilon|^2\phi^2\;dx+C\int_{\R^3}(|v_\varepsilon|^2+|\nabla u_\varepsilon|^2)|\nabla u_\varepsilon|^2|\nabla \phi|^2\,dx.
\end{align*}
Therefore,  this lemma follows from \eqref{2.46} and \eqref{FHM L6}.
\end{proof}

The following lemma gives a local estimate of pressure under a smallness assumption, see \cites{HW, HM} for similar arguments.
\begin{lemma}\label{pressure estimate}
	Let $(u_\varepsilon, v_\varepsilon)$ be a strong solution to \eqref{G-L1}-\eqref{G-L3} in $\R^3\times(0, T_\varepsilon)$ and $\phi$ be a cut-off function satisfying $0\leq \phi\leq 1$, $\text{supp } \phi\subset B_{2R}(x_0)$ for some $x_0\in \R^3$ and $|\na \phi|\leq\frac{C}{R}$. For any $s\in (0, T_\varepsilon)$, assume that $\frac 12\leq |u_\varepsilon|\leq \frac32$ and
		\begin{equation}\label{pressure estimate 1}
		\sup_{0\leq t\leq s,x_0\in\R^3}\int_{B_R(x_0)}|\na u_\varepsilon(x,t)|^3+|v_\varepsilon(x,t)|^3dx\leq \delta^3.
		\end{equation}
		Then for any $t\in(0,T_\varepsilon)$, there exists a $c_\varepsilon(t)\in \R$ such that the pressure $P_\varepsilon$ satisfies the following estimate
	\begin{align*}
	&\int_{0}^{s}\int_{\R^3}|P_\varepsilon-c_\varepsilon(t)|^2\phi^2\,dxdt\alabel{pressure estimate.1}
	\\
	\leq&C\sup_{x_0\in\R^3}\int_{0}^{s}\int_{B_{R}(x_0)}\frac{\delta^2}{R^2}(|\na u_\varepsilon|^2+|v_\varepsilon|^2)+\delta^2(|\na^2 u_\varepsilon|^2+|\na v_\varepsilon|^2)\,dxdt
	\\
	&+C\sup_{x_0\in\R^3}\int_{0}^{s}\int_{B_{R}(x_0)}(|\p_t u_\varepsilon|^2+|\na v_\varepsilon|^2)\,dxdt.
	\end{align*}
\end{lemma}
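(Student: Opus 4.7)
The plan is to adapt the Hodge-decomposition approach used for similar local pressure estimates in the nematic liquid crystal literature; see \cite{HW,HM}.

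First, taking the divergence of \eqref{G-L1} and using \eqref{G-L2} yields the Poisson equation
$$-\Delta P_\varepsilon = \partial_i\partial_j F_{ij}, \qquad F_{ij}:=v_\varepsilon^iv_\varepsilon^j-\sigma_\varepsilon^{E,ij}-\tilde\sigma_\varepsilon^{L,ij},$$
where $\tilde\sigma_\varepsilon^L:=\sigma_\varepsilon^L-\alpha_4A_\varepsilon$ denotes the Leslie tensor minus its $\alpha_4A_\varepsilon$ piece (which drops out by incompressibility after two divergences). Using $\tfrac12\leq|u_\varepsilon|\leq\tfrac32$ together with $|N_\varepsilon|+|A_\varepsilon u_\varepsilon|\leq C(|\nabla v_\varepsilon|+|\partial_tu_\varepsilon|+|v_\varepsilon||\nabla u_\varepsilon|)$, one obtains the pointwise bound
$$|F|\leq C\bigl(|v_\varepsilon|^2+|\nabla u_\varepsilon|^2+|v_\varepsilon||\nabla u_\varepsilon|+|\nabla v_\varepsilon|+|\partial_tu_\varepsilon|\bigr).$$

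Next, I pick an auxiliary cut-off $\tilde\eta\in C_0^\infty(B_{4R}(x_0))$ with $\tilde\eta\equiv1$ on $B_{3R}(x_0)$ and $|\nabla\tilde\eta|\leq C/R$, and split $P_\varepsilon=P_1+P_2$, where $P_1:=-\mathcal R_i\mathcal R_j(\tilde\eta F_{ij})$ is defined via the Riesz transforms. Then $P_2=P_\varepsilon-P_1$ is harmonic on $B_{3R}(x_0)\supset\text{supp }\phi$, and I take $c_\varepsilon(t)$ to be the mean value of $P_2(\cdot,t)$ over $B_{3R}(x_0)$, which is well-defined since $P_2$ is real-analytic there. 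The decomposition $P_\varepsilon-c_\varepsilon(t)=P_1+(P_2-c_\varepsilon(t))$ reduces the problem to estimating the two pieces separately.

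For $P_1$, the $L^2$-boundedness of the Riesz transforms gives $\|P_1\|_{L^2(\R^3)}^2\leq C\|F\|_{L^2(B_{4R}(x_0))}^2$. Covering $B_{4R}(x_0)$ by a uniformly bounded number of balls $\{B_R(y_k)\}$ and applying on each the H\"older--Sobolev chain
\begin{align*}
\bigl\||v_\varepsilon|^2\bigr\|_{L^2(B_R(y_k))}^2&=\|v_\varepsilon\|_{L^4(B_R(y_k))}^4\leq\|v_\varepsilon\|_{L^3(B_R(y_k))}^2\|v_\varepsilon\|_{L^6(B_R(y_k))}^2\\
&\leq C\delta^2\bigl(\|\nabla v_\varepsilon\|_{L^2(B_R(y_k))}^2+R^{-2}\|v_\varepsilon\|_{L^2(B_R(y_k))}^2\bigr),
\end{align*}
where the smallness hypothesis \eqref{pressure estimate 1} provides $\|v_\varepsilon\|_{L^3(B_R(y_k))}\leq\delta$ and the scaled Sobolev embedding controls the $L^6$ norm, I obtain the desired bound; the same argument controls $|\nabla u_\varepsilon|^2$ and the mixed cubic $|v_\varepsilon||\nabla u_\varepsilon|$, whereas the linear pieces $|\nabla v_\varepsilon|$ and $|\partial_tu_\varepsilon|$ yield directly the $\delta^2$-free terms on the claimed right-hand side. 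Time integration then produces the $P_1$ contribution.

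Finally, for $P_2-c_\varepsilon(t)$, which is a zero-mean harmonic function on $B_{3R}(x_0)$, interior $L^\infty$ gradient estimates for harmonic functions combined with the identity $P_2=P_\varepsilon-P_1$ yield the required bound on $\|P_2-c_\varepsilon(t)\|_{L^2(B_{2R}(x_0))}$ after absorbing back into the $P_1$ estimate. This last step is the main technical obstacle: the naive interior harmonic estimate gives only $\|P_2-c_\varepsilon\|_{L^2(B_{2R})}\leq C\|P_2-c_\varepsilon\|_{L^2(B_{3R})}$, which reintroduces precisely the quantity one wishes to bound, so closing the argument rests on a scale-iteration or absorption trick adapted from the analogous pressure estimates in \cite{HW,HM}.
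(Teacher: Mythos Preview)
Your treatment of $P_1$ is fine and essentially parallel to the paper, but the $P_2$ step is a genuine gap that you yourself flag without resolving. The difficulty is not merely technical: once you split $P_\varepsilon=P_1+P_2$ with $P_2$ harmonic on $B_{3R}(x_0)$, any interior harmonic estimate for $P_2-c_\varepsilon$ on $B_{2R}$ must feed on information about $P_2$ on a \emph{larger} region, where $P_2=P_\varepsilon-P_1$ and you have no independent control on $P_\varepsilon$. There is no absorption or scale-iteration available here, because the quantity you want to bound (the local $L^2$ norm of the pressure minus a constant) does not appear on the right-hand side of the desired estimate with a small coefficient---it simply does not appear at all. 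Moreover, the references \cite{HW,HM} do \emph{not} proceed by a local/harmonic decomposition; they use the commutator method described below, so appealing to them for an absorption trick is misplaced.

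The paper's argument avoids this circularity entirely by working globally with Riesz transforms. One writes $P_\varepsilon=\mathcal R_i\mathcal R_j(F^{ij})$ on all of $\R^3$ and then decomposes
\[
(P_\varepsilon-c_\varepsilon(t))\phi=\mathcal R_i\mathcal R_j(F^{ij}\phi)+[\phi,\mathcal R_i\mathcal R_j](F^{ij})-c_\varepsilon(t)\phi.
\]
The first piece is handled as you do for $P_1$. For the commutator, one uses the explicit Riesz kernel and splits the $y$-integral into $B_{4R}(x_0)$ (near field) and its complement (far field). The near-field piece gains a factor $R^{-1}$ from $|\phi(x)-\phi(y)|\leq CR^{-1}|x-y|$ and is then controlled by Hardy--Littlewood--Sobolev. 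The decisive point is the far field: one \emph{chooses}
\[
c_\varepsilon(t)=\int_{\R^3\setminus B_{4R}(x_0)}\frac{(x_{0i}-y_i)(x_{0j}-y_j)}{|x_0-y|^5}F^{ij}(y,t)\,dy,
\]
so that the far-field contribution becomes a kernel \emph{difference} $K(x,y)-K(x_0,y)$, which obeys $|K(x,y)-K(x_0,y)|\leq C|x-x_0|/|x_0-y|^4$ for $x\in B_{2R}(x_0)$ and $|x_0-y|\geq 4R$. This extra power of decay makes the dyadic sum $\sum_{k\geq 4}k^{-4}$ converge, and each dyadic shell is bounded by $\sup_{x_0}\int_{B_R(x_0)}|F|^2$, yielding exactly the right-hand side of the lemma. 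This explicit choice of $c_\varepsilon(t)$ is what replaces your unresolved harmonic-absorption step; without it (or an equivalent far-field cancellation mechanism) the argument does not close.
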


\begin{proof}
	Adapting the procedure from Lemma 2.3 \cite{HM}, we take divergence on both sides of \eqref{G-L1} then the pressure $P_\varepsilon$ satisfies the elliptic equation
	\begin{equation*}\alabel{pressure estimateP1.0}
	-\Delta P_\varepsilon=\nabla_{ij}^2[\nabla_iu_\varepsilon^kW_{p_j^k}(u_\varepsilon,\nabla u_\varepsilon)+v_\varepsilon^jv_\varepsilon^i-\sigma_\varepsilon^L]\quad\text{on}~~\R^3\times[0,T_\varepsilon],
	\end{equation*}
	which implies
	$$P_\varepsilon=\mathcal{R}_i\mathcal{R}_j(F^{ij}),\quad F^{ij}=:\nabla_iu_\varepsilon^kW_{p_j^k}(u_\varepsilon,\nabla u_\varepsilon)+v_\varepsilon^jv_\varepsilon^i-\sigma_\varepsilon^L,$$
	where $\mathcal{R}_i$ is the $i$-th Riesz transform on $\R^3$. Then  we have
	\begin{equation}\label{pressure estimateP1.1}
	(P_\varepsilon-c_\varepsilon(t))\phi=\mathcal{R}_i\mathcal{R}_j(F^{ij}\phi)+[\phi,\mathcal{R}_i\mathcal{R}_j](F^{ij})-c_\varepsilon\phi
	\end{equation}for a cut-off function $\phi$,
	where the commutator $[\phi,\mathcal{R}_i\mathcal{R}_j]$ is defined by
	$$[\phi,\mathcal{R}_i\mathcal{R}_j](\cdot)=\phi\mathcal{R}_i\mathcal{R}_j(\cdot)-\mathcal{R}_i\mathcal{R}_j(\cdot\,\phi).$$
	Since $$|F^{ij}|\leq C(|\na u_\varepsilon|^2+|v_\varepsilon|^2+|\p_t u_\varepsilon|+|\na v_\varepsilon|)$$ and the Riesz operator maps $L^q$ into $L^q$ spaces for any $1<q<+\infty$, we have
	\begin{align*}\alabel{pressure estimateP1.2}
	&\int_{0}^{s}\int_{\R^3}|\mathcal{R}_i\mathcal{R}_j(F^{ij}\phi)|^2\,dxdt\\
	\leq C&\int_{0}^{s}\int_{\R^3}(|\na u_\varepsilon|^4+|v_\varepsilon|^4+|\p_t u_\varepsilon|^2+|\na v_\varepsilon|^2)\phi^2\,dxdt
	\\
		\leq C&\delta^2\int_{0}^{s}\int_{B_{2R(x_0)}}|\na^2 u_\varepsilon|^2+|\na v_\varepsilon|^2dxdt+\frac{C\delta^2}{R^2}\int_{0}^{s}\int_{B_{2R(x_0)}}|\na u_\varepsilon|^2+|v_\varepsilon|^2\,dxdt\\
		&+C\int_{0}^{s}\int_{B_{2R(x_0)}}|\p_t u_\varepsilon|^2+|\na v_\varepsilon|^2\,dxdt,
	\end{align*}
	where we have utilized the following estimate that
	\begin{align*}\alabel{pressure estimateP1.3}
	&\int_{0}^{s}\int_{\R^3}(|\na u_\varepsilon|^4+|v_\varepsilon|^4)\phi^2\,dxdt
	\\
	\leq \int_{0}^{s}&\left(\sup_{0\leq t\leq s,x_0\in\R^3}\int_{B_{R(x_0)}}|\na u_\varepsilon|^3+|v_\varepsilon|^3\,dx\right)^{\frac 23}\left(\int_{\R^3}|\na u_\varepsilon\phi|^6+|v_\varepsilon\phi|^6\,dx\right)^{\frac13}dt
	\\
	\leq C\delta^2&\int_{0}^{s}\int_{B_{2R(x_0)}}\hspace{-1em}|\na^2 u_\varepsilon|^2+|\na v_\varepsilon|^2\,dxdt+\frac{C\delta^2}{R^2}\int_{0}^{s}\int_{B_{2R(x_0)}}|\na u_\varepsilon|^2+|v_\varepsilon|^2\,dxdt.
	\end{align*}
	Since $\text{supp}\, \phi\subset B_{2R(x_0)}$, the commutator can be expressed as
	\begin{align*}\alabel{pressure estimateP2.1}
	&[\phi,\mathcal{R}_i\mathcal{R}_j](F^{ij})(x,t)-c_\varepsilon(t)\phi(x)\\
	=&\int_{\R^3}\frac{(\phi(x)-\phi(y))(x_i-y_i)(x_j-y_j)}{|x-y|^5}F^{ij}(y,t)\,dy-c_\varepsilon(t)\phi(x)\\
	=&\int_{B_{4R}(x_0)}\frac{(\phi(x)-\phi(y))(x_i-y_i)(x_j-y_j)}{|x-y|^5}F^{ij}(y,t)\,dy\\
	&\quad+\phi(x)\left[\int_{\R^3\backslash B_{4R}(x_0)}\frac{(x_i-y_i)(x_j-y_j)}{|x-y|^5}F^{ij}(y,t)\,dy-c_\varepsilon(t)\right]\\
	=&:f_1(x,t)+f_2(x,t).
	\end{align*}
	Note that
	\begin{align*}
	|f_1(x,t)|\leq \frac CR\int_{\R^3}\frac{(|\nabla u_\varepsilon|^2+|v_\varepsilon|^2+|\p_t u_\varepsilon|+|\na v_\varepsilon|)\chi_{B_{4R(x_0)}}}{|x-y|^2}\,dy
	\end{align*}
	and the Hardy-Littlewood-Sobolev inequality holds by (c.f. \cite{HM})
	$$\|I_\alpha(f)\|_{L^q(\R^n)}\leq C\|f\|_{L^r(\R^n)},\quad\frac{1}{q}=\frac{1}{r}-\frac{\alpha}{n}, $$
	where $I_\alpha(f)=:\int_{\R^n}\frac{f(y)}{|x-y|^{n-\alpha}}\,dy$.
	Then it follows from H\"{o}lder's inequality and standard covering arguments that
	\begin{align*}\alabel{pressure estimateP2.2}
	&\int_{0}^{s}\int_{\R^3}|f_1(x,s)|^2\;dxdt\leq CR^{-2}\int_{0}^{s}\|(F^{ij})\chi_{B_{4R(x_0)}}\|_{L^\frac{6}{5}(\R^3)}^2\,dt
	\\
	\leq \frac C{R^2}&\int_{0}^{s}\|(|\na u_\varepsilon|+|v_\varepsilon|)\chi_{B_{4R}(x_0)}\|_{L^3(\R^3)}^2\|(|\na u_\varepsilon|+|v_\varepsilon|)\chi_{B_{4R}(x_0)}\|_{L^2(\R^3)}^2\,dt
	\\
	+ \frac C{R^2}&\int_{0}^{s}\|\chi_{B_{4R}(x_0)}\|^2_{L^3(\R^3)}\|(|\p_t u_\varepsilon|+|\na v_\varepsilon|)\chi_{B_{4R}(x_0)}\|_{L^2(\R^3)}^2\,dt
	\\
	&\hspace{-2.5em}\leq\frac{C\delta^2}{R^2}\int_{0}^{s}\int_{B_{4R}(x_0)}|\na u_\varepsilon|^2+|v_\varepsilon|^2\,dxdt+C\int_{0}^{s}\int_{B_{4R}(x_0)}|\p_t u_\varepsilon|^2+|\na v_\varepsilon|^2\,dxdt,
	\end{align*}
	where $\chi_{B_{4R}(x_0)}(x)=1$ for $x\in B_{4R}(x_0)$ and 0 otherwise. As in Lemma 3.2 of \cite{HW}, to estimate the term involving $f_2(x,t)$, we choose
	$$c_\varepsilon(t)=\int_{\R^3\backslash B_{4R}(x_0)}\frac{(x_{0i}-y_i)(x_{0j}-y_j)}{|x_0-y|^5}F^{ij}(y,t)\,dy,$$
	which is finite for any approximation data $(v, u,\p_t u)\in \dot{H}^1(\R^3)\times \dot{H}^2(\R^3)\times L^2(\R^3)$. Then, one has
	\begin{align*}
	|f_2(x,t)|\leq \left|CR\phi(x)\int_{\R^3\backslash B_{4R}(x_0)}\frac{(|\nabla u_\varepsilon|^2+|v_\varepsilon|^2+|\p_t u_\varepsilon|+|\na v_\varepsilon|)(y)}{|x_0-y|^4}\,dy\right|,
	\end{align*}
	due to the fact (c.f. \cite{Ste}) that
	\begin{align*}
	\left|\frac{(x_{i}-y_i)(x_{j}-y_j)}{|x-y|^5}-\frac{(x_{0i}-y_i)(x_{0j}-y_j)}{|x_0-y|^5}\right|\leq C \frac{|x_0-x|}{|x_0-y|^4}.
	\end{align*}
	Upon relabeling and using H\"{o}lder's inequality we observe
	\begin{align*}
	&\int_{0}^{s}\int_{\R^3}|f_2(z,s)|^2\,dzdt
	\alabel{pressure estimateP2.3}
	\\
	\leq&CR^{5}\int_{0}^{s}\left|\sum^\infty_{k=4}\frac{C}{(kR)^4}\int_{B_{(k+1)R(x_0)}\backslash B_{kR(x_0)}}F^{ij}(x,t)\,dx\right|^2 \,dt
	\\
	\leq&CR^{5}\int_{0}^{s}\sum^\infty_{k=4}\frac{C}{(kR)^8}\int_{B_{(k+1)R(x_0)}\backslash B_{kR(x_0)}}|F^{ij}|^2\; dx\cdot|B_{(k+1)R}\backslash B_{kR}| \, dt
	\\
	\leq&C\sup_{x_0\in\R^3}\int_{0}^{s}\sum^\infty_{k=4}k^{-4}\int_{B_{R(x_0)}}|F^{ij}|^2\,dxdt
	\\
	\leq&\sup_{x_0\in\R^3}\int_{0}^{s}\int_{B_{R}(x_0)}\frac{C\delta^2}{R^2}(|\na u_\varepsilon|^2+|v_\varepsilon|^2)+C\delta^2(|\na^2 u_\varepsilon|^2+|\na v_\varepsilon|^2)\,dxdt
	\\
	&+C\sup_{x_0\in\R^3}\int_{0}^{s}\int_{B_{R}(x_0)}(|\p_t u_\varepsilon|^2+|\na v_\varepsilon|^2)\,dxdt,
	\end{align*}
	where the last step follows from \eqref{pressure estimateP1.3}.
	Now combine \eqref{pressure estimateP1.2}, \eqref{pressure estimateP2.2} and \eqref{pressure estimateP2.3}, then apply standard covering arguments to complete the proof.
\end{proof}

\section{Local existence}
In this section, we prove the local well-posedness of the general Ericksen-Leslie system \eqref{E-L1}-\eqref{E-L3} with initial data $(v_0,u_0)\in H^1(\R^3,\R^3)\times H_b^2(\R^3,S^2)$ by using the Ginzburg-Landau approximation approach. The following lemma states the local well-posedness of the Ginzburg-Landau approximation system \eqref{G-L1}-\eqref{G-L3}.
\begin{lemma}\label{existence of G-L}
	Let $(v_{0,\varepsilon},u_{0,\varepsilon})$ be the initial data satisfying
	\begin{align}\label{initial data}
	u_{0,\varepsilon}-b\in H^2(\R^3),\quad v_{0,\varepsilon}\in H^1(\R^3),\quad\text{div }\, v_{0,\varepsilon}=0,
	\end{align}
	where $b$ is a constant unit vector. Then there exists a constant $T_\varepsilon>0$ such that the  system \eqref{G-L1}-\eqref{G-L3} with initial data $(v_{0,\varepsilon},u_{0,\varepsilon})$ has a unique strong solution $(v_\varepsilon,u_\varepsilon)$ in $\R^3\times(0,T_\varepsilon)$ satisfying
	\begin{align*}
	&v_\varepsilon\in L^\infty(0,T_\varepsilon,H^1(\R^3))\cap L^2(0,T_\varepsilon;H^2(\R^3)),\quad \p_tu_\varepsilon\in L^2(\R^3\times(0,T_\varepsilon)),\\
	&u_\varepsilon\in L^\infty(0,T_\varepsilon,H_b^2(\R^3))\cap L^2(0,T_\varepsilon;H_b^3(\R^3)),\quad \p_tu_\varepsilon\in L^2(0,T_\varepsilon;H^1(\R^3)).
	\end{align*}
\end{lemma}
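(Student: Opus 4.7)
\medskip
\noindent\textbf{Proof proposal.}
My plan is to use a Galerkin approximation scheme combined with the a priori estimates already available from Section 2. First I would rewrite the director equation \eqref{G-L3} as a nondegenerate parabolic system. Taking the cross-product structure into account, since $\gamma_1 > 0$ by \eqref{ag2}, I can solve for $\partial_t u_\varepsilon$ and obtain
\begin{equation*}
\gamma_1\partial_t u_\varepsilon = h_\varepsilon - \gamma_2 A_\varepsilon u_\varepsilon + \gamma_1 \Omega_\varepsilon u_\varepsilon - \gamma_1 v_\varepsilon\cdot\nabla u_\varepsilon,
\end{equation*}
where the leading-order term in $h_\varepsilon$ is the strongly elliptic operator $\nabla_\alpha(W_{p^i_\alpha}(u_\varepsilon,\nabla u_\varepsilon))$ thanks to \eqref{Co2}, and the penalty term $\varepsilon^{-2}u_\varepsilon(1-|u_\varepsilon|^2)$ is a smooth cubic nonlinearity for each fixed $\varepsilon > 0$. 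Coupled with the Navier-Stokes-type equation \eqref{G-L1}-\eqref{G-L2}, this is a quasilinear parabolic-Stokes system to which standard methods apply.

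Next I would choose a countable orthonormal basis $\{\psi_k\}$ of the divergence-free space $H^1_\sigma(\R^3)$ and a basis $\{\phi_k\}$ of $H^2(\R^3)$, set $v^n_\varepsilon = \sum_{k=1}^n c^n_k(t)\psi_k$ and $u^n_\varepsilon = b + \sum_{k=1}^n d^n_k(t)\phi_k$, and project \eqref{G-L1} and the reformulated director equation onto the span of $\{\psi_k\}$ and $\{\phi_k\}$ respectively. The resulting ODE system has locally Lipschitz right-hand side, so Picard iteration yields local-in-time existence on some interval $[0,T_n)$. To extend the solutions to a common interval $[0,T_\varepsilon)$ I would run the second-order estimates of Lemma \ref{second order estimates} at the Galerkin level with $\phi \equiv 1$, which yields, through a Gronwall argument on the quantity $\|v^n_\varepsilon\|_{H^1}^2 + \|u^n_\varepsilon - b\|_{H^2}^2 + \varepsilon^{-2}\|1-|u^n_\varepsilon|^2\|^2_{H^1}$, a uniform-in-$n$ bound up to some $T_\varepsilon > 0$ depending only on the initial norm and $\varepsilon$. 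Note that the smallness condition $\tfrac{1}{2}\leq|u_\varepsilon^n|\leq\tfrac{3}{2}$ needed in Lemma \ref{second order estimates} follows by Sobolev embedding $H^2(\R^3)\hookrightarrow L^\infty(\R^3)$ and shrinking $T_\varepsilon$ so that $u^n_\varepsilon$ stays close to $u_{0,\varepsilon}$ in $L^\infty$; if the initial datum itself is not in the range $[1/2,3/2]$ one rescales, but the standing assumption $u_{0,\varepsilon}-b\in H^2$ with $|b|=1$ suffices for short times.

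With the uniform bounds in hand, I would pass to the limit $n\to\infty$ via the Aubin-Lions compactness lemma: from the energy-dissipation bounds on $\partial_t v^n_\varepsilon$ in $L^2_tL^2_x$ and $\partial_t u^n_\varepsilon$ in $L^2_t H^1_x$, and the uniform bounds in $L^\infty_t H^1_x\cap L^2_t H^2_x$ for $v^n_\varepsilon$ and $L^\infty_t H^2_x\cap L^2_t H^3_x$ for $u^n_\varepsilon - b$, one extracts subsequences converging strongly in $L^2_{\loc}$ to a limit pair $(v_\varepsilon,u_\varepsilon)$ which satisfies \eqref{G-L1}-\eqref{G-L3} in the strong sense. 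The pressure $P_\varepsilon$ is recovered by solving the Poisson equation $-\Delta P_\varepsilon = \nabla\cdot\nabla\cdot(v_\varepsilon\otimes v_\varepsilon - \sigma^E_\varepsilon - \sigma^L_\varepsilon)$ as in Lemma \ref{pressure estimate}. For uniqueness, suppose $(v^1,u^1)$ and $(v^2,u^2)$ are two strong solutions with the same initial data; then the difference $(V,U) := (v^1-v^2,u^1-u^2)$ satisfies a linear system whose coefficients are controlled by the $H^1\times H^2$ norms of the two solutions, and a standard energy argument on $\|V\|_{L^2}^2 + \|U\|_{H^1}^2$ combined with Gronwall's inequality forces $V\equiv U\equiv 0$.

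The main obstacle I anticipate is the coupling between the highest-order terms in $v_\varepsilon$ and $u_\varepsilon$ through the Leslie stress $\sigma^L_\varepsilon$ and the director stretching $\gamma_2 A_\varepsilon u_\varepsilon$: these produce cross terms such as $\nabla^2 v_\varepsilon$ on the right of the $u$-equation and $\nabla\partial_t u_\varepsilon$ on the right of the $v$-equation. This is precisely the cancellation structure that was exploited in Lemmas \ref{energy-identity}, \ref{local estimate} and \ref{second order estimates}, where terms like $\int h_\varepsilon^T\Omega_\varepsilon u_\varepsilon$ and $\int h_\varepsilon^T A_\varepsilon u_\varepsilon$ appearing with opposite signs in the $v$- and $u$-equations cancel after summation. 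In the Galerkin scheme one must therefore arrange the test functions so that these cancellations survive the projection, which is automatic provided one tests with $v^n_\varepsilon$ and $h^n_\varepsilon/\gamma_1$ (rather than with $\partial_t u^n_\varepsilon$) in a manner compatible with the bases; the energy-dissipation structure then closes the estimates exactly as in Lemma \ref{second order estimates}.
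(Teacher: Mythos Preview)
Your proposal is correct in spirit and would work, but it is considerably more elaborate than what the paper actually does: the paper's proof consists of a single sentence stating that local well-posedness follows from the standard contraction mapping principle, with a reference to the analogous argument in Lin--Liu \cite{LL2}. No details are given.

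The difference is worth noting. The paper treats Lemma~\ref{existence of G-L} as a routine fact because, for each \emph{fixed} $\varepsilon>0$, the penalty term $\varepsilon^{-2}u_\varepsilon(1-|u_\varepsilon|^2)$ is simply a smooth cubic nonlinearity and the system is a quasilinear parabolic--Stokes system with no unit-length constraint; a Picard iteration in the natural function spaces closes directly, and the constants are allowed to depend on $\varepsilon$. Your Galerkin scheme is a valid alternative, but you are importing machinery (the cancellation structure of Lemmas~\ref{energy-identity}--\ref{second order estimates}, the constraint $\tfrac12\le|u_\varepsilon|\le\tfrac32$) that is really designed for \emph{uniform-in-$\varepsilon$} estimates and is unnecessary here. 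In particular, your worry about whether the cross-term cancellations survive the Galerkin projection is a genuine technical issue for the later uniform estimates, but for Lemma~\ref{existence of G-L} itself you can bypass it entirely: since $\varepsilon$ is fixed, the terms $\int h_\varepsilon^T\Omega_\varepsilon u_\varepsilon$ and $\int h_\varepsilon^T A_\varepsilon u_\varepsilon$ can simply be bounded by Young's inequality with $\varepsilon$-dependent constants, and the Gronwall argument still closes on a short time interval $T_\varepsilon$. So your approach buys a self-contained construction at the cost of extra work, while the paper's approach buys brevity by outsourcing to the literature.
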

\begin{proof}
	The local well-posedness of the system \eqref{G-L1}-\eqref{G-L3} follows from the standard contraction mapping principle. We omit the proof and refer to the similar argument in Lin-Liu \cite{LL2}.
\end{proof}

The following proposition gives the uniform estimates of solutions $(u_\varepsilon,v_\varepsilon)$ in Lemma \ref{existence of G-L}.
\begin{prop}\label{prop2.1}
	Let $(v_{0,\varepsilon},u_{0,\varepsilon})$ be the initial data in Lemma \ref{existence of G-L} satisfying
	\begin{equation}\label{initial data 2}
	\frac{3}{4}\leq |u_{0,\varepsilon}|\leq \frac{5}{4},\quad\|u_{0,\varepsilon}-b\|_{H^2}^2+\|v_{0,\varepsilon}\|_{H^1}^2+\varepsilon^{-2}\|(1-|u_{0,\varepsilon}|^2)\|_{H^1}^2\leq M,
	\end{equation}
	for some constant $M$ independent of $\varepsilon$. Then there is a uniform constant $T_M$ in $\varepsilon$ such that the system \eqref{G-L1}-\eqref{G-L3} with initial data $(u_{0,\varepsilon},v_{0,\varepsilon})$ has a unique strong solution $(u_\varepsilon,v_\varepsilon)$ in $\R^3\times[0,T_M]$ satisfying
	\begin{equation}\label{2.66}
	\frac{3}{4}\leq |u_\varepsilon|\leq\frac{5}{4}\quad\text{in  }\R^3\times[0,T_M]
	\end{equation}
	and
	\begin{align}\label{2.67}
	&\sup_{0\leq t\leq T_M}\left(\|v_\varepsilon\|_{H^1}^2+\|\D u_\varepsilon\|_{H^1}^2+\varepsilon^{-2}\|(1-|u_\varepsilon|^2)\|_{H^1}^2\right)+\|\D v_\varepsilon\|_{L^2(0,T_M;H^1)}^2\\
	&+\|\D^2 u_\varepsilon\|_{L^2(0,T_M;H^1)}^2+\|\p_t u_\varepsilon\|_{L^2(0,T_M;H^1)}^2+\varepsilon^{-2}\|\D(|u_\varepsilon|^2)\|_{L^2(0,T_M;H^1)}^2\leq C_M,\nonumber
	\end{align}
	provided $\varepsilon\leq \varepsilon_M$, where $T_M$, $\varepsilon_M$ and $C_M$ are positive constants only depend on $M$.
\end{prop}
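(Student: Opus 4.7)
The plan is to run a continuity argument combining the global dissipation from Lemma \ref{energy-identity} with the second-order estimate of Lemma \ref{second order estimates} and the pressure estimate of Lemma \ref{pressure estimate}. Introduce the energy functional
\[
\mathcal{E}_\varepsilon(t):=\|v_\varepsilon(\cdot,t)\|_{H^1}^2+\|\na u_\varepsilon(\cdot,t)\|_{H^1}^2+\varepsilon^{-2}\|1-|u_\varepsilon(\cdot,t)|^2\|_{H^1}^2,
\]
and the local $L^3$ concentration
\[
\Phi_\varepsilon(t):=\sup_{0\le s\le t,\ x_0\in\R^3}\int_{B_R(x_0)}(|\na u_\varepsilon(x,s)|^3+|v_\varepsilon(x,s)|^3)\,dx.
\]
Pick small constants $\delta>0$ and $R>0$ to be chosen, and define
\[
T^\star:=\sup\Big\{t\in[0,T_\varepsilon):\tfrac{3}{4}\le|u_\varepsilon|\le\tfrac{5}{4}\text{ on }\R^3\times[0,t],\ \Phi_\varepsilon(t)\le\delta^3\Big\}.
\]
By the $H^2$-continuity of $u_\varepsilon$ given by Lemma \ref{existence of G-L}, $T^\star>0$. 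The goal is to show $T^\star\ge T_M$ with $T_M$ depending only on $M$.

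On $[0,T^\star]$, Lemma \ref{second order estimates} applies and its right-hand side involves $\int(|v_\varepsilon|^2+|\na u_\varepsilon|^2)(|\na v_\varepsilon|^2+|\na^2u_\varepsilon|^2+|\p_tu_\varepsilon|^2)$ together with pressure terms. Using the localized Gagliardo-Nirenberg interpolation
\[
\int_{B_R(x_0)}|f|^2|g|^2\,dx\le C\Big(\int_{B_R(x_0)}|f|^3\,dx\Big)^{2/3}\|g\|_{L^6(\R^3)}^2
\]
together with a standard covering argument (as in \cite{HM}), these bilinear terms are bounded by $C\Phi_\varepsilon(t)^{2/3}(\|\na^2u_\varepsilon\|_{L^2}^2+\|\na v_\varepsilon\|_{L^2}^2+\|\p_tu_\varepsilon\|_{L^2}^2)+$ lower order. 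The pressure term is estimated by Lemma \ref{pressure estimate}, whose hypothesis is precisely $\Phi_\varepsilon(t)\le\delta^3$; the output produces the same dissipation norms with an extra factor $\delta^2$. For $\delta$ small enough, both contributions absorb into the left-hand side of the second-order estimate. Adding Lemma \ref{energy-identity} to handle zero-order terms yields
\[
\mathcal{E}_\varepsilon(t)+\tfrac{1}{2}\int_0^t\mathcal{D}_\varepsilon(s)\,ds\le C\mathcal{E}_\varepsilon(0)+C\int_0^t\mathcal{E}_\varepsilon(s)^2\,ds,
\]
where $\mathcal{D}_\varepsilon$ collects the $H^1$-dissipation of $v_\varepsilon$, $\na u_\varepsilon$, $\varepsilon^{-1}\na(|u_\varepsilon|^2)$ and $\p_tu_\varepsilon$. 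Since $\mathcal{E}_\varepsilon(0)\le M$, a standard nonlinear Gronwall argument gives $\mathcal{E}_\varepsilon(t)\le 2CM$ on $[0,T_1]$ for some $T_1=T_1(M)>0$.

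It remains to preserve the two constraints defining $T^\star$. For the $L^3$ concentration, the Gagliardo-Nirenberg inequality on $B_R(x_0)$ and the fundamental theorem of calculus yield
\[
\int_{B_R(x_0)}(|\na u_\varepsilon|^3+|v_\varepsilon|^3)(x,t)\,dx\le \Phi_\varepsilon(0)+C\sqrt{t}\,\mathcal{E}_\varepsilon^{1/2}\Big(\int_0^t\mathcal{D}_\varepsilon\,ds\Big)^{1/2}+Ct\,\mathcal{E}_\varepsilon^{3/2},
\]
so choosing first $R$ small enough that $\Phi_\varepsilon(0)\le\delta^3/4$ (using absolute continuity of the integral and the $H^1$-bound on $\na u_\varepsilon,v_\varepsilon$ provided by $M$) and then $T_M\le T_1$ small enough in terms of $\delta$ and $M$ ensures $\Phi_\varepsilon(t)\le\delta^3/2$ on $[0,T_M]$. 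For the pointwise bound on $|u_\varepsilon|$, use $H^2\hookrightarrow L^\infty$ together with the fact that $1-|u_\varepsilon|^2$ is controlled in $H^1$ by $\varepsilon\sqrt{\mathcal{E}_\varepsilon}$; interpolating with the uniform $H^2$-bound and invoking continuity in time of $|u_\varepsilon|^2$ lets us keep $\tfrac{3}{4}\le|u_\varepsilon|\le\tfrac{5}{4}$ on $[0,T_M]$ for $\varepsilon\le\varepsilon_M$ small enough.

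The main obstacle is the compatibility of the three smallness thresholds: $\delta$ must be small enough for the pressure and Gagliardo-Nirenberg absorptions in Lemma \ref{second order estimates}, $R$ must be small enough (depending on $M$) so that the initial $L^3$-concentration is under $\delta^3/4$, and $\varepsilon_M$ must be small enough that the $\varepsilon^{-2}(1-|u_\varepsilon|^2)$ term in $\mathcal{E}_\varepsilon$ controls $\|1-|u_\varepsilon|^2\|_{L^\infty}$. Tracking these dependencies carefully, and verifying that the singular factor $\varepsilon^{-2}(1-|u_\varepsilon|^2)$ in $h_\varepsilon$ (appearing throughout the estimates of Lemmas \ref{local estimate}--\ref{second order estimates}) does not destroy uniformity, constitutes the technical core of the proof.
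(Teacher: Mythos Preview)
Your continuity-argument framework is correct and matches the paper's, but the mechanism you propose for closing the $L^3$ concentration is different from the paper's and, as written, has a gap.

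The paper does \emph{not} propagate $\Phi_\varepsilon(t)$ via the fundamental theorem of calculus in time. Instead, it uses the \emph{localized} first- and second-order estimates (Lemma \ref{local estimate} and Lemma \ref{second order estimates} with a cut-off $\phi$ supported in $B_{2R_0}(x_0)$), scaled by $R_0^{-1}$ and $R_0$ respectively, together with the pressure bound of Lemma \ref{pressure estimate}, to show that both
\[
\frac{1}{R_0}\int_{B_{R_0}(x_0)}(|\na u_\varepsilon|^2+|v_\varepsilon|^2)\,dx
\quad\text{and}\quad
R_0\int_{B_{R_0}(x_0)}(|\na^2 u_\varepsilon|^2+|\na v_\varepsilon|^2)\,dx
\]
are bounded by $C\delta^2/L^2+C\delta^2 T/R_0^2$. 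The spatial Gagliardo--Nirenberg inequality $\int_{B_{R_0}}|f|^3\le C(R_0^{-1}\!\int|f|^2)^{3/2}+C(R_0\!\int|\na f|^2)^{3/2}$ then closes $\Phi_\varepsilon\le\delta^3/2$ by choosing $L$ large and $T_M=\sigma R_0^2$. Only \emph{after} the $L^3$ bootstrap is closed does the paper switch to the \emph{global} estimates \eqref{fir-ord} and \eqref{second order estimates.2} (which contain no pressure term) to obtain \eqref{2.67}. In your outline this separation of roles is blurred: you invoke the pressure lemma while simultaneously writing a global inequality for $\mathcal{E}_\varepsilon$, but the global versions of Lemmas \ref{local estimate}--\ref{second order estimates} have $\phi\equiv 1$ and no pressure appears; conversely, the local versions cannot be summed over a covering to yield a global bound because the constants $c_\varepsilon(t)$ in the pressure estimate depend on the ball.

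Your covering bound on the bilinear term is also misstated: with $f=|v_\varepsilon|+|\na u_\varepsilon|$ and $g=|\na v_\varepsilon|+|\na^2 u_\varepsilon|+|\partial_t u_\varepsilon|$, Sobolev on each ball gives $\|g\|_{L^6(B_R)}^2\le C\|\na g\|_{L^2(B_R)}^2+CR^{-2}\|g\|_{L^2(B_R)}^2$, so after summing you obtain $C\delta^2(\|\na^3 u_\varepsilon\|_{L^2}^2+\|\na^2 v_\varepsilon\|_{L^2}^2+\|\na\partial_t u_\varepsilon\|_{L^2}^2)+C\delta^2 R^{-2}\mathcal{D}^0$, i.e.\ the \emph{higher} dissipation $\mathcal{D}^1$ with a $\delta^2$ factor (which absorbs) plus an $R^{-2}$-weighted lower term. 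This yields a linear Gronwall with coefficient $\sim R^{-2}$, not the nonlinear $\int\mathcal{E}_\varepsilon^2$ you wrote; the resulting time restriction is $T_M\sim R_0^2\sim M^{-2}$, exactly as in the paper.

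Finally, your FTC inequality for $\int_{B_R}(|\na u_\varepsilon|^3+|v_\varepsilon|^3)$ is not justified: differentiating in time produces $\int_{B_R}|\na u_\varepsilon|\na u_\varepsilon\cdot\na\partial_t u_\varepsilon$ and $\int_{B_R}|v_\varepsilon|v_\varepsilon\cdot\partial_t v_\varepsilon$, and controlling the latter requires an $L^2(L^2)$ bound on $\partial_t v_\varepsilon$ that is not part of your $\mathcal{D}_\varepsilon$. This route can be made to work, but it is more delicate than the paper's purely spatial interpolation, and the displayed inequality as stated does not follow.
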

\begin{proof}
	For the initial data $(u_{0,\varepsilon},v_{0,\varepsilon})$ satisfying \eqref{initial data} and \eqref{initial data 2}, it follows from the Sobolev embedding $H^1(\R^3)\hookrightarrow L^6(\R^3)$ with the constant $C_1$ that for any $0<\delta<1$, there exists a $R_0=:\frac{\delta^2}{C_1^2L^2M}$ such that
	\begin{align*}\alabel{2.68}
&\sup_{x_0\in\R^3}\int_{B_{R_0}(x_0)}|\na u_{0,\varepsilon}|^3+|v_{0,\varepsilon}|^3+\frac{|1-|u_{0,\varepsilon}|^2|^3}{\varepsilon^3}\,dx
\\
\leq&CR_0^{\frac 32}\left(\sup_{x_0\in\R^3}\int_{B_{R_0}(x_0)}|\na u_{0,\varepsilon}|^6+|v_{0,\varepsilon}|^6+\frac{|1-|u_{0,\varepsilon}|^2|^6}{\varepsilon^6}\,dx\right)^{\frac 12}
\\
\leq &R_0^{\frac 32}C^3_1 \left(\|\na  u_{0,\varepsilon}\|_{H^1(\R^3)}+\|  v_{0,\varepsilon}\|_{H^1(\R^3)}+\varepsilon^{-1}\left\|(|1-|u_{0,\varepsilon}|^2)\right\|_{H^1(\R^3)}\right)^3
\\
\leq &C_1^3(R_0M)^{\frac 32}=\frac{\delta^3}{L^3}.
\end{align*}
	where $L>1$ is an absolute constant independent of $\varepsilon$ and $M$ to be chosen.
	By Lemma \ref{existence of G-L}, there exists a unique strong solution to the system \eqref{G-L1}-\eqref{G-L3}  in $\R^3\times[0,T_\varepsilon]$ with initial data $(u_{0,\varepsilon},v_{0,\varepsilon})$. Since the solution $(u_\varepsilon,v_\varepsilon)$ is continuous, which follows from the Sobolev inequality, there is a time $T_\varepsilon^1\in(0,T_\varepsilon]$ such that
	\begin{align}\label{2.69}
	\frac{1}{2}\leq |u_\varepsilon|\leq \frac{3}{2} \quad\text{in   } \R^3\times[0,T_\varepsilon^1)
	\end{align}
	and
	\begin{align}\label{2.70}
	\sup_{0\leq t\leq T_\varepsilon^1,x\in\R^3}\int_{B_{R_0}(x)}(|v_\varepsilon|^3+|\D u_\varepsilon|^3)\,dx\leq\delta^3.
	\end{align}
	 Now we shall show that \eqref{2.69} and \eqref{2.70} hold true for some uniform time $T$ by using the local energy estimate \eqref{local energy estimates}, \eqref{second order estimates.1} and \eqref{pressure estimate.1}. Let $\phi\in C_{0}^\infty(B_{2R_0}(x_0))$ be a cut-off function with $\phi\equiv 1$ on $B_{R_0}(x_0)$ and $|\nabla \phi|\leq \frac{C}{R_0}$ and $|\nabla^2\phi|\leq\frac{C}{R_0^2}$. It follows from \eqref{local energy estimates}  that
	\begin{align*}\alabel{2.74}
	&\sup_{0\leq t\leq T^1_\varepsilon}\frac{1}{R_0}\int_{B_{2R_0}(x_0)} |v_\varepsilon|^2+|\na u_\varepsilon|^2+\varepsilon^{-2}(|1-|u_\varepsilon|^2|^2)\, dx
	\\
	&+\frac{1}{R_0}\int^{T^1_\varepsilon}_0\int_{B_{2R_0}(x_0)}|\na^2u_\varepsilon|^2+|\na v_\varepsilon|^2+|\p_t u_\varepsilon|^2+\varepsilon^{-2}| \na|u_\varepsilon|^2|^2\,dxdt
	\\
	\leq &\frac{C}{R_0}\int_{B_{2R_0}(x_0)}|v_{0,\varepsilon}|^2 +|\na u_{0,\varepsilon}|^2+\frac{|1-|u_{0,\varepsilon}|^2|^2}{\varepsilon^2}\,dx
	\\
	&+\frac{C}{R^3_0}\int^{T^1_\varepsilon}_0\int_{B_{2R_0}(x_0)}  |\na u_\varepsilon|^2+|v_\varepsilon|^2\,dxdt
	\\
	&+\frac{C}{R_0}\int^{T^1_\varepsilon}_0\int_{B_{2R_0}(x_0)}   |\na u_\varepsilon|^4+|v_\varepsilon|^4\,dxdt  \\
	&+\frac{C}{R^2_0}\int^{T^1_\varepsilon}_0\int_{B_{2R_0}(x_0)}|P_\varepsilon-c_\varepsilon(t)||v_\varepsilon|\,dxdt
	\\
	=:&I_1+I_2+I_3+I_4.
	\end{align*}
	For $I_1$, since the ball $B_{2R_0}(x_0)$ can be covered by finitely many number, which is independent of $R_0$,  balls $B_{R_0}(y)$ with $y\in\R^3$, we obtain from H\"{o}lder's inequality, standard covering arguments and \eqref{2.68} that
	\begin{align*}
	I_1\leq  &\frac{C|B_{R_0}|^{1/3}}{R_0}\left(\int_{B_{2R_0}(x_0)}|u_{0,\varepsilon}|^3+ |v_{0,\varepsilon}|^3+\frac{|1-|u_{0,\varepsilon}|^2|^3}{\varepsilon^3}\,dx\right)^{2/3}
	\\
	\leq&C\left(\sup_{y\in\R^3}\int_{B_{R_0}(y)}|u_{0,\varepsilon}|^3+ |v_{0,\varepsilon}|^3+\frac{|1-|u_{0,\varepsilon}|^2|^3}{\varepsilon^3}\,dx\right)^{2/3}=\frac{C\delta^2}{L^2}.
	\end{align*}
	Similarly, using \eqref{2.70}, one has
	\begin{align*}
	I_2\leq&\frac{C|B_{R_0}|^{1/3}}{R_0^3}\int_{0}^{T_\varepsilon^1}\left(\int_{B_{2R_0}(x_0)}|v_\varepsilon|^3+|\nabla u_\varepsilon|^3\right)^{2/3}\,dt\\
	&\leq \frac{C}{R_0^2}\int_{0}^{T_\varepsilon^1}\left(\sup_{y\in \R^3}\int_{B_{R_0}(y)}|v_\varepsilon|^3+|\nabla u_\varepsilon|^3\right)^{2/3}\,dt\leq \frac{C\delta^2T_\varepsilon^1}{R_0^2}.
	\end{align*}
	Similar to \eqref{pressure estimateP1.3}, we employ the Sobolev inequality for $I_3$ and estimate of $I_2$ to compute
	\begin{align*}
	I_3\leq& \frac{C}{R_0}\int^{T^1_\varepsilon}_0\left(\int_{B_{2R_0}(x_0)}|\na u_\varepsilon|^{3}+|v_\varepsilon|^{3}\,dx\right)^\frac 23\left(\int_{B_{2R_0}(x_0)} |\na u_\varepsilon|^{6}+|v_\varepsilon|^{6} \,dx\right)^\frac13\,dt
	\\
	\leq & \frac{C\delta^2}{R_0}\sup_{y\in\R^3}\int^{T^1_\varepsilon}_0  \int_{B_{R_0}(y)} |\na^2 u_\varepsilon|^{2 }+|\na v_\varepsilon|^{2 } +R^{-2}_0 |\na u_\varepsilon|^{2 }+R^{-2}_0|v_\varepsilon|^{2}\,dxdt
	\\
	\leq &\frac{C\delta^2}{R_0}\sup_{y\in\R^3}\int^{T^1_\varepsilon}_0  \int_{B_{R_0}(y)} |\na^2 u_\varepsilon|^{2 }+|\na v_\varepsilon|^{2 } \,dxdt+\frac{C\delta^2T_\varepsilon^1}{R_0^2}.
	\end{align*}
	
	For $I_4$, it follows from Young's inequality, \eqref{pressure estimate.1} and the estimate of $I_2$ that
	\begin{align*}
	I_4\leq& \frac{\delta^2}{R_0}\int^{T^1_\varepsilon}_0\int_{B_{2R_0}(x_0)}|P_\varepsilon-c_{\varepsilon}(s)|^2\,dxdt+\frac{C_\delta}{R^3_0}\int^{T^1_\varepsilon}_0\int_{B_{2R_0}(x_0)}|v_\varepsilon|^2\,dxdt
	\\
	\leq&\frac{C\delta^2}{R_0}\sup_{y\in\R^3}\int_{0}^{T_\varepsilon^1}\int_{B_{R_0}(y)}\frac{\delta^2}{R_0^2}(|\na u_\varepsilon|^2+|v_\varepsilon|^2)+\delta^2(|\na^2 u_\varepsilon|^2+|\na v_\varepsilon|^2)\,dxdt
	\\
	&+\frac{C\delta^2}{R_0}\sup_{y\in\R^3}\int_{0}^{T_\varepsilon^1}\int_{B_{R_0}(y)}(|\p_t u_\varepsilon|^2+|\na v_\varepsilon|^2)\,dxdt+I_2
	\\
	\leq&\frac{C\delta^2}{R_0}\sup_{y\in\R^3}\int_{0}^{T_\varepsilon^1}\int_{B_{R}(y)}(|\p_t u_\varepsilon|^2+|\na v_\varepsilon|^2+|\na^2 u_\varepsilon|^2)\,dxdt+\frac{C\delta^2T_\varepsilon^1}{R_0^2}.
	\end{align*}
Substituting estimates of $I_i$, $i=1,2,3,4,$ into \eqref{2.74} and taking supremum in $x_0$, we obtain from choosing $\delta<1$ such that $C\delta^2<\frac{1}{4}$
	\begin{align*}\alabel{FHMPufbP4.2}
	&\sup_{0\leq t\leq T^1_\varepsilon,x_0\in\R^3}\frac{1}{R_0}\int_{B_{R_0}(x_0)}|\na u_\varepsilon|^2+|v_\varepsilon|^2+\varepsilon^{-2}(1-|u_\varepsilon|^2)^2\,dx
	\\
	&+\frac{1}{2R_0}\sup_{x_0\in\R^3}\int^{T^1_\varepsilon}_0\int_{B_{R_0}(x_0)}|\na^2u_\varepsilon|^2+|\na v_\varepsilon|^2+|\p_t u_\varepsilon|^2+\frac{| \na|u_\varepsilon|^2|^2}{\varepsilon^2}\,dxdt
	\\
	&\leq\frac{C\delta^2}{L^2}+\frac{C\delta^2T_\varepsilon^1}{R_0^2}.
	\end{align*}
	On the other hand, it follows from \eqref{second order estimates.1} that
	\begin{align*}\alabel{FHMPufbP5.0}
	&R_0\sup_{0\leq t\leq T^1_\varepsilon}\int_{B_{2R_0}(x_0)}(|\na v_\varepsilon(x,t)|^2+|\na^2 u_\varepsilon(x,t)|^2+\frac{1}{\varepsilon^2}|\na(|u_\varepsilon|^2)(x,t)|^2)\,dx
	\\
	&+R_0\int^{T^1_\varepsilon}_0\int_{B_{2R_0}(x_0)}|\na^2v_\varepsilon|^2+|\na \p_t u_\varepsilon|^2+|\na^3u_\varepsilon|^2+\frac{1}{\varepsilon^2}|\nabla^2|u_\varepsilon|^2|^2\,dxdt
	\\
	\leq& CR_0\int_{B_{2R_0}(x_0)}(|\na v_{0,\varepsilon}|^2+|\na^2 u_{0,\varepsilon}|^2+\frac{1}{\varepsilon^2}|\na(|u_{0,\varepsilon}|^2)(x,t)|^2)\,dx
	\\
	&+CR_0\int^{T^1_\varepsilon}_0\int_{B_{2R_0}(x_0)}(|v_\varepsilon|^2+|\na u_\varepsilon|^2)(|\na v_\varepsilon|^2+|\na^2 u_\varepsilon|^2+|\p_t u_\varepsilon|^2)\,dxdt
	\\
	&+\frac{C}{R_0}\int^{T^1_\varepsilon}_0\int_{B_{2R_0}(x_0)}|\na u_\varepsilon|^4+|v_\varepsilon|^4+|\p_t u_\varepsilon|^2+|\na^2 u_\varepsilon|^2+|\na v_\varepsilon|^2\,dxdt
	\\
	&+\frac{C}{R_0}\int^{T^1_\varepsilon}_0\int_{B_{2R_0}(x_0)}|P_\varepsilon-c_\varepsilon(t)|^2\,dxdt
	=:I_5+I_6+I_7+I_8.
	\end{align*}
	Then using the definition of $R_0$ from \eqref{2.68} and the initial condition \eqref{initial data 2} we have
	$$I_5\leq CM R_0\leq \frac{C\delta^2}{C^2_1L^2}.$$
	For $I_6$, we utilize the similar argument as the estimate of $I_3$ to obtain
	\begin{align*}
	I_6\leq&CR_0\delta^2\int^{T^1_\varepsilon}_0\int_{B_{2R_0(x_0)}}\left(|\na^2v_\varepsilon|^2+|\na \p_t u_\varepsilon|^2+|\na^3u_\varepsilon|^2\right)\,dxdt
	\\
	&+\frac{C\delta^2}{R_0}\int^{T^1_\varepsilon}_0\int_{B_{2R_0(x_0)}} (|\na v_\varepsilon|^2+|\na^2 u_\varepsilon|^2+|\p_t u_\varepsilon|^2)\,dxdt
	\\
	\leq&C\delta^2R_0\sup_{x_0\in\R^3}\int^{T^1_\varepsilon}_0\int_{B_{R(x_0)}}\left(|\na^2v_\varepsilon|^2+|\na \p_t u_\varepsilon|^2+|\na^3u_\varepsilon|^2\right)\,dxdt
	+\frac{C\delta^2}{L^2}+\frac{C\delta^2T^1_\varepsilon}{R_0^2},
	\end{align*}
	where \eqref{FHMPufbP4.2} is used in the last step.
	By the estimate of $I_3$ and \eqref{FHMPufbP4.2}, it is clear that
	$$I_7\leq\frac{C\delta^2}{L^2}+\frac{C\delta^2T^1_\varepsilon}{R_0^2}.$$
	For the pressure term $I_8$, it follows from \eqref{pressure estimate.1}, \eqref{FHMPufbP4.2} and the estimate of $I_2$ that
	\begin{align*}
	I_8\leq&\frac{C}{R_0}\sup_{x_0\in\R^3}\int^{T^1_\varepsilon}_0\int_{B_{R_0}(x_0)}\frac{\delta^2}{R_0^2}(|\na u_\varepsilon|^2+|v_\varepsilon|^2)+\delta^2(|\na^2 u_\varepsilon|^2+|\na v_\varepsilon|^2)\,dxdt
	\\
	&+\frac{C}{R_0}\sup_{x_0\in\R^3}\int^{T^1_\varepsilon}_0\int_{B_{R_0}(x_0)}(|\p_t u_\varepsilon|^2+|\na v_\varepsilon|^2)\,dxdt
	\leq\frac{C\delta^2}{L^2}+\frac{C\delta^2T^1_\varepsilon}{R_0^2}.
	\end{align*}
	Substituting estimates of $I_i$, $i=5,6,7,8$ into \eqref{FHMPufbP5.0} and taking supremum in $x_0$, we have
	\begin{align*}
	\alabel{FHMPufbP5.1}
	&R_0\sup_{0\leq t\leq T^1_\varepsilon,x_0\in\R^3}\int_{B_{R_0}(x_0)}(|\na v_\varepsilon(x,t)|^2+|\na^2 u_\varepsilon(x,t)|^2+\frac{1}{\varepsilon^2}|\na(|u_\varepsilon|^2)(x,t)|^2)\,dx
	\\
	&+R_0\sup_{x_0\in \R^3}\int^{T^1_\varepsilon}_0\int_{B_{R_0}(x_0)}|\na^2v_\varepsilon|^2+|\na \p_t u_\varepsilon|^2+|\na^3u_\varepsilon|^2+\frac{1}{\varepsilon^2}|\nabla^2|u_\varepsilon|^2|^2\,dxdt
	\\
	\leq&\frac{C\delta^2}{C^2_1L^2}+\frac{C\delta^2}{L^2}+\frac{C\delta^2T^1_\varepsilon}{R_0^2}.
	\end{align*}
	Therefore, using the Gagliardo-Nirenberg interpolation inequality, \eqref{FHMPufbP5.0} and \eqref{FHMPufbP5.1}, we obtain that
	\begin{align*}
	&\sup_{0\leq t\leq T^1_\varepsilon,x\in\R^3}\int_{B_{R_0}(x_0)} |\na u_\varepsilon|^3+|v_\varepsilon|^3\,dx
	\\
	\leq & C\sup_{0\leq t\leq T^1_\varepsilon,x\in\R^3}\left(\frac{1}{R_0}\int_{B_{R_0}(x_0)}|\na u_\varepsilon|^2+|v_\varepsilon|^2\,dx\right)^{3/2}
	\\
	&+C\sup_{0\leq t\leq T^1_\varepsilon,x\in\R^3}\left(R_0\int_{B_{R_0}(x_0)}|\na^2 u_\varepsilon|^2+|\na v_\varepsilon|^2\,dx\right)^{3/2}
	\\
	\leq&\left(\frac{C_2\delta^2}{L^2}+\frac{C_3\delta^2T^1_\varepsilon}{R_0^2}\right)^{3/2}\leq\frac{\delta^3}{2}
	\end{align*}
	from choosing $L=2\sqrt{C_2+1}$ and $T_\varepsilon^1\leq \sigma R_0^2$ with $\sigma \leq 2C_3^{-1}$. Hence, \eqref{2.70} is verified up to the uniform time $T_M=\sigma R_0^2=:CM^{-2}$.  It remains to verify \eqref{2.69} on $[0,T_M]$ for sufficiently small $\varepsilon$. First, It follows from Lemma \ref{local estimate} that, for any $s\in(0,T_M)$,
	\begin{align*}
	&\int_{\R^3}\left( |v_\varepsilon(x,s)|^2+|\na u_\varepsilon(x,s)|^2+\frac{|1-|u_\varepsilon(x,s)|^2|^2}{\varepsilon^2}\right) \,dx
	\\
	&+\int^s_0\int_{\R^3}\left(|\na v_\varepsilon|^2+|\na^2u_\varepsilon|^2+|\p_t u_\varepsilon|^2+\frac{| \na|u_\varepsilon|^2|^2}{\varepsilon^2}\right)\,dxdt
	\\
	\leq&C\int_{\R^3} |\na u_{0,\varepsilon}|^2+|v_{0,\varepsilon}|^2+\frac{|1-|u_{0,\varepsilon}|^2|^2}{\varepsilon^2} \,dx+C\int^s_0\int_{\R^3}(|v_\varepsilon|^2+|\na u_\varepsilon|^2)|\na u_\varepsilon|^2\,dxdt
	\\
	\leq &CM+C\sum^\infty_i\int^s_0\left(\int_{B_{R_0}(x_i)} (|v_\varepsilon|^{3}+|\na u_\varepsilon|^{3})\, dx\right)^{2/3}\left(\int_{B_{R_0}(x_i)}|\na u_\varepsilon|^6\, dx\right)^{ 1/3}\,dt
	\\
	\leq &CM+C\delta ^{2}\sum^\infty_i\int^s_0\int_{B_{R_0}(x_i)}|\na^2 u_\varepsilon|^2+R_0^{-2}|\na u_\varepsilon|^{2} \,dxdt
	\\
	\leq &CM+C\delta^2\int^s_0 \int_{\R^3}|\na^2 u_\varepsilon|^2\,dxdt+C\frac{\delta ^{2}T^1_\varepsilon}{R^2_0}\sup_{0\leq t\leq s}\int_{\R^3}|\na u_\varepsilon|^{2}\,dx,
	\end{align*}
	Choosing $\delta$ and $T^1_\varepsilon$ sufficiently small such that $C\delta^2\leq 1/2$ and noting $s\leq T_M=\sigma R^2_0$ for $\sigma$ sufficiently small, we have
	\begin{align*}\alabel{2.72}
	&\sup_{0\leq t\leq s}\int_{\R^3}\left(|v_\varepsilon|^2+|\D u_\varepsilon|^2+\varepsilon^{-2}(1-|u_\varepsilon|^2)^2\right)(\cdot,t)\,dx\\
	&+\int_{0}^{s}\int_{\R^3}\left(|\D v_\varepsilon|^2+|\p_t u_\varepsilon|^2+|\D^2u_\varepsilon|^2+\varepsilon^{-2}|\na(|u_\varepsilon|)^2|^2\right)\,dxdt
	\leq CM.\nonumber
	\end{align*}
	 Then, from Lemma \ref{second order estimates}, we derive
	\begin{align*}
	&\int_{\R^3}|\na^2 u_\varepsilon|^2+|\na v_\varepsilon|^2+\frac{|\na|u_\varepsilon|^2|^2}{\varepsilon^2}\,dx
	\\
	&+\int^{s}_0\int_{\R^3}\left(|\na^2v_\varepsilon|^2+|\na \p_t u_\varepsilon|^2+|\na^3u_\varepsilon|^2+\frac{1}{\varepsilon^2}|\na^2|u_\varepsilon|^2|^2\right)\,dxdt
	\\
	\leq &CM +C\int^{s}_0\int_{\R^3}(|\na u_\varepsilon|^2+|v_\varepsilon|^2)(|\na v_\varepsilon|^2+|\na^2 u_\varepsilon|^2+|\p_t u_\varepsilon|^2)\,dxdt
	\\
	\leq &CM+C\delta^2\int^{s}_0\int_{\R^3}\left(|\na^2v_\varepsilon|^2+|\na \p_t u_\varepsilon|^2+|\na^3u_\varepsilon|^2\right)\,dxdt
	\\
	&+\frac{C\delta^2}{R^2_0}\int^{s}_0\int_{\R^3} (|\na v_\varepsilon|^2+|\na^2 u_\varepsilon|^2+|\p_t u_\varepsilon|^2)\,dxdt.
	\end{align*}
	Noting that $C\delta^2<\frac 12$, and using \eqref{2.72} we find
	\begin{align*}
	&\int_{\R^3}|\na^2 u_\varepsilon|^2+|\na v_\varepsilon|^2+\frac{|\na(|u_\varepsilon|)^2|^2}{\varepsilon^2}\,dx\alabel{FHMPufbP6.1}
	\\
	&+\int^{s}_0\int_{\R^3}\left(|\na^2v_\varepsilon|^2+|\na \p_t u_\varepsilon|^2+|\na^3u_\varepsilon|^2+\frac{1}{\varepsilon^2}|\na^2|u_\varepsilon|^2|^2\right)\,dxdt
	\\
	\leq &CM\left(1+\frac{\delta^2}{R^2_0}\right).
	\end{align*}
	Therefore, we obtain from the Gagliardo--Nirenberg interpolation that
	\begin{align*}
	&\|1-|u_\varepsilon|^2\|_{L^\infty(\R^3)}\leq C(\varepsilon^2 M)^{\frac18}(\|\nabla^2 u_\varepsilon\|^{\frac34}_{L^2(\R^3)}+\|\na u_\varepsilon\|^{\frac32}_{L^4(\R^3)})
	\\
	\leq&\varepsilon^{\frac14} [C^4(M^5+\sqrt{(M^2+M^5)^2})]^{\frac14}\leq\frac{9}{16},
	\end{align*}
	for all $\varepsilon<\varepsilon_M=:\frac{3^5}{2^6C^4(M^5+M^2)}$ which gives \eqref{2.69}. In the view of \eqref{2.72} and \eqref{FHMPufbP6.1}, we have proved the assertion \eqref{2.67}.
	\end{proof}

Now we can give the proof of local existence of strong solutions to \eqref{E-L1}-\eqref{E-L3} stated in Theorem \ref{thm1}.

\begin{proof}[\bf Proof of Theorem \ref{thm1}]
	By Proposition \ref{prop2.1}, there exist two positive constants $T_0$   and  $\varepsilon_\ast$  independent of $\varepsilon$ such that  for any $\varepsilon\leq \varepsilon_\ast$, the strong solutions $(u_\varepsilon,v_\varepsilon)$ to \eqref{G-L1}-\eqref{G-L3} satisfy
	\begin{align*}
	&(v_\varepsilon,u_\varepsilon)\text{ in } L^\infty(0,T_0;H^1(\R^3)\times H^2_b(\R^3))\cap L^2(0,T_0;H^2(\R^3)\times H^3_b(\R^3)),\\
	&(\p_t v_\varepsilon,\p_tu_\varepsilon)\text{ in } L^2(0,T_0;L^2(\R^3)\times H^1(\R^3))
	\end{align*}
and  \eqref{2.67} holds. It is clear that multiplying $(u_\varepsilon-b)$ with \eqref{G-L3} and using estimates in Proposition \ref{prop2.1}, we  find $\|(u_\varepsilon-b)\|_{L^\infty(0,T_0;L^2(\R^3))}<C$.

Since the pressure $P_\varepsilon$ satisfies
	\eqref{pressure estimateP1.1}, it follows from using the elliptic estimate and the Sobolev inequality that
	\begin{align*}
	\int_{0}^{T_0}\int_{\R^3}|P_\varepsilon|^2\,dxdt\leq \int_{0}^{T_0}\int_{\R^3}|\na u_\varepsilon|^4+|v_\varepsilon|^4+|\p_t u_\varepsilon|^2+|\na v_\varepsilon|^2\,dxdt\leq C
	\end{align*}
	and
	\begin{align*}
	&\int_{0}^{T_0}\int_{\R^3}|\D P_\varepsilon|^2\,dxdt\leq C\int_{0}^{T_0}\int_{\R^3}\left(|\D\sigma_\varepsilon^E|^2+|\D\sigma_\varepsilon^L|^2+|\D(v_\varepsilon\otimes v_\varepsilon)|^2\right)\,dxdt\\
	\leq& C\int_{0}^{T_0}\|\D u_\varepsilon\|_{L^2}\|\D u_\varepsilon\|_{L^6}\left(\|\D u_\varepsilon\|_{L^6}^2+\|\D v_\varepsilon\|_{L^6}^2+\|\p_tu_\varepsilon\|_{L^6}^2\right)dt\\
	&+C\int_{0}^{T_0}\left(\|v_\varepsilon\|_{L^2}\|v_\varepsilon\|_{L^6}\|\D v_\varepsilon\|_{L^6}^2+\|\D^2 v_\varepsilon\|_{L^2}^2+\|\D\p_t u_\varepsilon\|_{L^2}^2+\|\D u_\varepsilon\|_{L^6}^6\right)dt\leq C.
	\end{align*}
	Then, by  the Aubin-Lions Lemma, there is a subsequence, still denoted by\\ $(v_\varepsilon,u_\varepsilon,P_\varepsilon)$ and a solution $(v,u,p)$ such that for any $R\in(0,\infty)$
	\begin{align*}
	&v_\varepsilon\rightarrow v\,\,\text{in  } L^2(0,T_0;H^1(B_R))\cap C([0,T_0];L^2(B_R))\\
	&v_\varepsilon\rightharpoonup v\,\,\text{in } L^2(0,T_0; H^2(\R^3)),\quad \p_t v_\varepsilon\rightharpoonup \p_t v\,\,\text{in  } L^2(\R^3\times(0,T_0)),\\
	&u_\varepsilon\rightarrow u\,\,\text{in  }L^2(0,T_0;H^2(B_R))\cap C([0,T_0];H^1(B_R)),\\
	&u_\varepsilon\rightharpoonup u\,\,\text{in }L^2(0,T_0;H^3(\R^3)),\quad\p_t u_\varepsilon\rightharpoonup \p_t u\,\,\text{in  }L^2(0,T_0;H^1(\R^3)),\\
	&P_\varepsilon\rightharpoonup P\,\,\text{in }L^2(0,T_0;H^1(\R^3)),
	\end{align*}
	where  $|u|=1$ due to  $\sup_{0\leq t\leq T_0}\int_{\R^3}\frac{(1-|u_\varepsilon|^2)^2}{\varepsilon^2}\leq C$. It can be  checked that $(v,u)$ satisfies \eqref{E-L1}-\eqref{E-L3} based on the above compactness, see \cite{HLX} for more details. Indeed,  \eqref{E-L3} follows from taking cross product with $u_\varepsilon$ twice in \eqref{G-L3} and standard weak convergence argument. The uniqueness of strong solutions to \eqref{E-L1}-\eqref{E-L3} follows from the $L^2$ estimates for the difference between two solutions, we refer to \cite{HX} for more details.

	Next, we check the characterization of the  maximal existence $T^\ast$.  Let  $(u,v)$ be the solution to the Ericksen-Leslie system \eqref{E-L1}-\eqref{E-L3} in $\R^3\times [0, T)$ with $T<T^*$.   Then we have
	\begin{align}
	&\frac{d}{dt}\int_{\R^3}(|\D v|^2+|\D^2 u|^2+\D_{\alpha\beta}^2u_\varepsilon^iW_{p_\alpha^ip_\g^j}\D_{\beta\g}^2u_\varepsilon^j)\,dx\\
&+\int_{\R^3}\left(|\D^2v|^2+|\p_t\D u|^2+|\D^3u|^2\right) \,dx \nonumber\\
	\leq& C\int_{\R^3}(|v|^2+|\D u|^2)(|\D v|^2+|\D^2 u|^2+|\p_t u|^2)\,dx\nonumber
	\end{align}  provided that
\[\sup_{0\leq t\leq T, x\in\R^3}\int_{B_{R}(x)}|\nabla u|^3+|v|^3\,dx\leq \varepsilon_0	
\]
 for some   $\varepsilon_0>0$ and  some $R>0$.

	By a standard open cover $\{ B_{R}(x_i)\}_{i=1}^{\infty}$  of  $\R^3$ (at each $x\in \R^3$, there is at most a fixed  number of  intersection of open balls), we obtain
\begin{align}
&\int_{\R^3}(|\D v(T)|^2+|\D^2 u(T)|^2\;dx+\int_{0}^{T}\int_{\R^3}\left(|\D^2v|^2+|\p_t\D u|^2+|\D^3u|^2\right) \,dxdt\\
\leq& C\hspace{-0.5ex}+\hspace{-0.5ex}C\int_{0}^{T}\sum_{i}\left[\int_{B_{R}(x_i)}\hspace{-0.5ex}(|v|^3+|\D u|^3)dx\right]^\frac{2}{3}\left[\int_{B_R(x_i)}\hspace{-0.5ex}(|\D v|^6+|\D^2 u|^6+|\p_t u|^6)dx\right]^\frac{1}{3}\hspace{-1ex}dt\nonumber\\
\leq& C+C\varepsilon_0^\frac{2}{3}\int_{0}^{T}\int_{\R^3}(|\nabla ^3u|^2+|\nabla^2v|^2+|\nabla\p_t u|^2)+\frac{1}{R^2}(|\nabla ^2u|^2+|\nabla v|^2+|\p_t u|^2)\,dxdt.\nonumber
\end{align}
Choosing $\varepsilon_0$ sufficiently small,  $(u(T^\ast), v(T^\ast))\in H_b^2(\R^3)\times H^1(\R^3)$ and by the local existence result, the solution can be extended passing $T$, so $T^*$ is the maximal existence time.
\end{proof}

\section{Smooth convergence of the Ginzburg-Landau system}
In this section, we prove that the Ginzburg-Landau system smoothly converge to the Ericksen-Leslie system once away from the initial time and  until the maximal existence time.  First, we derive higher order uniform estimates of solutions to the Ginzburg-Landau system. To do that, the following lemma, which are essentially from the Gagliardo–Nirenberg interpolation inequality (c.f. \cite{BKM}, \cite{WZZ}), will be frequently used.
\begin{lemma}\label{pro-com estimates}
	For any index $\alpha,\beta,\g\in \mathbb{N}^3$, it holds that
	\begin{align*}
	&\|\nabla^\a(fg)\|_{L^2}\leq C\sum_{|\g|=|\a|}\left(\|f\|_{L^\infty}\|\D^\g g\|_{L^2}+\|g\|_{L^\infty}\|\D^\g f\|_{L^2}\right),\\
	&\|[\nabla^\a,f]\nabla^\b g\|_{L^2}\leq C\left(\sum_{|\g|=|\a|+|\b|}\|g\|_{L^\infty}\|\nabla^\g f\|_{L^2}+\sum_{|\g|=|\a|+|\b|-1}\|\nabla f\|_{L^\infty}\|\nabla^\g g\|_{L^2}\right),\nonumber
	\end{align*}
	where the commutator $[\nabla^\a,f]\nabla^\b g$ is defined by
	\begin{equation*}
	[\nabla^\a,f]\nabla^\b g=\nabla^\a(f\nabla^\b g)-f\nabla^\a(\nabla^\b g).
	\end{equation*}
\end{lemma}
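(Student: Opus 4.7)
The plan is to prove both inequalities by the standard Moser/Kato-Ponce recipe: expand the high-order derivative with the Leibniz rule, apply H\"older with suitably chosen exponents, and then convert the intermediate $L^p$ norms back to $L^\infty$ and $L^2$ norms via the Gagliardo-Nirenberg interpolation inequality
\begin{equation*}
\|\nabla^k h\|_{L^p(\R^3)}\leq C\,\|h\|_{L^\infty(\R^3)}^{1-\theta}\,\|\nabla^m h\|_{L^2(\R^3)}^{\theta},\qquad \tfrac{1}{p}=\tfrac{1-\theta}{\infty}+\theta\bigl(\tfrac{1}{2}-\tfrac{m-k}{3}\bigr),\,\,\theta=\tfrac{k}{m},
\end{equation*}
valid for integers $0\le k\le m$. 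This is what reduces the mixed norms produced by H\"older to the right-hand sides that only contain the full top-order $L^2$ norm and an $L^\infty$ norm.

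For the first inequality I would apply the Leibniz rule
$\nabla^{\alpha}(fg)=\sum_{\beta\le \alpha}\binom{\alpha}{\beta}\nabla^{\beta}f\,\nabla^{\alpha-\beta}g$,
and estimate each term in $L^2$ by H\"older with exponents $p_1=\tfrac{2|\alpha|}{|\beta|}$, $p_2=\tfrac{2|\alpha|}{|\alpha-\beta|}$ (and the obvious modification at the endpoints $\beta=0$ and $\beta=\alpha$, which are already in the desired form). Interpolating $\|\nabla^{\beta}f\|_{L^{p_1}}\le C\|f\|_{L^\infty}^{1-|\beta|/|\alpha|}\|\nabla^{\alpha}f\|_{L^2}^{|\beta|/|\alpha|}$ and the symmetric bound for $g$, then collecting by Young's inequality $ab\le \tfrac{a^{q}}{q}+\tfrac{b^{q'}}{q'}$ with $q=|\alpha|/|\beta|$, I obtain $\|f\|_{L^\infty}\|\nabla^{\alpha}g\|_{L^2}+\|g\|_{L^\infty}\|\nabla^{\alpha}f\|_{L^2}$ as a bound for each mixed term, which is exactly the stated inequality.

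For the commutator estimate I would use the crucial cancellation
\begin{equation*}
[\nabla^{\alpha},f]\nabla^{\beta}g=\nabla^{\alpha}(f\nabla^{\beta}g)-f\,\nabla^{\alpha+\beta}g=\sum_{\substack{\mu\le\alpha\\ \mu\ne 0}}\binom{\alpha}{\mu}\nabla^{\mu}f\,\nabla^{\alpha+\beta-\mu}g,
\end{equation*}
so every surviving term carries at least one derivative on $f$. On each term I again apply H\"older in $L^2$ with exponents tailored to the order $|\mu|\ge 1$ on $f$ and $|\alpha+\beta-\mu|\le |\alpha|+|\beta|-1$ on $g$, and then interpolate: $\|\nabla^{\mu}f\|_{L^{p_1}}\le C\|\nabla f\|_{L^\infty}^{1-\theta_1}\|\nabla^{|\alpha|+|\beta|}f\|_{L^2}^{\theta_1}$ with $\theta_1=(|\mu|-1)/(|\alpha|+|\beta|-1)$, and $\|\nabla^{\alpha+\beta-\mu}g\|_{L^{p_2}}\le C\|g\|_{L^\infty}^{1-\theta_2}\|\nabla^{|\alpha|+|\beta|-1}g\|_{L^2}^{\theta_2}$ with $\theta_2=(|\alpha|+|\beta|-|\mu|)/(|\alpha|+|\beta|-1)$. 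Since $(1-\theta_1)+(1-\theta_2)=1$, a final application of Young's inequality produces the two terms on the right-hand side of the commutator estimate.

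The only real bookkeeping obstacle is to verify that the H\"older exponents and the corresponding interpolation parameters admissibility in dimension three are compatible for every multi-index $\mu$ that appears; this is routine because the shift by one derivative onto $f$ (gained from the cancellation) makes $\theta_1$ and $\theta_2$ a convex decomposition of unity, so no endpoint case ever fails. All other steps are mechanical and the constants $C$ depend only on $|\alpha|$, $|\beta|$ and the ambient dimension.
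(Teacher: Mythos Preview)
Your argument is correct and is precisely the standard Moser/Kato--Ponce derivation via Leibniz expansion, H\"older, Gagliardo--Nirenberg interpolation, and Young's inequality. The paper does not actually give a proof of this lemma: it simply records the inequalities as known facts ``essentially from the Gagliardo--Nirenberg interpolation inequality'' and cites \cite{BKM} and \cite{WZZ}, which carry out exactly the computation you sketched. So your approach coincides with what the paper defers to.

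One small cosmetic remark: the scaling relation you wrote for the Gagliardo--Nirenberg inequality, $\tfrac{1}{p}=\theta\bigl(\tfrac{1}{2}-\tfrac{m-k}{3}\bigr)$ with $\theta=k/m$, is not quite consistent with the exponent $p=2m/k$ you actually use (and which is the correct one). The correct relation from scaling is $\tfrac{1}{p}=\tfrac{k-\theta m}{3}+\tfrac{\theta}{2}$, which for $\theta=k/m$ indeed gives $p=2m/k$. This is only a slip in the displayed formula; the exponents you plug into H\"older and the convexity identity $\theta_1+\theta_2=1$ that drives the Young step are all correct, so the argument goes through as written.
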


The following lemma shows that the solution to the Ginzburg-Landau system obtained in Proposition \ref{prop2.1} is uniformly smooth once away from the initial time.
\begin{lemma}\label{higher estimates lemma}
	Let $(v_\varepsilon, u_\varepsilon)$ be the strong solution, obtained in Proposition \ref{prop2.1}, to the system \eqref{G-L1}-\eqref{G-L3} in $\R^3\times[0, T_M]$.
	Then it holds for any $\tau>0$, $s\in(\tau,T_M]$ and any integer $l\geq 0$ that
\begin{align}\label{higher estimates}
&\int_{\R^3}\left(|\D^l v_\varepsilon(s)|^2+|\D^{l+1}u_\varepsilon(s)|^2+\frac{|\D^l(|u_\varepsilon(s)|^2)|^2}{\varepsilon^2}\right)\;dx\\
&+\int_{\tau}^{s}\int_{\R^3}\left(|\D^{l+1}v_\varepsilon|^2+|\D^l\p_t u_\varepsilon|^2+|\D^{l+2}u_\varepsilon|^2+\frac{|\D^{l+1}(|u_\varepsilon|^2)|^2}{\varepsilon^2}\right)\;dxdt\nonumber\\
\leq& C(\tau,l,M,s),\nonumber
\end{align}
where $C$ is a positive constant independent of $\varepsilon$. For  simplicity,  $\D^l$ is denoted as multi-derivatives with index $\alpha$ of order $l$.
\end{lemma}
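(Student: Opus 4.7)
The plan is to proceed by induction on $l$, using a time-weighted energy method of Schauder type so that we do not need higher regularity of the initial data. The base case $l=0$ is already contained in Proposition \ref{prop2.1} (via Lemma \ref{local estimate} and Lemma \ref{second order estimates}), so the bound \eqref{higher estimates} holds at $l=0$ uniformly on $[0,T_M]$ with a constant depending only on $M$. The base case $l=1$ is essentially covered by the second order estimates in Lemma \ref{second order estimates} combined with Proposition \ref{prop2.1}; the constant there is already independent of $\tau$.

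For the induction step, assume \eqref{higher estimates} holds for all orders $\leq l-1$ on $(\tau/2, T_M]$. Apply $\nabla^l$ to \eqref{G-L1} and $\nabla^l$ to \eqref{G-L3}, and test the first by $\eta(t)\nabla^l v_\varepsilon$ (using $\nabla \cdot v_\varepsilon=0$ to eliminate the pressure after integration by parts in the standard way) and the second by $\eta(t)\nabla^l h_\varepsilon$, where $\eta(t) = \min\{(t/\tau)^{l},1\}$ is a smooth time cut-off. The combination mimics the proof of Lemma \ref{second order estimates}: the $\gamma_1 N$ and $\gamma_2 Au$ cross terms cancel up to lower order, and the leading dissipation produces $\int |\nabla^{l+1}v_\varepsilon|^2 + |\nabla^{l+2}u_\varepsilon|^2 + |\nabla^l\partial_t u_\varepsilon|^2$ with uniform constants. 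To control the singular term $\nabla^l\bigl(\varepsilon^{-2}u_\varepsilon(1-|u_\varepsilon|^2)\bigr)$ that appears in $\nabla^l h_\varepsilon$, I will test instead with $-\Delta$ of the relevant quantity (as was done in obtaining \eqref{2.56} and \eqref{2.43}); this generates the favourable dissipation $\varepsilon^{-2}|\nabla^{l+1}(|u_\varepsilon|^2)|^2$ and, crucially, lets me recycle the bootstrap bound
\[
\left|\frac{1-|u_\varepsilon|^2}{\varepsilon^2}\right|\leq C\bigl(|\partial_t u_\varepsilon|+|v_\varepsilon||\nabla u_\varepsilon|+|\nabla v_\varepsilon|+|\nabla u_\varepsilon|^2+|\nabla^2 u_\varepsilon|\bigr)
\]
together with its differentiated versions (analogous to \eqref{estimate2}) derived straight from \eqref{G-L3}. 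All nonlinear remainders are handled by Lemma \ref{pro-com estimates}: each product $\nabla^{\alpha}f\cdot\nabla^{\beta}g$ with $|\alpha|+|\beta|\leq l+2$ is distributed so that at most one factor carries the top derivative; the other factors get $L^\infty$-bounds via Gagliardo-Nirenberg interpolation between the induction hypothesis at level $l-1$ and the $L^2$-bound on the top derivative, which is absorbed by the dissipation after choosing the parameters small.

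The time weight $\eta(t)$ produces an extra term $\eta'(t)\int (|\nabla^l v_\varepsilon|^2+|\nabla^{l+1}u_\varepsilon|^2+\varepsilon^{-2}|\nabla^l(|u_\varepsilon|^2)|^2)\,dx$ on the left-hand side, which by the induction hypothesis is integrable on $(\tau/2,T_M]$ with a constant depending on $\tau$, $l$, $M$; this is what transfers the regularity from $t=\tau/2$ to $t=\tau$ without requiring any $H^{l+1}$-control of the initial data. Integrating from $\tau/2$ to $s$ and using $\eta(s)=1$ then yields \eqref{higher estimates}.

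The main obstacle will be the singular nonlinear terms of the form $\varepsilon^{-2}\nabla^l\bigl(u_\varepsilon(1-|u_\varepsilon|^2)\bigr)\cdot \nabla^l \Delta u_\varepsilon$ appearing after testing: they are the highest-order analogues of $K_{0,3}$ in the proof of Lemma \ref{second order estimates}. Expanding via the product rule produces one dissipative term $\tfrac1{2\varepsilon^2}|\nabla^{l+1}(|u_\varepsilon|^2)|^2$ plus remainders; each remainder contains a factor $\varepsilon^{-2}\nabla^k(1-|u_\varepsilon|^2)$ with $k\leq l$ that must be estimated by iterating \eqref{G-L3} so that the $\varepsilon^{-2}$ is absorbed, with only $\varepsilon$-independent norms of $(v_\varepsilon,u_\varepsilon)$ remaining. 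Once this bootstrap is in place the rest of the argument is a routine induction, and the uniformity in $\varepsilon$ is preserved throughout.
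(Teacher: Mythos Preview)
Your outline is essentially the paper's own strategy: induction on $l$, with the inductive step carried out by applying $\nabla^{k+1}$ to \eqref{G-L1} and \eqref{G-L3} and testing against $\nabla^{k+1}v_\varepsilon$, $\nabla^{k+1}h_\varepsilon$, $\nabla^{k+1}\partial_t u_\varepsilon$, and $\nabla^{k+1}\Delta u_\varepsilon$, then summing with suitable weights; the singular factor $\varepsilon^{-2}(1-|u_\varepsilon|^2)$ and its derivatives are rewritten via \eqref{G-L3} exactly as you propose, and the commutators are controlled by Lemma~\ref{pro-com estimates}.

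There are two differences worth noting. First, the paper does not use a time weight $\eta(t)$: instead it applies the mean value theorem to the level-$k$ dissipation integral to find a time $\tau_\varepsilon\in(\tau,2\tau)$ at which the level-$(k+1)$ energy is finite, and then runs Gronwall from $\tau_\varepsilon$. Your weighted scheme is an equally valid (and arguably cleaner) way to avoid higher regularity at $t=0$. Second, one commutator term you pass over deserves explicit care: from $[\nabla^\nu,u_\varepsilon]N_\varepsilon$ one gets a contribution $\|\,|\nabla u_\varepsilon|\,|\nabla^{k}\partial_t u_\varepsilon|\,\|_{L^2}^2$ for which neither factor is a priori in $L^\infty$. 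The paper absorbs it into the dissipation by invoking the local $L^3$ smallness \eqref{higher estimates P1.0} (inherited from Proposition~\ref{prop2.1}) together with a covering argument, obtaining $C\delta^2\|\nabla^{k+1}\partial_t u_\varepsilon\|_{L^2}^2$ with $\delta$ small. Your phrase ``choosing the parameters small'' should point to this mechanism; pure Gagliardo--Nirenberg interpolation also works (split as $L^6\times L^3$ and use $\|\nabla^k\partial_t u_\varepsilon\|_{L^3}^2\le C\|\nabla^k\partial_t u_\varepsilon\|_{L^2}\|\nabla^{k+1}\partial_t u_\varepsilon\|_{L^2}$), but you should make one of these two routes explicit.
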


\begin{proof}
		We prove this lemma by induction. In the view of Proposition \ref{prop2.1}, \eqref{higher estimates} holds for $l=0, 1$. Assume that \eqref{higher estimates} holds for  $l=0,1,\cdots, k$ with $k\geq 1$. Next, we show that \eqref{higher estimates} holds for  $l=k+1$.

		 Firstly, we define the following energy and dissipation  terms
		\begin{equation}
		\mathcal{E}_m:=\|\na^mv_\varepsilon\|_{L^2}^2+\|\na^{m+1}u_\varepsilon\|_{L^2}^2,\quad\mathcal{D}_{m}:=\mathcal{E}_{m+1}+\|\na^m\p_t u_\varepsilon\|_{L^2}^2
		\end{equation}
		for any integer $m$, and
		\begin{align}
		\Lambda_\infty:=\|v_\varepsilon\|_{L^\infty}^2+\|\na u_\varepsilon\|_{L^\infty}^2
		\end{align}
		to simplify notations in the sequel.
		
		Now we prove \eqref{higher estimates} for $l=k+1.$  Applying $\na^\nu$  with index $\nu$ of order $k+1$  to \eqref{G-L1}, multiplying the resulting equation by $\na^\nu v_\varepsilon$, integrating over $\R^3$ and using \eqref{G-L2}, we have
			\begin{align*}\alabel{2.57}
			\frac{1}{2}\frac{d}{dt }\int_{\R^3}|\na^\nu v_\varepsilon|^2  \,dx
			=&-\int_{\R^3}\na^\nu\sigma_{\varepsilon ij}^L\na^\nu \D_jv_\varepsilon^i  \,dx +\int_{\R^3}\na^\nu(v_\varepsilon^j v_\varepsilon^i)\na^\nu\D_j v_\varepsilon^i  \,dx \\
			&+\int_{\R^3}\na^\nu(\D_iu_\varepsilon^\a W_{p_j^\a})\na^\nu\D_j v_\varepsilon^i  \,dx =:I_1+I_2+I_3.\nonumber
			\end{align*}
			To estimate $I_1$, we write
			\begin{equation}
			I_1=-\int_{\R^3}\mathcal{T}_{ij,\nu}^L\na^\nu\na_jv_\varepsilon^i\,dx-\int_{\R^3}\mathcal{R}_{ij,\nu}^L\na^\nu\na_jv_\varepsilon^i\,dx=:I_{1,1}+I_{1,2},
			\end{equation}
			where  $\mathcal{T}_{\varepsilon ij,\nu}^L$ is the highest order derivatives   defined by
			\begin{align*}
			\mathcal{T}_{ij,\nu}^L:=&\alpha_1u_\varepsilon^l\na^\nu A_{\varepsilon lm} u^mu_\varepsilon^iu_\varepsilon^j+\alpha_2\na^\nu N^i_\varepsilon u_\varepsilon^j+\alpha_3u_\varepsilon^i\na^\nu N^j_\varepsilon\\
			&+\alpha_4\na^\nu A_{\varepsilon ij}+\alpha_5\na^\nu A_{\varepsilon il}u_\varepsilon^lu_\varepsilon^j+\alpha_6u_\varepsilon^i\na^\nu A_{\varepsilon jl}u_\varepsilon^l
			\end{align*}
			and the remainder $\mathcal{R}_{ij,\nu}^L$ is given by
			\begin{align*}
			\mathcal{R}_{ij,\nu}^L:=&\alpha_1[\nabla^\nu, u_\varepsilon^lu_\varepsilon^mu_\varepsilon^iu_\varepsilon^j]A_{\varepsilon lm}+\alpha_2[\nabla^\nu,u_\varepsilon^j]N^i_\varepsilon+\alpha_3[\nabla^\nu,u_\varepsilon^i]N^j_\varepsilon\\
			&+\alpha_5[\na^\nu,u_\varepsilon^lu_\varepsilon^j]A_{\varepsilon il}+\alpha_6[\na^\nu,u_\varepsilon^iu_\varepsilon^l]A_{\varepsilon jl}.
			\end{align*}
			Note that $u_\varepsilon^i\na^\nu\Omega_{\varepsilon ij}u_\varepsilon^j=\nabla^\nu A_{\varepsilon ij}\Omega_{\varepsilon ij}=0$.   By a similar argument to one in \eqref{2.3}, we obtain
			\begin{align*}\alabel{2.58a}
						I_{1,1}=&-\int_{\R^3}\left(\alpha_1|u_\varepsilon^i\na^\nu A_{\varepsilon ij} u_\varepsilon^j|^2+\alpha_4|\na^\nu A_\varepsilon |^2+\beta|\na^\nu A_\varepsilon u_\varepsilon|^2\right)  \,dx
						\\
						&+\int_{\R^3}\na^\nu h_\varepsilon ^i\left(\na^\nu\Omega_{\varepsilon ij}u_\varepsilon^j-\frac{\g_2}{\g_1}\na^\nu A_{\varepsilon ij}u_\varepsilon^j\right)  \,dx
						\\
						&+\int_{\R^3}[\na^\nu, u_\varepsilon^l]A_{\varepsilon il}\left(\frac{\g_2^2}{\g_1}\na^{\nu}A_{\varepsilon ij}u_\varepsilon^j-\g_2\na^{\nu}\Omega_{\varepsilon ij}u_\varepsilon^j\right)  \,dx\\
						\leq&-\int_{\R^3}\left(\alpha_1|u_\varepsilon^i\na^\nu A_{\varepsilon ij} u_\varepsilon^j|^2+\alpha_4|\na^\nu A_\varepsilon |^2+\beta|\na^\nu A_\varepsilon u_\varepsilon|^2\right)  \,dx
						\\
						&+\int_{\R^3}\na^\nu h_\varepsilon ^i\left(\na^\nu\Omega_{\varepsilon ij}u_\varepsilon^j-\frac{\g_2}{\g_1}\na^\nu A_{\varepsilon ij}u_\varepsilon^j\right)  \,dx
						\\
						&+\delta_1\|\na^{k+2}v_\varepsilon\|_{L^2}^2+C\|u_\varepsilon\|_{L^\infty}^2\|[\na^\nu, u_\varepsilon]\nabla v_\varepsilon\|_{L^2}^2 \\
						\leq&-\int_{\R^3}\left(\alpha_1|u_\varepsilon^i\na^\nu A_{\varepsilon ij} u_\varepsilon^j|^2+\alpha_4|\na^\nu A_\varepsilon |^2+\beta|\na^\nu A_\varepsilon u_\varepsilon|^2\right)  \,dx
						\\
						&+\int_{\R^3}\na^\nu h_\varepsilon ^i\left(\na^\nu\Omega_{\varepsilon ij}u_\varepsilon^j-\frac{\g_2}{\g_1}\na^\nu A_{\varepsilon ij}u_\varepsilon^j\right)  \,dx
						\\
						&+\delta_1\|\na^{k+2}v_\varepsilon\|_{L^2}^2+C\left(\|v_\varepsilon\|_{L^\infty}^2\|\nabla^{k+2}u_\varepsilon\|_{L^2}^2+\|\nabla u_\varepsilon\|_{L^\infty}^2\|\nabla^{k+1}v_\varepsilon\|_{L^2}^2\right),
						\end{align*}
						where  we have used the estimate from Lemma \ref{pro-com estimates} that
						\begin{align}\label{4-8}
						\|[\na^\nu, u_\varepsilon]\nabla v_\varepsilon\|_{L^2}\leq C(\|v_\varepsilon\|_{L^\infty}\|\nabla^{k+2}u_\varepsilon\|_{L^2}+\|\nabla u_\varepsilon\|_{L^\infty}\|\nabla^{k+1}v_\varepsilon\|_{L^2}).
						\end{align}
						For $I_{1,2}$ involving $\mathcal{R}_{ij,\nu}^L$, we first estimate $\|\mathcal{R}_{ij,\nu}^L\|_{L^2}$.
						By using Lemma \ref{pro-com estimates} several times, we obtain
						\begin{align*}
						\|\alpha_1[\nabla^\nu, u_\varepsilon^lu_\varepsilon^mu_\varepsilon^iu_\varepsilon^j]A_{\varepsilon lm}\|_{L^2}\leq& C\|v_\varepsilon\|_{L^\infty}\|\nabla^{k+2}(u_\varepsilon\#u_\varepsilon\#u_\varepsilon\#u_\varepsilon)\|_{L^2}\\
						&+C\|\nabla(u_\varepsilon\#u_\varepsilon\#u_\varepsilon\#u_\varepsilon)\|_{L^\infty}\|\nabla^{k+1}v_\varepsilon\|_{L^2}\\
						\leq&C(\|v_\varepsilon\|_{L^\infty}\|\nabla^{k+2}u_\varepsilon\|_{L^2}+\|\nabla u_\varepsilon\|_{L^\infty}\|\nabla^{k+1}v_\varepsilon\|_{L^2}),
						\end{align*}
						where the notation $\#$ denotes the multi-linear map with constant coefficients in the sequel. Similarly,
						\begin{align*}
						&\|\alpha_5[\na^\nu,u_\varepsilon^lu_\varepsilon^j]A_{\varepsilon il}+\alpha_6[\na^\nu,u_\varepsilon^iu_\varepsilon^l]A_{\varepsilon jl}\|_{L^2}\\
						\leq& C(\|v_\varepsilon\|_{L^\infty}\|\nabla^{k+2}u_\varepsilon\|_{L^2}+\|\nabla u_\varepsilon\|_{L^\infty}\|\nabla^{k+1}v_\varepsilon\|_{L^2}).
						\end{align*}
						For the commutator involving $N_\varepsilon$ in $\mathcal{R}_{ij,\nu}^L$, we first write
						\begin{align}\label{com-n1}
						[\nabla^\nu,u_\varepsilon^j]N^i_\varepsilon=&[\nabla^\nu,u_\varepsilon^j]\p_t u_\varepsilon^i+[\nabla^\nu,u_\varepsilon^j](v_\varepsilon^l\nabla_lu_\varepsilon^i)-[\nabla^\nu,u_\varepsilon^j](\Omega_{\varepsilon li}u_\varepsilon^l)\\
						=&\sum_{|\mu|=k}\begin{pmatrix}
						\nu \\ \mu
						\end{pmatrix}\na^{\nu-\mu} u_\varepsilon^j\nabla^{\mu}\p_tu_\varepsilon^i+\sum_{|\mu|=0}^{k-1}\begin{pmatrix}
						\nu \\ \mu
						\end{pmatrix}\na^{\nu-\mu} u_\varepsilon^j\nabla^{\mu}\p_tu_\varepsilon^i\nonumber\\
						&+[\nabla^\nu,u_\varepsilon^j]\nabla_l (v_\varepsilon^lu_\varepsilon^i)-[\na^\nu,u_\varepsilon^ju_\varepsilon^l]\Omega_{\varepsilon li}+u_\varepsilon^j[\na^\nu, u_\varepsilon^l]\Omega_{\varepsilon li},\nonumber
						\end{align}
						where we have used \eqref{G-L2} and the fact that
						\begin{align}\label{com123}
						[\na^\nu, f_1](f_2f_3)=[\na^\nu,f_1f_2]f_3-f_1[\nabla^\nu,f_2]f_3
						\end{align}
						for any functions $f_1,f_2$ and $f_3$. Then, we apply Lemma \ref{pro-com estimates} to yield
						\begin{align}\label{com-n2}
						\|[\nabla^\nu,u_\varepsilon^j]\nabla_l (v_\varepsilon^lu_\varepsilon^i)\|_{L^2}\leq& C\left(\|v_\varepsilon u_\varepsilon\|_{L^\infty}\|\na^{k+2}u_\varepsilon\|_{L^2}+\|\nabla u_\varepsilon\|_{L^\infty}\|\nabla^{k+1}(v_\varepsilon u_\varepsilon)\|_{L^2}\right)\\
						\leq& C\|v_\varepsilon\|_{L^\infty}\|\na^{k+2}u_\varepsilon\|_{L^2}+C\|\nabla u_\varepsilon\|_{L^\infty}\|\nabla^{k+1}v_\varepsilon\|_{L^2}\nonumber\\
						&+C\|\nabla u_\varepsilon\|_{L^\infty}\|v_\varepsilon\|_{L^\infty}\|\nabla^{k+1} u_\varepsilon\|_{L^2}\nonumber
						\end{align}
						and
						\begin{align}\label{com-n3}
						&\|-[\na^\nu,u_\varepsilon^ju_\varepsilon^l]\Omega_{\varepsilon li}+u_\varepsilon^j[\na^\nu, u_\varepsilon^l]\Omega_{\varepsilon li}\|_{L^2}\\
						\leq& C(\|v_\varepsilon\|_{L^\infty}\|\nabla^{k+2}u_\varepsilon\|_{L^2}+\|\nabla u_\varepsilon\|_{L^\infty}\|\nabla^{k+1}v_\varepsilon\|_{L^2}).\nonumber
						\end{align}
						It follows from the H\"{o}lder and Sobolev inequalities that
						\begin{align*}\alabel{numu}
						&\left\|\sum_{|\mu|=k}\begin{pmatrix}
						\nu \\ \mu
						\end{pmatrix}\na^{\nu-\mu} u_\varepsilon^j\nabla^{\mu}\p_tu_\varepsilon^i+\sum_{|\mu|=0}^{k-1}\begin{pmatrix}
						\nu \\ \mu
						\end{pmatrix}\na^{\nu-\mu} u_\varepsilon^j\nabla^{\mu}\p_tu_\varepsilon^i\right\|_{L^2}\\
						\leq& C\||\na u_\varepsilon||\na^k\p_t u_\varepsilon|\|_{L^2}+C\sum_{|\mu|=0}^{k-1}\|\na^{\nu-\mu}u_\varepsilon\|_{L^3}\|\na^\mu \p_tu_\varepsilon\|_{L^6}\\
						\leq& C\||\na u_\varepsilon||\na^k\p_t u_\varepsilon|\|_{L^2}+C\sum_{|\mu|=0}^{k-1}\|\na^{\nu-\mu}u_\varepsilon\|_{H^1}\|\na^\mu \na\p_tu_\varepsilon\|_{L^2}\\
						\leq& C\||\na u_\varepsilon||\na^k\p_t u_\varepsilon|\|_{L^2}+C\|\na\p_tu_\varepsilon\|_{L^2}\|\na^{k+2}u_\varepsilon\|_{L^2}+C\|\na\p_tu_\varepsilon\|_{H^{k-1}}\|\na u_\varepsilon\|_{H^k}.
						\end{align*}
						Thus, we obtain
						\begin{align}
						\|\mathcal{R}_{ij,\nu}^L\|_{L^2}\leq &C\|v_\varepsilon\|_{L^\infty}\|\nabla^{k+2}u_\varepsilon\|_{L^2}+C\|\nabla u_\varepsilon\|_{L^\infty}\|\nabla^{k+1}v_\varepsilon\|_{L^2}\\
						&+C\|\na^{k+1}u_\varepsilon\|_{L^2}\|\nabla u_\varepsilon\|_{L^\infty}\|v_\varepsilon\|_{L^\infty}+C\||\na u_\varepsilon||\na^k\p_t u_\varepsilon|\|_{L^2}\nonumber\\
						&+C\|\na\p_tu_\varepsilon\|_{L^2}\|\na^{k+2}u_\varepsilon\|_{L^2}+C\|\na\p_tu_\varepsilon\|_{H^{k-1}}\|\na u_\varepsilon\|_{H^k}.\nonumber
						\end{align}
			Therefore, it follows from Young's inequality that
			\begin{align}\label{2.58b}
			I_{1,2}\leq& \delta_1\|\na^{k+2}v_\varepsilon\|_{L^2}^2+C\||\na u_\varepsilon||\na^k\p_t u_\varepsilon|\|_{L^2}^2+C(\Lambda_\infty+\|\na\p_t u_\varepsilon\|_{L^2}^2)\mathcal{E}_{k+1}\\
			&+C\mathcal{E}_k\Lambda_\infty\|\na u_\varepsilon\|_{L^\infty}^2+C\|\na\p_tu_\varepsilon\|^2_{H^{k-1}}\|\na u_\varepsilon\|^2_{H^k}.\nonumber
			\end{align}
			Hence, we obtain from \eqref{2.58a} and \eqref{2.58b} that
			\begin{align*}\alabel{2.600}
			I_1\leq&-\int_{\R^3}\left(\alpha_1|u_\varepsilon^i\na^\nu A_{\varepsilon ij} u_\varepsilon^j|^2+\alpha_4|\na^\nu A_\varepsilon |^2+\beta|\na^\nu A_\varepsilon u_\varepsilon|^2\right)  \,dx
			\\
			&+\int_{\R^3}\na^\nu h_\varepsilon ^i\left(\na^\nu\Omega_{\varepsilon ij}u_\varepsilon^j-\frac{\g_2}{\g_1}\na^\nu A_{\varepsilon ij}u_\varepsilon^j\right)  \,dx
			\\
			&+2\delta_1\|\na^{k+2}v_\varepsilon\|_{L^2}^2+C\||\na u_\varepsilon||\na^k\p_t u_\varepsilon|\|_{L^2}^2+C(\Lambda_\infty+\|\na\p_t u_\varepsilon\|_{L^2}^2)\mathcal{E}_{k+1}\\
			&+C\mathcal{E}_k\Lambda_\infty\|\na u_\varepsilon\|_{L^\infty}^2+C\|\na\p_tu_\varepsilon\|^2_{H^{k-1}}\|\na u_\varepsilon\|^2_{H^k}.\nonumber
			\end{align*}
		To estimate $I_2,I_3$, we note  that
		\begin{equation*}
		\na_i u_\varepsilon^\a W_{p_j^\a}=u_\varepsilon\#u_\varepsilon\#\na u_\varepsilon\#\na u_\varepsilon+\na u_\varepsilon\#\na u_\varepsilon
		\end{equation*}
	and apply Lemma \ref{pro-com estimates}  to yield
		\begin{align}
		&\|\na^\nu(v_\varepsilon^jv_\varepsilon^i)\|_{L^2}\leq C\|v_\varepsilon\|_{L^\infty}\|\na^{k+1}v_\varepsilon\|_{L^2},\\
		&\|\na^\nu(\na_i u_\varepsilon^\a W_{p_j^\a})\|_{L^2}\leq C(\|\na u_\varepsilon\|_{L^\infty}\|\na^{k+2}u_\varepsilon\|_{L^2}+\|\na u_\varepsilon\|_{L^\infty} \|\na^{k+1} u_\varepsilon\|_{L^2}).
		\end{align}
		Thus, we obtain from Young's inequality that
		\begin{align}\label{2.61}
		I_2+I_3\leq& \delta_1\int_{\R^3}|\na^{k+2}v_\varepsilon|^2\,dx+C\|v_\varepsilon\|_{L^\infty}^2\|\nabla^{k+1}v_\varepsilon\|_{L^2}^2\\
		&+C\|\nabla u_\varepsilon\|_{L^\infty}^2\|\nabla^{k+2}u_\varepsilon\|_{L^2}^2+C\|\na^{k+1} u_\varepsilon\|_{L^2}^2\|\na u_\varepsilon\|_{L^\infty}^4.\nonumber
		\end{align}
			Substituting \eqref{2.600} and \eqref{2.61} into \eqref{2.57}, and using \eqref{Leslie coe-2}, one has
			\begin{align*}\alabel{2.62}
			&\frac{1}{2}\frac{d}{dt }\int_{\R^3}|\na^\nu v_\varepsilon|^2\,dx+\alpha_4\int_{\R^3}|\na^\nu A_\varepsilon |^2  \,dx\\
			\leq&\int_{\R^3}\na^{k+1}h^i\left(\na^{k+1}\Omega_{\varepsilon  ij}u_\varepsilon^j-\frac{\g_2}{\g_1}\na^{k+1}A_{ij}u_\varepsilon^j\right)  \,dx\\
			&+3\delta_1\|\na^{k+2}v_\varepsilon\|_{L^2}^2+C\||\na u_\varepsilon||\na^k\p_t u_\varepsilon|\|_{L^2}^2+C(\Lambda_\infty+\|\na\p_t u_\varepsilon\|_{L^2}^2)\mathcal{E}_{k+1}\\
			&+C\mathcal{E}_k\Lambda_\infty\|\na u_\varepsilon\|_{L^\infty}^2+C\|\na\p_tu_\varepsilon\|^2_{H^{k-1}}\|\na u_\varepsilon\|^2_{H^k}.\nonumber
			\end{align*}
			Applying $\na^\nu$, with index $\nu$ of order $k+1$, to \eqref{G-L3}, multiplying the resulting equation by $\frac{1}{\g_1}\na^\nu h_\varepsilon$ and integrating over $\R^3$ give
			\begin{align*}\alabel{2.63}
			&-\int_{\R^3}\p_t\na^\nu u_\varepsilon^i\na^\nu h_\varepsilon^i  \;dx +\frac{1}{\g_1}\|\na^\nu h_\varepsilon\|_{L^2}^2
			\\
			=&\int_{\R^3}\na^\nu h_\varepsilon^i\left(\frac{\g_2}{\g_1}\na^{\nu}A_{\varepsilon ij}u_\varepsilon^j-\na^{\nu}\Omega_{\varepsilon ij} u_\varepsilon^j\right)  \,dx\\
			&-\int_{\R^3}\left([\na^\nu, u_\varepsilon^j]\Omega_{\varepsilon ij}-\frac{\g_2}{\g_1}[\na^\nu, u_\varepsilon^j]A_{\varepsilon ij}+\na^{\nu}(v_\varepsilon\cdot \D u_\varepsilon^i)\right)\na^{\nu}h_\varepsilon^i  \,dx
			\\
			\leq&\frac{1}{2\g_1}\|\na^\nu h_\varepsilon\|_{L^2}^2  +\int_{\R^3}\na^\nu h_\varepsilon^i\left(\frac{\g_2}{\g_1}\na^{\nu}A_{\varepsilon ij}u_\varepsilon^j-\na^{\nu}\Omega_{\varepsilon ij} u_\varepsilon^j\right)  \,dx\nonumber\\
			&+C(\|v_\varepsilon\|_{L^\infty}^2\|\nabla^{k+2}u_\varepsilon\|_{L^2}^2+\|\nabla u_\varepsilon\|_{L^\infty}^2\|\nabla^{k+1}v_\varepsilon\|_{L^2}^2),
			\end{align*}
			where, in the last step, we have used \eqref{4-8} and
			\begin{align}
			\|\na^\nu(v_\varepsilon\cdot\na u_\varepsilon)\|_{L^2}\leq C(\|v_\varepsilon\|_{L^\infty}\|\nabla^{k+2}u_\varepsilon\|_{L^2}+\|\nabla u_\varepsilon\|_{L^\infty}\|\nabla^{k+1}v_\varepsilon\|_{L^2}).
			\end{align}
		For the term $$J_0:=-\int_{\R^3}\p_t\na^{\nu} u_\varepsilon^i\na^{\nu}h_\varepsilon^i\, dx $$
		on the left hand side of \eqref{2.63}, integration by parts yields
		\begin{align*}
		&-\int_{\R^3}\p_t\na^{\nu} u_\varepsilon^i\na^{\nu}\na_\a W_{p^i_\a}\, dx=\int_{\R^3}\p_t\na^{\nu}\na_\a u_\varepsilon^i\na^{\nu}W_{p^i_\a}\, dx\\
		=&\int_{\R^3}\p_t\na^{\nu-e_\b}\na_{\b\a}^2 u_\varepsilon^i\na^{\nu-e_\b}\left(W_{p^i_\a p^j_\g}\na^2_{\b\g}u_\varepsilon^j+W_{p_\a^iu^j}\na_\b u_\varepsilon^j\right)\,dx\\
		=&\int_{\R^3}\p_t\na^{\nu}\na_{\a} u_\varepsilon^iW_{p^i_\a p^j_\g}\na^{\nu}\na_{\g} u_\varepsilon^j\,dx-\int_{\R^3}\p_t\na^{\nu} u_\varepsilon^i\na_{\a}\left([\na^{\nu-e_\b},W_{p^i_\a p^j_\g}]\na^2_{\b\g}u_\varepsilon^j\right)\,dx\\
		&-\int_{\R^3}\p_t\na^{\nu} u_\varepsilon^i\na^{\nu-e_\b}\na_{\a}\left(W_{p_\a^iu^j}\na_\b u_\varepsilon^j\right)\,dx\\
		=&\frac{1}{2}\frac{d}{dt}\int_{\R^3}\na^{\nu}\na_{\a} u_\varepsilon^iW_{p^i_\a p^j_\g}\na^{\nu}\na_{\g} u_\varepsilon^j\,dx-\frac{1}{2}\int_{\R^3}\na^{\nu}\na_{\a} u_\varepsilon^i\p_tW_{p^i_\a p^j_\g}\na^{\nu}\na_{\g} u_\varepsilon^j\,dx\\
		&-\int_{\R^3}\p_t\na^{\nu} u_\varepsilon^i\na_{\a}\left([\na^{\nu-e_\b},W_{p^i_\a p^j_\g}]\na^2_{\b\g}u_\varepsilon^j+\na^{\nu-e_\b}\left(W_{p_\a^iu^j}\na_\b u_\varepsilon^j\right)\right)\,dx,
		\end{align*}
		where $e_\b$ denotes the index of taking one derivative with respective to $x_\beta$. Therefore, in view of \eqref{M-F1}, we have
		\begin{align}\label{2.81}
	    J_0=&\frac{1}{2}\frac{d}{dt}\int_{\R^3}\na^{\nu}\na_{\a} u_\varepsilon^iW_{p^i_\a p^j_\g}\na^{\nu}\na_{\g} u_\varepsilon^j\,dx -\int_{\R^3}\p_t\na^{\nu}u_\varepsilon^i\na^{\nu}\left(\frac{(1-|u_\varepsilon|^2)}{\varepsilon^2}u_\varepsilon^i\right)  \,dx \\
		&-\frac{1}{2}\hspace{-.5ex}\int_{\R^3}\hspace{-.5ex}\na^{\nu}\na_{\a} u_\varepsilon^i\p_tW_{p^i_\a p^j_\g}\na^{\nu}\na_{\g} u_\varepsilon^jdx\hspace{-.5ex}-\hspace{-.5ex}\int_{\R^3}\hspace{-.5ex}\p_t\na^{\nu} u_\varepsilon^i\na_{\a}\left([\na^{\nu-e_\b}\hspace{-.5ex},W_{p^i_\a p^j_\g}]\na^2_{\b\g}u_\varepsilon^j\right)dx \nonumber\\
		&+\int_{\R^3}\p_t\na^{\nu} u_\varepsilon^i\left(\na^{\nu}W_{u_\varepsilon^i}-\na^{\nu-e_\b}\na_{\a}\left(W_{p_\a^iu_\varepsilon^j}\na_\b u_\varepsilon^j\right))\right)  \,dx\nonumber
		\\
		=&:J_{0,1}+J_{0,2}+J_{0,3}+J_{0,4}+ J_{0,5}.\nonumber
		\end{align}
		Using the fact
		\begin{equation*}
		\na^\nu(|u_\varepsilon|^2)=2\na^\nu u_\varepsilon^iu_\varepsilon^i+\sum_{|\mu|=1}^{k}\begin{pmatrix}
		\nu \\ \mu
		\end{pmatrix}\na^{\nu-\mu} u_\varepsilon^j\nabla^{\mu}u_\varepsilon^i,
		\end{equation*}
		we can rewrite $J_{0,2}$ as
		\begin{align}\label{J02}
		J_{0,2}&=-\int_{\R^3}\left\{\p_t(\na^\nu u_\varepsilon^i u_\varepsilon^i)-\na^\nu u_\varepsilon\p_t u_\varepsilon\right\}\na^\nu\left(\frac{1-|u_\varepsilon|^2}{\varepsilon^2}\right)\,dx\\
		&\quad-\int_{\R^3}\p_t\na^\nu u_\varepsilon^i[\na^\nu,u_\varepsilon^i]\left(\frac{1-|u_\varepsilon|^2}{\varepsilon^2}\right)\,dx\nonumber\\
		&=\frac{1}{4}\frac{d}{dt}\int_{\R^3}\left|\na^\nu(|u_\varepsilon|^2)\right|^2\,dx-\int_{\R^3}\p_t\na^\nu u_\varepsilon^i[\na^\nu,u_\varepsilon^i]\left(\frac{1-|u_\varepsilon|^2}{\varepsilon^2}\right)\,dx\nonumber\\
		&\quad+\int_{\R^3}\left(\na^\nu u_\varepsilon\p_t u_\varepsilon-\sum_{|\mu|=1}^{k}\begin{pmatrix}
		\nu \\ \mu
		\end{pmatrix}\na^{\nu-\mu} u_\varepsilon^j\nabla^{\mu}\p_tu_\varepsilon^i\right)\na^\nu\left(\frac{1-|u_\varepsilon|^2}{\varepsilon^2}\right)\,dx\nonumber\\
		&=:\frac{1}{4}\frac{d}{dt}\int_{\R^3}\left|\na^\nu(|u_\varepsilon|^2)\right|^2\,dx+B_1+B_2.\nonumber
		\end{align}
		To estimate $J_{0,2}$, we have to control
		\begin{equation*}
		\left\|\varepsilon^{-2}[\na^\nu,u_\varepsilon]\left(1-|u_\varepsilon|^2\right)\right\|_{L^2},\quad\left\|\varepsilon^{-2}\na^\nu\left(1-|u_\varepsilon|^2\right)\right\|_{L^2}.
		\end{equation*}
		Due to the fact that $\frac{3}{4}\leq|u_\varepsilon|\leq \frac{5}{4}$, the equation \eqref{G-L3} gives
		\begin{equation*}
		\varepsilon^{-2}(1-|u_\varepsilon|^2)=|u_\varepsilon|^{-2}\left(\g_1N_\varepsilon+\g_2A_\varepsilon u_\varepsilon-u_\varepsilon^i\na_\a W_{p_\a^i}+u_\varepsilon^iW_{u_\varepsilon^i}\right).
		\end{equation*}
		Note that
		\begin{equation*}
		\na_\alpha W_{p^i_\a}=u_\varepsilon\#u_\varepsilon\#\na^2u_\varepsilon+u_\varepsilon\#\na u_\varepsilon\#\na u_\varepsilon+\na^2 u_\varepsilon
		\end{equation*}
		and
				\begin{equation*}
		 W_{u_\varepsilon^i}=u_\varepsilon\#\na u_\varepsilon\#\na u_\varepsilon.
		\end{equation*}
	    Then we  use \eqref{com123} to write
		\begin{align*}\label{J02-1}
		\varepsilon^{-2}[\na^\nu,u_\varepsilon]\left(1-|u_\varepsilon|^2\right)=&\g_1[\na^\nu, u_\varepsilon|u_\varepsilon|^{-2}]N_\varepsilon-\g_1u_\varepsilon[\na^\nu,|u_\varepsilon|^{-2}]N_\varepsilon\\
		&+\g_2[\na^\nu,u_\varepsilon \#u_\varepsilon]A_{\varepsilon }-\g_2u_\varepsilon[\na^\nu,u_\varepsilon]A_{\varepsilon }\nonumber\\
		&+[\na^\nu,|u_\varepsilon|^{-2}u_\varepsilon\#u_\varepsilon\#u_\varepsilon\#u_\varepsilon+|u_\varepsilon|^{-2}u_\varepsilon\#u_\varepsilon]\na^2 u_\varepsilon\\
		&-u_\varepsilon\#[\na^\nu,|u_\varepsilon|^{-2}u_\varepsilon\#u_\varepsilon\#u_\varepsilon+|u_\varepsilon|^{-2}u_\varepsilon]\na^2 u_\varepsilon\nonumber\\
		&+[\na^\nu,|u_\varepsilon|^{-2}u_\varepsilon\#u_\varepsilon\#u_\varepsilon]\na u_\varepsilon\#\na u_\varepsilon\\
		&-u_\varepsilon[\na^\nu,|u_\varepsilon|^{-2}u_\varepsilon\#u_\varepsilon]\na u_\varepsilon\#\na u_\varepsilon \nonumber\\
=&:\sum_{i=1}^{8}G_i.\nonumber
		\end{align*}
		We apply Lemma \ref{pro-com estimates} to estimate $G_i$, $i=1,\cdots,8$. Firstly, we  claim that for any $u_\varepsilon$ satisfying $\frac{3}{4}\leq|u_\varepsilon|\leq \frac{5}{4}$ and any $k\geq 0$
        \begin{equation}\label{u-m}
       \|\nabla^{k+1}(|u_\varepsilon|^{-2})\|_{L^2}\leq C\|\nabla^{k+1} u_\varepsilon\|_{L^2}.
        \end{equation}
       Indeed, by direct calculations, we have
       \begin{align*}
       |\na^{k+1}(|u_\varepsilon|^{-2})|\leq C\sum_{|\mu_1|+\cdots|\mu_m|=k+1}|\na^{\mu_1}u_\varepsilon\#\cdots\#\na^{\mu_m}u_\varepsilon|.
       \end{align*}
       Then, it follows from the H\"{o}lder and Gagliardo–Nirenberg interpolation inequalities that
       \begin{align*}
        \|\nabla^{k+1}(|u_\varepsilon|^{-2})\|_{L^2}&\leq C\sum_{|\mu_1|+\cdots|\mu_m|=k+1}\|\na^{\mu_1}u_\varepsilon\#\cdots\#\na^{\mu_m} u_\varepsilon\|_{L^2}\\
        &\leq C\sum_{|\mu_1|+\cdots|\mu_m|=k+1}\|\na^{\mu_1}u_\varepsilon\|_{L^{r_1}}\cdots\|\na^{\mu_m}u_\varepsilon\|_{L^{r_m}}\\
        &\leq C\|\na^{k+1}u_\varepsilon\|_{L^2}^{\a_1}\|u_\varepsilon\|_{L^\infty}^{1-\alpha_1}\cdots\|\na^{k+1}u_\varepsilon\|_{L^2}^{\a_m}\|u\|_{L^\infty}^{1-\alpha_m}\leq C\|\na^{k+1}u_\varepsilon\|_{L^2},
       \end{align*}
       where $r_i$ and $\a_i$, $i=1\cdots m$ satisfies
       \begin{equation*}
       \sum_{i=1}^n\frac{1}{r_i}=\frac{1}{2}, \quad\frac{1}{r_i}=\frac{|\mu_i|}{3}+(\frac{1}{2}-\frac{k+1}{3})\a_i+\frac{1-\a_i}{\infty}\, \mbox{, so that }\, \sum_{i=1}^{n}\a_i=1.
       \end{equation*}
       Then, it follows from an expression like \eqref{com-n1} and arguments like \eqref{com-n1}-\eqref{numu} that
       \begin{align*}
       \|G_1\|_{L^2}\leq &C\||\na (u_\varepsilon|u_\varepsilon|^{-2})||\na^k \p_t u_\varepsilon|\|_{L^2}+C\|\na \p_t u_\varepsilon\|_{L^2}\|\na^{k+2}(u_\varepsilon|u_\varepsilon|^{-2})\|_{L^2}\\
       &+C\|\na \p_t u_\varepsilon\|_{H^{k-1}}\|\na (u_\varepsilon|u_\varepsilon|^{-2})\|_{H^k}+C\|v_\varepsilon\|_{L^\infty}\|\na^{k+2}(u_\varepsilon|u_\varepsilon|^{-2})\|_{L^2}\nonumber\\
       &+C\|\na(u_\varepsilon|u_\varepsilon|^{-2})\|_{L^\infty}\|\na^{k+1} v_\varepsilon\|_{L^2}+C\|\na(u_\varepsilon|u_\varepsilon|^{-2})\|_{L^\infty}\|v_\varepsilon\|_{L^\infty}\|\na ^{k+1}u_\varepsilon\|_{L^2}\nonumber\\
       &+C\|v_\varepsilon\|_{L^\infty}\|\na^{k+2}(u_\varepsilon\#u_\varepsilon\#|u_\varepsilon|^{-2})\|_{L^2}+C\|\na u_\varepsilon\|_{L^\infty}\|\na^{k+1}v_\varepsilon\|_{L^2}\nonumber\\
       \leq&C\|v_\varepsilon\|_{L^\infty}\|\nabla^{k+2}u_\varepsilon\|_{L^2}+C\|\nabla u_\varepsilon\|_{L^\infty}\|\nabla^{k+1}v_\varepsilon\|_{L^2}\nonumber\\
       &+C\|\na^{k+1}u_\varepsilon\|_{L^2}\|\nabla u_\varepsilon\|_{L^\infty}\|v_\varepsilon\|_{L^\infty}+C\||\na u_\varepsilon||\na^k\p_t u_\varepsilon|\|_{L^2}\nonumber\\
       &+C\|\na\p_tu_\varepsilon\|_{L^2}\|\na^{k+2}u_\varepsilon\|_{L^2}+C\|\na\p_tu_\varepsilon\|_{H^{k-1}}\|\na u_\varepsilon\|_{H^k}. \nonumber
       \end{align*}
   Similarly, we  have the same estimate for $\|G_2\|_{L^2}$ as $\|G_1\|_{L^2}$.
       Here we have used
       \begin{align*}
       &\|\na^{k+2}(u_\varepsilon|u_\varepsilon|^{-2})\|_{L^2}+\|\na^{k+2}(u_\varepsilon\#u_\varepsilon\#|u_\varepsilon|^{-2})\|_{L^2}\\
       \leq& C\|\na^{k+2}(|u_\varepsilon|^{-2})\|_{L^2}+C\|\nabla^{k+2}u_\varepsilon\|_{L^2}\leq C\|\na^{k+2}u_\varepsilon\|_{L^2}\nonumber
       \end{align*}
       from \eqref{u-m}. By using Lemma \ref{pro-com estimates} again, it is easy to derive
       \begin{align*}
       &\|G_3+G_4\|_{L^2}\leq C\|v_\varepsilon\|_{L^\infty}\|\nabla^{k+2}u_\varepsilon\|_{L^2}+C\|\nabla u_\varepsilon\|_{L^\infty}\|\nabla^{k+1}v_\varepsilon\|_{L^2},\\
       &\|G_5+G_6\|_{L^2}\leq C\|\na u_\varepsilon\|_{L^\infty}\|\nabla^{k+2}u_\varepsilon\|_{L^2},\\
       &\|G_7+G_8\|_{L^2}\leq C\|\na u_\varepsilon\|_{L^\infty} \|\na^{k+1}u_\varepsilon\|_{L^2}.
       \end{align*}
       Therefore, we obtain
       \begin{align}\label{G0}
       &\left\|\varepsilon^{-2}[\na^\nu,u_\varepsilon^i](1-|u_\varepsilon|^2)\right\|_{L^2}\\
       \leq& C\||\na u_\varepsilon||\na^k\p_t u_\varepsilon|\|_{L^2}^2+C(\Lambda_\infty+\|\na\p_t u_\varepsilon\|_{L^2}^2)\mathcal{E}_{k+1}\nonumber\\
       &+C\mathcal{E}_k\Lambda_\infty\|\na u_\varepsilon\|_{L^\infty}^2+C\|\na\p_tu_\varepsilon\|^2_{H^{k-1}}\|\na u_\varepsilon\|^2_{H^k}.\nonumber
       \end{align}
      To estimate  $\left\|\varepsilon^{-2}\na^\nu(1-|u_\varepsilon|^2)\right\|_{L^2}$, we extract terms of higher order derivatives and write
       \begin{align*}
       \na^\nu\left(\frac{1-|u_\varepsilon|^2}{\varepsilon^2}\right)=&\g_1|u_\varepsilon|^{-2}\na^\nu N_\varepsilon+\g_1[\na^\nu,|u_\varepsilon|^{-2}]N_\varepsilon+\g_2|u_\varepsilon|^{-2}u_\varepsilon^j\na^\nu A_{\varepsilon ij}\\
       &+\g_2[\na^\nu, |u_\varepsilon|^{-2}u_\varepsilon^j]A_{\varepsilon ij}+u_\varepsilon\#u_\varepsilon\#u_\varepsilon\#\na^\nu\na^2 u_\varepsilon\nonumber\\
       &+[\na^\nu,u_\varepsilon\#u_\varepsilon\#u_\varepsilon]\na^2u_\varepsilon+u_\varepsilon\#\na^\nu\na^2 u_\varepsilon+[\na^\nu,u_\varepsilon]\na^2u_\varepsilon\nonumber\\
       &+u_\varepsilon\#u_\varepsilon\na^\nu(\na u_\varepsilon\#\na u_\varepsilon)+[\na^\nu, u_\varepsilon\# u_\varepsilon]\na u_\varepsilon\#\na u_\varepsilon.\nonumber
       \end{align*}
       Then, by using similar arguments to derive \eqref{G0}, it is clear that
       \begin{align}\label{G00}
       \left\|\na^\nu\left(\frac{1-|u_\varepsilon|^2}{\varepsilon^2}\right)\right\|^2_{L^2}\leq &C\|\na^{k+1}\p_t u_\varepsilon\|^2_{L^2}+C\|\na^{k+2}v_\varepsilon\|^2_{L^2}+C\|\na^{k+3}u_\varepsilon\|^2_{L^2}\nonumber\\
       &+C\||\na u_\varepsilon||\na^k\p_t u_\varepsilon|\|_{L^2}^2+C(\Lambda_\infty+\|\na\p_t u_\varepsilon\|_{L^2}^2)\mathcal{E}_{k+1}\\
       &+C\mathcal{E}_k\Lambda_\infty\|\na u_\varepsilon\|_{L^\infty}^2+C\|\na\p_tu_\varepsilon\|^2_{H^{k-1}}\|\na u_\varepsilon\|^2_{H^k}.\nonumber
       \end{align}
      Therefore, using \eqref{numu}, \eqref{G00} and Young's inequality, we have
       \begin{align}\label{B2}
       |B_1+B_2|\leq &\delta_1\|\na^{k+2}v_\varepsilon\|_{L^2}^2+\delta_2\|\na^{k+1}\p_t u_\varepsilon\|^2_{L^2}+\delta_3\|\na^{k+3}u_\varepsilon\|_{L^2}^2\\
       &+C\||\na u_\varepsilon||\na^k\p_t u_\varepsilon|\|_{L^2}^2+C(\Lambda_\infty+\|\na\p_t u_\varepsilon\|_{L^2}^2)\mathcal{E}_{k+1}\nonumber\\
       &+C\mathcal{E}_k\Lambda_\infty\|\na u_\varepsilon\|_{L^\infty}^2+C\|\na\p_tu_\varepsilon\|^2_{H^{k-1}}\|\na u_\varepsilon\|^2_{H^k}.\nonumber
       \end{align}
       Substituting \eqref{B2} into \eqref{J02}, one has
       \begin{align}\label{J-02}
       J_{0,2}\geq& \frac{1}{4}\frac{d}{dt}\int_{\R^3}\left|\na^\nu(|u_\varepsilon|^2)\right|^2\,dx-\delta_1\|\na^{k+2}v_\varepsilon\|_{L^2}^2-\delta_2\|\na^{k+1}\p_t u_\varepsilon\|_{L^2}^2\\
       &-\delta_3\|\na^{k+3}u_\varepsilon\|_{L^2}^2-C\||\na u_\varepsilon||\na^k\p_t u_\varepsilon|\|_{L^2}^2-C\mathcal{E}_k\Lambda_\infty\|\na u_\varepsilon\|_{L^\infty}^2\nonumber\\
       &-C(\Lambda_\infty+\|\na\p_t u_\varepsilon\|_{L^2}^2)\mathcal{E}_{k+1}-C\|\na\p_tu_\varepsilon\|^2_{H^{k-1}}\|\na u_\varepsilon\|^2_{H^k}.\nonumber
       \end{align}
		For $J_{0,3}$, it follows from the H\"{o}lder, Sobolev and Young inequalities that
		\begin{align}\label{J-03}
		|J_{0,3}|&\leq C\int_{\R^3}|\p_tu_\varepsilon||\na^{k+2}u_\varepsilon|^2\,dx\leq C\|\na^{k+2}u_\varepsilon\|_{L^6}\|\p_tu_\varepsilon\|_{L^3}\|\na^{k+2}u_\varepsilon\|_{L^2}\\
		&\leq \delta_3\|\na^{k+3}u_\varepsilon\|_{L^2}^2+C\|\p_tu_\varepsilon\|_{H^1}^2\|\na^{k+2}u_\varepsilon\|_{L^2}^2.\nonumber
		\end{align}
		For $J_{0,4}$, we first rewrite the commutator in the integral as
		\begin{align*}
		\na_{\a}\left([\na^{\nu-e_\b},W_{p^i_\a p^j_\g}]\na^2_{\b\g}u_\varepsilon^j\right)
		=[\na^{\nu-e_\beta}\na_\a,W_{p^i_\a p^j_\g}]\na^2_{\b\g}u_\varepsilon^j-\na_\a W_{p^i_\a p^j_\g}\na^\nu\na_\g u_\varepsilon^j.
		\end{align*}
		Since $W_{p^i_\a p^j_\g}=u_\varepsilon\#u_\varepsilon+1$, then, utilizing Lemma \ref{pro-com estimates} gives
		\begin{align*}
		\|\na_{\a}\left([\na^{\nu-e_\b},W_{p^i_\a p^j_\g}]\na^2_{\b\g}u_\varepsilon^j\right)\|_{L^2}\leq C\|\na u_\varepsilon\|_{L^\infty}\|\na^{k+2}u_\varepsilon\|_{L^2}.
		\end{align*}
		Thus, it follows from Young's inequality that
		\begin{align*}\alabel{higher estimates J04}
		|J_{0,4}|\leq\delta_2\|\na^{k+1}\p_t u_\varepsilon\|_{L^2}^2+C\|\na u_\varepsilon\|_{L^\infty}^2\|\na^{k+2}u_\varepsilon\|_{L^2}^2.
		\end{align*}
		For $J_{0,5}$, note that $W_{u_\varepsilon^i}\equiv W_{p_\a^iu_\varepsilon^j}\na_\b u_\varepsilon^j=u_\varepsilon\#\na u_\varepsilon\#\na u_\varepsilon,$
	    then Lemma \ref{pro-com estimates} gives
		\begin{equation}\label{uuu}
		\|\na^\nu(u_\varepsilon\#\na u_\varepsilon\#\na u_\varepsilon)\|_{L^2}\leq C(\|\na u_\varepsilon\|_{L^\infty}\|\na^{k+2}u_\varepsilon\|_{L^2}+\|\nabla u_\varepsilon\|_{L^\infty}\|\na^{k+1}u_\varepsilon\|_{L^2}).
		\end{equation}
		Hence, we can obtain that
		\begin{equation}\label{higher estimates J05}
		|J_{0,5}|\leq \delta_2\|\na^{k+1}\p_t u_\varepsilon\|_{L^2}^2+C(\|\na u_\varepsilon\|_{L^\infty}^2\|\na^{k+2}u_\varepsilon\|_{L^2}^2+\|\nabla u_\varepsilon\|_{L^\infty}^4\|\na^{k+1}u_\varepsilon\|_{L^2}^2).
		\end{equation}
		Substitute \eqref{J-02} - \eqref{higher estimates J05} into \eqref{2.81}, we compute
		\begin{align*}\alabel{2.84}
        J_0\geq&\frac{d}{dt}\int_{\R^3}\frac{1}{2}\na^{\nu}\na_{\a} u_\varepsilon^iW_{p^i_\a p^j_\g}\na^{\nu}\na_{\g} u_\varepsilon^j+\frac{1}{4\varepsilon^2}|\na^\nu(|u_\varepsilon|^2)|^2\,dx
		\\
		&-\delta_1\|\na^{k+2}v_\varepsilon\|_{L^2}^2-3\delta_2\|\na^{k+1}\p_t u_\varepsilon\|_{L^2}^2-2\delta_3\|\na^{k+3}u_\varepsilon\|_{L^2}^2\nonumber\\
		&-C\||\na u_\varepsilon||\na^k\p_t u_\varepsilon|\|_{L^2}^2-C(\Lambda_\infty+\|\p_tu_\varepsilon\|_{H^1}^2)\mathcal{E}_{k+1}\nonumber\\
		&-C\mathcal{E}_k\Lambda_\infty\|\na u_\varepsilon\|_{L^\infty}^2-C\|\na\p_tu_\varepsilon\|^2_{H^{k-1}}\|\na u_\varepsilon\|^2_{H^k}.\nonumber
		\end{align*}
		Plugging \eqref{2.84} into \eqref{2.63}, we obtain
		\begin{align*}\alabel{2.85}
		&\frac{d}{dt}\int_{\R^3}\frac{1}{2}\na^{\nu}\na_{\a} u_\varepsilon^iW_{p^i_\a p^j_\g}\na^{\nu}\na_{\g} u_\varepsilon^j+\frac{1}{4\varepsilon^2}|\na^\nu(|u_\varepsilon|^2)|^2\,dx+\frac{1}{2\g_1}\|\na^\nu h_\varepsilon\|_{L^2}^2
		\\
		\leq&\int_{\R^3}\na^\nu h_\varepsilon^i\left(\frac{\g_2}{\g_1}\na^{\nu}A_{\varepsilon ij}u_\varepsilon^j-\na^{\nu}\Omega_{\varepsilon ij} u_\varepsilon^j\right)  \,dx
		\\
			&+\delta_1\|\na^{k+2}v_\varepsilon\|_{L^2}^2+3\delta_2\|\na^{k+1}\p_t u_\varepsilon\|^2_{L^2}+2\delta_3\|\na^{k+3}u_\varepsilon\|_{L^2}^2
			\\
			&+C\||\na u_\varepsilon||\na^k\p_t u_\varepsilon|\|_{L^2}^2+C(\Lambda_\infty+\|\p_tu_\varepsilon\|_{H^1}^2)\mathcal{E}_{k+1}\nonumber\\
			&+C\mathcal{E}_k\Lambda_\infty\|\na u_\varepsilon\|_{L^\infty}^2+C\|\na\p_tu_\varepsilon\|^2_{H^{k-1}}\|\na u_\varepsilon\|^2_{H^k}.\nonumber
		\end{align*}
			Summing \eqref{2.62} with \eqref{2.85} yields
			\begin{align}\label{2.86}
			&\frac{1}{2}\frac{d}{dt }\int_{\R^3}|\na^\nu v_\varepsilon|^2\,dx+\frac{1}{2}\frac{d}{dt}\int_{\R^3}\na^{\nu}\na_{\a} u_\varepsilon^iW_{p^i_\a p^j_\g}\na^{\nu}\na_{\g} u_\varepsilon^j\\
			&+\frac{1}{4\varepsilon^2}\frac{d}{dt}\int_{\R^3}|\na^\nu(|u_\varepsilon|^2)|^2\,dx+\alpha_4\int_{\R^3}|\na^\nu A_\varepsilon |^2  \,dx+\frac{1}{2\g_1}\int_{\R^3}|\na^\nu h_\varepsilon|^2\,dx\nonumber\\
			\leq& 3\delta_1\|\na^{k+2}v_\varepsilon\|_{L^2}^2+3\delta_2\|\na^{k+1}\p_t u_\varepsilon\|^2_{L^2}+2\delta_3\|\na^{k+3}u_\varepsilon\|_{L^2}^2
			\nonumber\\
			&+C\||\na u_\varepsilon||\na^k\p_t u_\varepsilon|\|_{L^2}^2+C(\Lambda_\infty+\|\p_tu_\varepsilon\|_{H^1}^2)\mathcal{E}_{k+1}\nonumber\\
			&+C\mathcal{E}_k\Lambda_\infty\|\na u_\varepsilon\|_{L^\infty}^2+C\|\na\p_tu_\varepsilon\|^2_{H^{k-1}}\|\na u_\varepsilon\|^2_{H^k}.\nonumber
			\end{align}
			Using integration by parts and \eqref{G-L2}, we note that
			\begin{equation}\label{anti-v}
			\alpha_4\int_{\R^3}|\na^\nu A_\varepsilon |^2  \,dx=\frac{\alpha_4}{2}\int_{\R^3}|\na^\nu \na v_\varepsilon|^2\,dx.
			\end{equation}
			Then, it remains to estimate terms involving $\na^{k+3}u_\varepsilon$ and $\na^{k+1}\p_t u_\varepsilon$. Applying $\na^\nu$, with index $\nu$ of order $k+1$, to \eqref{G-L3} and multiplying the resulting equation by $\na^{\nu}\p_t u_\varepsilon$, we have
			\begin{align}\alabel{2.87}
			&-\int_{\R^3}\na^{\nu}h_\varepsilon\cdot\na^{\nu}\p_t u_\varepsilon  \,dx +\g_1\int_{\R^3}|\na^\nu \p_tu_\varepsilon|^2\,dx \\
			=&\int_{\R^3}\left(\na^{\nu}(\g_1\Omega_\varepsilon u_\varepsilon-\g_2A_\varepsilon u_\varepsilon)-\g_1\na^{\nu}(v_\varepsilon\cdot \D u_\varepsilon)\right)\cdot\na^{\nu}\p_t u_\varepsilon  \;dx \nonumber\\
			\leq&\frac{\g_1}{4}\int_{\R^3}|\na^\nu \p_tu_\varepsilon|^2\,dx +C\|(\na^\nu\Omega_\varepsilon )u_\varepsilon\|_{L^2}^2+C\|(\na^\nu A_\varepsilon)u_\varepsilon\|_{L^2}^2\nonumber\\
			&+C\|[\na^\nu,u_\varepsilon]\Omega_\varepsilon\|_{L^2}^2+C\|[\na^\nu,u_\varepsilon]A_\varepsilon\|_{L^2}^2+C\|\na^\nu\na(v_\varepsilon\cdot u_\varepsilon)\|_{L^2}^2\nonumber\\
			\leq& \frac{\g_1}{4}\int_{\R^3}|\na^\nu \p_tu_\varepsilon|^2\,dx+C_1\|\na^{k+2}v_\varepsilon\|_{L^2}^2\nonumber\\
			&+C\|v_\varepsilon\|_{L^\infty}^2\|\nabla^{k+2}u_\varepsilon\|_{L^2}^2+C\|\nabla u_\varepsilon\|_{L^\infty}^2\|\nabla^{k+1}v_\varepsilon\|_{L^2}^2,\nonumber
			\end{align}
			where we have used \eqref{4-8} and Lemma \ref{pro-com estimates} in the last step. Plugging \eqref{2.84}  into \eqref{2.87} yields		
			\begin{align*}\alabel{2.88}
			&\frac{d}{dt}\int_{\R^3}\frac{1}{2}\na^{\nu}\na_{\a} u_\varepsilon^iW_{p^i_\a p^j_\g}\na^{\nu}\na_{\g} u_\varepsilon^j+\frac{1}{4\varepsilon^2}|\na^\nu(|u_\varepsilon|^2)|^2\,dx+\frac{3\g_1}{4}\|\na^\nu\p_tu_\varepsilon\|_{L^2}^2
			\\
			\leq&(C_1+\delta_1)\|\na^{k+2}v_\varepsilon\|_{L^2}^2+3\delta_2\|\na^{k+1}\p_t u_\varepsilon\|^2_{L^2}+2\delta_3\|\na^{k+3}u_\varepsilon\|_{L^2}^2
			\\
			&\quad+C\||\na u_\varepsilon||\na^k\p_t u_\varepsilon|\|_{L^2}^2+C(\Lambda_\infty+\|\p_tu_\varepsilon\|_{H^1}^2)\mathcal{E}_{k+1}\nonumber\\
			&\quad+C\mathcal{E}_k\Lambda_\infty\|\na u_\varepsilon\|_{L^\infty}^2+C\|\na\p_tu_\varepsilon\|^2_{H^{k-1}}\|\na u_\varepsilon\|^2_{H^k}.\nonumber
			\end{align*}
		Applying $\na^\nu\na_\beta$, with index $\nu$ of order $k+1$, to \eqref{G-L3}, multiplying by $\na^{\nu}\na _\beta u_\varepsilon$ and integrating by parts,   it follows from a similar argument as the one in \eqref{2.87} that
			\begin{align*}\alabel{3.35}
			&\frac12\frac{d}{dt}\int_{\R^3}|\na^{\nu}\na u_\varepsilon|^2\,dx-\frac{1}{\g_1}\int_{\R^3}\na^{\nu}\na_\b h_\varepsilon\cdot\na^{\nu}\na_\beta u_\varepsilon  \,dx \\
			=&-\int_{\R^3}\na^{\nu}\na_\b((v_\varepsilon\cdot\D u_\varepsilon)-\Omega_\varepsilon  u_\varepsilon+\frac{\g_2}{\g_1}A_\varepsilon u_\varepsilon) \cdot\na^{\nu}\na_\b u_\varepsilon  \,dx \nonumber
			\\
			=&\int_{\R^3}\na^{\nu}\na_\b((v_\varepsilon\cdot\D u_\varepsilon)-\Omega_\varepsilon  u_\varepsilon+\frac{\g_2}{\g_1}A_\varepsilon u_\varepsilon) \cdot\na^{\nu}\Delta u_\varepsilon  \,dx \nonumber
			\\
			\leq&\delta_3\|\na^{k+3}u_\varepsilon\|_{L^2}^2+C_2\|\na^{k+2}v_\varepsilon\|_{L^2}^2+C\|v_\varepsilon\|_{L^\infty}^2\|\nabla^{k+2}u_\varepsilon\|_{L^2}^2+C\|\nabla u_\varepsilon\|_{L^\infty}^2\|\nabla^{k+1}v_\varepsilon\|_{L^2}^2.\nonumber
			\end{align*}
			To estimate the term $$ K_0:=-\frac{1}{\g_1}\int_{\R^3}\na^{\nu}\na_\beta h_\varepsilon\cdot\na^{\nu}\na_\b u_\varepsilon  \;dx,$$ we use \eqref{M-F1} and integration by parts to get
			\begin{align*}\alabel{2.90}
		    K_0=&\frac{1}{\g_1}\int_{\R^3}\na^{\nu}\na^2_{\beta\g}u_\varepsilon^jW_{p_\alpha^ip_\g^j}\na^{\nu}\na^2_{\alpha\beta}u_\varepsilon^i  \;dx \\
			&-\frac{1}{\g_1}\int_{\R^3}\na^{\nu}\na_\b\left(\frac{(1-|u_\varepsilon|^2)u_\varepsilon^i}{\varepsilon^2}\right)\na^{\nu}\na_\beta u_\varepsilon^i  \;dx \nonumber\\
			&+\frac{1}{\g_1}\int_{\R^3}[\na^\nu,W_{p_\alpha^ip_\g^j}]\na^2_{\beta\g}u_\varepsilon^j\na^{\nu}\na^2_{\alpha\beta}u_\varepsilon^i  \,dx \nonumber\\
			&-\frac{1}{\g_1}\int_{\R^3}\left(\na^{\nu}W_{u_\varepsilon^i}\na^{\nu}\Delta u_\varepsilon^i-\nabla^{\nu}(W_{p_\alpha^i u_\varepsilon^j}\nabla_\beta u_\varepsilon^j)\nabla^{\nu}\nabla^2_{\alpha\beta}u_\varepsilon^i\right)  \,dx \nonumber\\
			=:&K_{0,1}+K_{0,2}+K_{0,3}+ K_{0,4}.\nonumber
			\end{align*}
			It follows from \eqref{Co2} that
			\begin{equation}\label{2.90-0}
			K_{0,1}\geq \frac{a}{\g_1}\|\na^{\nu}\na^2u_\varepsilon\|_{L^2}^2.
			\end{equation}
			For $K_{0,2}$, it can be rewritten as follows
			\begin{align*}
			K_{0,2}=&\frac{1}{\g_1\varepsilon^2}\int_{\R^3}\left(\na^\nu\na_\b\left(|u_\varepsilon|^2\right)u_\varepsilon^i-[\na^\nu\na_\b, u_\varepsilon^i]\left(1-|u_\varepsilon|^2\right)\right)\na^\nu\na_\beta u_\varepsilon^i\,dx\\
			=&\frac{1}{2\g_1\varepsilon^2}\int_{\R^3}|\na^\nu \na(|u_\varepsilon|^2)|^2\,dx-\frac{1}{\g_1\varepsilon^2}\int_{\R^3}\na^\nu\na_\b\left(|u_\varepsilon|^2\right)[\na^\nu,u_\varepsilon^i]\na_\beta u_\varepsilon^i\,dx\\
			&-\frac{1}{\g_1\varepsilon^2}\int_{\R^3}\left(\na_\b[\na^\nu, u_\varepsilon^i]\left(1-|u_\varepsilon|^2\right)+\na_\b u_\varepsilon^i\na^\nu(1-|u_\varepsilon|^2)\right)\na^\nu\na_\beta u_\varepsilon^i\,dx\\
			=&\frac{1}{2\g_1\varepsilon^2}\int_{\R^3}|\na^\nu \na(|u_\varepsilon|^2)|^2\,dx+\frac{1}{\g_1\varepsilon^2}\int_{\R^3}\na^\nu\left(|u_\varepsilon|^2\right)[\na^\nu\na_\beta,u_\varepsilon^i]\na_\beta u_\varepsilon^i\,dx\\
			&+\frac{1}{\g_1\varepsilon^2}\int_{\R^3}\left([\na^\nu, u_\varepsilon^i]\left(1-|u_\varepsilon|^2\right)\right)\na^\nu\Delta u_\varepsilon^i\,dx,
			\end{align*}
			where we have used the fact
			\begin{equation*}
			\na_\b[\na^\nu,f]g=[\na^\nu\na_\b,f]g-\na_\b f\na^\nu g
			\end{equation*}
			for   two functions $f$ and $g$. Then, it follows from \eqref{G00}, \eqref{G0} and Lemma \ref{pro-com estimates} that
			\begin{align*}
			&\left|\frac{1}{\g_1\varepsilon^2}\int_{\R^3}\na^\nu\left(|u_\varepsilon|^2\right)[\na^\nu\na_\beta,u_\varepsilon^i]\na_\beta u_\varepsilon^i\,dx\right|\\
			\leq& \|\varepsilon^{-2}\na^\nu(|u_\varepsilon|^{-2})\|_{L^2}\|[\na^\nu\na_\beta,u_\varepsilon^i]\na_\beta u_\varepsilon^i\|_{L^2}\\
			\leq& \|\varepsilon^{-2}\na^\nu(|u_\varepsilon|^{-2})\|_{L^2}\|\na u_\varepsilon\|_{L^\infty}\|\na^{k+2}u_\varepsilon\|_{L^2}\\
			\leq&\delta_1\|\na^{k+2}v_\varepsilon\|_{L^2}^2+\delta_2\|\na^{k+1}\p_t u_\varepsilon\|_{L^2}^2+\delta_3\|\na^{k+3}u_\varepsilon\|_{L^2}^2+C\||\na u_\varepsilon||\na^k\p_t u_\varepsilon|\|_{L^2}^2\\
			&+C(\Lambda_\infty+\|\na\p_t u_\varepsilon\|_{L^2}^2)\mathcal{E}_{k+1}+C\mathcal{E}_k\Lambda_\infty\|\na u_\varepsilon\|_{L^\infty}^2+C\|\na\p_tu_\varepsilon\|^2_{H^{k-1}}\|\na u_\varepsilon\|^2_{H^k}.\nonumber
			\end{align*}
			and
						\begin{align*}
			&\left|\frac{1}{\g_1\varepsilon^2}\int_{\R^3}\left([\na^\nu, u_\varepsilon^i]\left(1-|u_\varepsilon|^2\right)\right)\na^\nu\Delta u_\varepsilon^i\,dx\right|\\
			\leq& C\|[\na^\nu, u_\varepsilon^i]\left(\varepsilon^{-2}(1-|u_\varepsilon|^2)\right)\|_{L^2}\|\na^{k+3}u_\varepsilon\|_{L^2}\\
			\leq&\delta_3\|\na^{k+3}u_\varepsilon\|_{L^2}^2+C\||\na u_\varepsilon||\na^k\p_t u_\varepsilon|\|_{L^2}^2+C(\Lambda_\infty+\|\na\p_t u_\varepsilon\|_{L^2}^2)\mathcal{E}_{k+1}\nonumber\\
			&+C\mathcal{E}_k\Lambda_\infty\|\na u_\varepsilon\|_{L^\infty}^2+C\|\na\p_tu_\varepsilon\|^2_{H^{k-1}}\|\na u_\varepsilon\|^2_{H^k}.\nonumber
			\end{align*}
			Hence, we have
			\begin{align}
			K_{0,2}\geq& \frac{1}{2\g_1\varepsilon^2}\int_{\R^3}|\na^\nu \na_\b(|u_\varepsilon|^2)|^2\,dx-\delta_1\|\na^{k+2}v_\varepsilon\|_{L^2}^2-\delta_2\|\na^{k+1}\p_t u_\varepsilon\|_{L^2}^2\\
			&-\delta_3\|\na^{k+3}u_\varepsilon\|_{L^2}^2-C\||\na u_\varepsilon||\na^k\p_t u_\varepsilon|\|_{L^2}^2-C\mathcal{E}_k\Lambda_\infty\|\na u_\varepsilon\|_{L^\infty}^2\nonumber\\
			&-C(\Lambda_\infty+\|\na\p_t u_\varepsilon\|_{L^2}^2)\mathcal{E}_{k+1}-C\|\na\p_tu_\varepsilon\|^2_{H^{k-1}}\|\na u_\varepsilon\|^2_{H^k}.\nonumber
			\end{align}
			By using \eqref{uuu} and Young's inequality that
			\begin{align*}\alabel{2.92}
			|K_{0,3}+K_{0,4}|\leq& \delta_3\|\na^{k+3}u_\varepsilon\|_{L^2}^2+C\|\na u_\varepsilon\|_{L^\infty}^2\|\na^{k+2}u_\varepsilon\|_{L^2}^2\\
			&+C\|\nabla u_\varepsilon\|_{L^\infty}^4\|\na^{k+1}u_\varepsilon\|_{L^2}^2.
			\end{align*}
			Substituting \eqref{2.90-0}-\eqref{2.92} into \eqref{2.90}, the inequality \eqref{3.35} reads as
			\begin{align*}\alabel{2.94}
			&\frac12\frac{d}{dt}\int_{\R^3}|\na^{\nu}\na_\beta u_\varepsilon|^2\,dx+ \frac{a}{\g_1}\|\na^{\nu}\na^2u_\varepsilon\|_{L^2}^2 +\frac{1}{2\g_1\varepsilon^2}\|\na^{\nu}\na_\beta(|u_\varepsilon|^2)\|_{L^2}^2
			\\
		   \leq&(C_2+\delta_1)\|\na^{k+2}v_\varepsilon\|_{L^2}^2+\delta_2\|\na^{k+1}\p_t u_\varepsilon\|_{L^2}^2+3\delta_3\|\na^{k+3}u_\varepsilon\|_{L^2}^2\nonumber\\
		   &+C\||\na u_\varepsilon||\na^k\p_t u_\varepsilon|\|_{L^2}^2+C(\Lambda_\infty+\|\na\p_t u_\varepsilon\|_{L^2}^2)\mathcal{E}_{k+1}\nonumber\\
		   &+C\mathcal{E}_k\Lambda_\infty\|\na u_\varepsilon\|_{L^\infty}^2+C\|\na\p_tu_\varepsilon\|^2_{H^{k-1}}\|\na u_\varepsilon\|^2_{H^k}.\nonumber
			\end{align*}
			Summing \eqref{2.94} with \eqref{2.88} yields
			\begin{align*}\alabel{2.95}
						&\frac{d}{dt}\int_{\R^3}\frac{1}{2}\na^{\nu}\na_{\a} u_\varepsilon^iW_{p^i_\a p^j_\g}\na^{\nu}\na_{\g} u_\varepsilon^j+\frac{1}{2}|\na^{\nu}\na_\beta u_\varepsilon|^2+\frac{1}{4\varepsilon^2}|\na^\nu(|u_\varepsilon|^2)|^2\,dx
						\\
						&+\frac{3\g_1}{4}\|\na^{\nu}\p_t u_\varepsilon\|_{L^2}^2  + \frac{a}{\g_1}\|\na^{\nu}\na^2u_\varepsilon\|_{L^2}^2  +\frac{1}{2\g_1\varepsilon^2}\|\na^{\nu}(|u_\varepsilon|^2)\|_{L^2}^2  \nonumber
						\\
						\leq&\tilde{C}_1\|\na^{k+2} v_\varepsilon\|^2_{L^2} +4 \delta_2\|\na^{k+1}\p_t u_\varepsilon\|^2_{L^2} +5 \delta_3\|\na^{k+3} u_\varepsilon\|^2_{L^2}
						\\
						&+C\||\na u_\varepsilon||\na^k\p_t u_\varepsilon|\|_{L^2}^2+C(\Lambda_\infty+\|\p_tu_\varepsilon\|_{H^1}^2)\mathcal{E}_{k+1}\nonumber\\
						&+C\mathcal{E}_k\Lambda_\infty\|\na u_\varepsilon\|_{L^\infty}^2+C\|\na\p_tu_\varepsilon\|_{H^{k-1}}\|\na u_\varepsilon\|_{H^k},\nonumber
						\end{align*}
						where $\tilde{C_1}=C_1+C_2+2\delta_1$.
					 Multiplying \eqref{2.86} by $ \tilde C_2:=\max\{1,4\alpha_4^{-1}(\tilde{C}_1+1)\}$, adding with \eqref{2.95} and take a summation over all the index $\nu$ of order $k+1$, we can obtain
					 \begin{align*}\alabel{2.96}
					 &\frac{1}{2}\frac{d}{dt}\mathcal{E}_{k+1}(t)+\frac{d}{dt}\int_{\R^3}\na^{k+1}\D_{\alpha}u_\varepsilon^iW_{p_\alpha^ip^j_\gamma}\na^{k+1}\D_{\gamma}u_\varepsilon^j  +\frac{1}{2\varepsilon^2}|\na^{k+1}(|u_\varepsilon|^2)|^2\,dx\\
					 &+\tilde{a}\mathcal{D}_{k+1}(t)+(4\g_1\varepsilon^2)^{-1}\|\na^{k+1}(|u_\varepsilon|^{-2})\|_{L^2}^2
					 \\
					 \leq&C\||\na u_\varepsilon||\na^k\p_t u_\varepsilon|\|_{L^2}^2+C(\Lambda_\infty+\|\p_tu_\varepsilon\|_{H^1}^2)\mathcal{E}_{k+1}+C\mathcal{E}_k\Lambda_\infty\|\na u_\varepsilon\|_{L^\infty}^2\nonumber\\
					 &+C\|\na\p_tu_\varepsilon\|_{H^{k-1}}\|\na u_\varepsilon\|_{H^k},\nonumber
					 \end{align*}
			where we have chosen   $\delta_1=\a_4/12$, $\delta_2=\g_1(16(\tilde C_2+1))^{-1}$, $\delta_3=a(2\g_1(2\tilde C_2+5))^{-1}$ and $\tilde{a}=\min\{1,\frac{a}{2\g_1},\frac{\g_1}{2}\}$.  Furthermore, it follows from using uniform estimates of  the strong solution in Proposition \ref{prop2.1} that for any $\delta>0$ there exists a $R_0$  depending only on $M$ such that
			\begin{align}\label{higher estimates P1.0}
			\sup_{0\leq t\leq T_M,x_0\in\R^3}\int_{B_{R_0}(x_0)}|v_\varepsilon|^3+|\na u_\varepsilon|^3  \,dx\leq \delta^3.
			\end{align}
			By standard covering argument, we have
			\begin{align*}
				&C\||\na u_\varepsilon||\na^k\p_t u_\varepsilon|\|_{L^2}^2
				\leq C\sum_i\left(\int_{B_{R_0}(x_i)}|\na^k\p_t u_\varepsilon|^6\;dx\right)^{1/3}\delta^2
				\leq C\delta^2\|\na^{k+1}\p_t u_\varepsilon\|_{L^2}^2
			\end{align*}
			so that we can choose $\delta$ small enough satisfying $C\delta^2=\frac{\tilde{a}}{2}$ to conclude
			\begin{align*}\alabel{2.97} &\frac{1}{2}\frac{d}{dt}\mathcal{E}_{k+1}(t)+\frac{d}{dt}\int_{\R^3}\na^{k+1}\D_{\alpha}u_\varepsilon^iW_{p_\alpha^ip^j_\gamma}\na^{k+1}\D_{\gamma}u_\varepsilon^j  +\frac{1}{2\varepsilon^2}|\na^{k+1}(|u_\varepsilon|^2)|^2\,dx\\
			&+\frac{\tilde{a}}{2}\mathcal{D}_{k+1}(t)+(4\g_1\varepsilon^2)^{-1}\|\na^{k+1}(|u_\varepsilon|^{-2})\|_{L^2}^2
			\\
			\leq&C(\Lambda_\infty+\|\p_tu_\varepsilon\|_{H^1}^2)\mathcal{E}_{k+1}+C\mathcal{E}_k\Lambda_\infty\|\na u_\varepsilon\|_{L^\infty}^2+C\left(\sum_{m=0}^{k}\mathcal{E}_m\right)\left(\sum_{m=1}^{k}\mathcal{D}_m\right).\nonumber
			\end{align*}
			By inductive assumptions, it holds for any $l=0,1,\cdots,k$ with $k\geq 1$, any $\tau>0$ and any $s\in(\tau,T_M]$ that
			\begin{align}\label{Hyp}
			\mathcal{E}_l(s)+\int_{\tau}^{s} \mathcal{D}_{l}(t)+\varepsilon^{-2}\|\na^{l+1}(|u_\varepsilon(t)|^2)\|_{L^2}^2\,dt\leq C(\tau,l).
			\end{align}
			Applying the mean value theorem in \eqref{Hyp} for $l=k$,   there exists a $\tau_{\varepsilon}\in(\tau, 2\tau)$ such that
			\begin{equation}\label{higher estimates P1.3}
			\mathcal{E}_{k+1}(\tau_\varepsilon)+\varepsilon^{-2}\|\na^{k+1}(|u_\varepsilon(\tau_\varepsilon)|^2)\|_{L^2}^2\leq C(\tau,k).
			\end{equation}
			On the other hand, by the Sobolev embedding and Proposition 3.1, one has
			\begin{align}\label{LL1}
		    (\Lambda_\infty+\|\p_tu_\varepsilon\|_{H^1}^2)&\leq C(\|v_\varepsilon\|_{H^2}^2+\|\na u_\varepsilon\|_{H^2}^2+\|\p_tu_\varepsilon\|_{H^1}^2)\leq C(\mathcal{D}_0+\mathcal{D}_1).
			\end{align}
			Moreover, the Sobolev embedding and \eqref{Hyp} imply
			\begin{equation}\label{LL2}
			\|\na u_\varepsilon\|_{L^\infty}^2\leq C\|\na u_\varepsilon\|_{H^2}^2\leq C(\|\na^{k+2}u_\varepsilon\|_{L^2}^2+1)
			\end{equation}
			when $k\geq 1$.
			Therefore, we apply \eqref{Hyp}-\eqref{LL2} into \eqref{2.97} 
			\begin{align*}\alabel{2.98} &\frac{1}{2}\frac{d}{dt}\mathcal{E}_{k+1}(t)+\frac{d}{dt}\int_{\R^3}\na^{k+1}\D_{\alpha}u_\varepsilon^iW_{p_\alpha^ip^j_\gamma}\na^{k+1}\D_{\gamma}u_\varepsilon^j  +\frac{1}{2\varepsilon^2}|\na^{k+1}(|u_\varepsilon|^2)|^2\,dx\\
						&+\frac{\tilde{a}}{2}\mathcal{D}_{k+1}(t)+(4\g_1\varepsilon^2)^{-1}\|\na^{k+1}(|u_\varepsilon|^{-2})\|_{L^2}^2
						\\
						\leq&C(\mathcal D_0+\mathcal D_1)\mathcal{E}_{k+1}+C\sum_{m=1}^{k}\mathcal{D}_m.\nonumber
						\end{align*}
			We apply the Gronwall  inequality in \eqref{2.98} for $t\in (\tau_\varepsilon,s)$ and conclude that \eqref{higher estimates} holds for $l=k+1$ on the $(2\tau,s)$. Since $\tau$ is an arbitrary positive constant,  we prove \eqref{higher estimates} for any $s\in (\tau,T_M]$ and $l=k+1$  which completes a proof of this lemma.
			\end{proof}

Next, we have the following strong convergence lemma
\begin{lemma}\label{lem4.2}
	Let $(v_\varepsilon, u_\varepsilon)$ be the strong solution, obtained in Proposition \ref{prop2.1}, to the system \eqref{G-L1}-\eqref{G-L3} in $\R^3\times[0, T_M]$. Then, for any $t\in[0,T_M]$, we have
	\begin{align}
	&\lim\limits_{\varepsilon\rightarrow 0}\int_{\R^3}|v_\varepsilon(t)|^2\,dx=\int_{\R^3}|v(t)|^2\,dx,\label{4.45}\\
	&\lim\limits_{\varepsilon\rightarrow 0}\int_{\R^3}|\nabla u_\varepsilon(t)|^2\,dx=\int_{\R^3}|\nabla u(t)|^2\,dx,\label{4.46}\\
	& \lim\limits_{\varepsilon\rightarrow 0}\int_{\R^3}\varepsilon^{-2}|(1-|u_\varepsilon(t)|^2)|^2\,dx=0.\label{4.47}
	\end{align}
	
\end{lemma}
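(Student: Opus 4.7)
The plan is to compare the Ginzburg--Landau energy identity in Lemma \ref{energy-identity} with an analogous identity for the Ericksen--Leslie system and then exploit the weak lower semicontinuity of each nonnegative term on the left-hand side. First I would derive the EL energy identity: the same computation as in Lemma \ref{energy-identity}, specialised to $(v,u)$ with $|u|=1$ so that all $\varepsilon^{-2}$ contributions vanish, gives
\begin{equation*}
\int_{\R^3}\!\left(\tfrac12|v(t)|^2+W(u(t),\D u(t))\right)\,dx + \int_0^t\!\mathcal D_{EL}(s)\,ds = \int_{\R^3}\!\left(\tfrac12|v_0|^2+W(u_0,\D u_0)\right)\,dx,
\end{equation*}
where $\mathcal D_{EL}$ denotes the same dissipation quadratic form as in Lemma \ref{energy-identity} evaluated on $(v,u)$ in place of $(v_\varepsilon,u_\varepsilon)$. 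Taking the Ginzburg--Landau initial data to be $(v_{0,\varepsilon},u_{0,\varepsilon})=(v_0,u_0)$, the condition $|u_0|=1$ gives $E_{GL,\varepsilon}(0)=E_{EL}(0)$.

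From the compactness established in the proof of Theorem \ref{thm1}, at each fixed $t\in[0,T_M]$ one has $v_\varepsilon(t)\wc v(t)$ and $\D u_\varepsilon(t)\wc\D u(t)$ weakly in $L^2(\R^3)$, $u_\varepsilon\to u$ strongly in $C([0,T_M];L^2_{\loc}(\R^3))$, and $A_\varepsilon,\Omega_\varepsilon,N_\varepsilon,h_\varepsilon$ converge weakly in $L^2(\R^3\times(0,t))$ with $Au,\Omega u,N,h$ as their respective weak limits (identified by pairing strong $L^2_{\loc}$ convergence of $u_\varepsilon$ against weak $L^2$ convergence of $A_\varepsilon,\Omega_\varepsilon,\partial_t u_\varepsilon$). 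Weak lower semicontinuity then yields
\begin{align*}
\liminf_{\varepsilon\to 0}\int|v_\varepsilon(t)|^2\,dx &\geq \int|v(t)|^2\,dx,\\
\liminf_{\varepsilon\to 0}\int W(u_\varepsilon,\D u_\varepsilon)\,dx &\geq \int W(u,\D u)\,dx,\\
\liminf_{\varepsilon\to 0}\int_0^t\!\mathcal D_{GL,\varepsilon}(s)\,ds &\geq \int_0^t\!\mathcal D_{EL}(s)\,ds,
\end{align*}
with the $W$-inequality following from Ioffe's theorem (continuity of $W$ in $u$ plus convexity in $p$ from \eqref{Co2}), together with the trivial bound $\liminf\int(1-|u_\varepsilon(t)|^2)^2/(4\varepsilon^2)\,dx\geq 0$. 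Summing these four inequalities and comparing with the GL and EL energy identities, which share the same initial energy $E_{EL}(0)$, sandwiches both sides between $E_{EL}(0)$, so every inequality must be an equality. The elementary fact that nonnegative sequences $a_{\varepsilon,i}$ with $\liminf a_{\varepsilon,i}\geq a_i$ and $\sum_i a_{\varepsilon,i}\to\sum_i a_i$ must satisfy $a_{\varepsilon,i}\to a_i$ individually (applied to convergent subsequences) then yields \eqref{4.45}, \eqref{4.47}, and $\int W(u_\varepsilon,\D u_\varepsilon)\to\int W(u,\D u)$.

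To deduce \eqref{4.46}, since $W(u,\D u)$ is only equivalent, not equal, to $|\D u|^2$, I would invoke the uniform convexity of $W$ in $p$ from \eqref{Co2}:
\begin{equation*}
W(u_\varepsilon,\D u_\varepsilon)\geq W(u_\varepsilon,\D u) + W_p(u_\varepsilon,\D u)\cdot(\D u_\varepsilon-\D u) + \tfrac{a}{2}|\D u_\varepsilon-\D u|^2.
\end{equation*}
Integrating over $\R^3$ and letting $\varepsilon\to 0$: strong convergence $u_\varepsilon\to u$ together with a cut-off argument using the uniform bound $\|u_\varepsilon-b\|_{H^1}\leq C$ gives $\int W(u_\varepsilon,\D u)\to\int W(u,\D u)$, while the cross term vanishes by weak convergence $\D u_\varepsilon\wc\D u$. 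Combined with the $W$-energy limit obtained in the previous paragraph, this forces $\int|\D u_\varepsilon-\D u|^2\to 0$, which is stronger than \eqref{4.46}. The main obstacle I anticipate is carefully identifying the weak $L^2$ limits of the coupled terms such as $u_\varepsilon^T A_\varepsilon u_\varepsilon$ and $A_\varepsilon u_\varepsilon$ entering $\mathcal D_{GL,\varepsilon}$ when verifying its lower semicontinuity, together with the tail estimate at spatial infinity required to upgrade $L^2_{\loc}$ convergence of $u_\varepsilon$ to convergence over the full $\R^3$.
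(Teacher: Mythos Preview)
Your approach is essentially the paper's: compare the Ginzburg--Landau energy identity (Lemma \ref{energy-identity}) with the Ericksen--Leslie energy identity, apply weak lower semicontinuity to each nonnegative term, and conclude from equality of the initial energies that every inequality is an equality. One point you gloss over: the EL dissipation is not obtained by literally ``the same computation specialised to $(v,u)$'', because \eqref{E-L3} is only the tangential part of \eqref{G-L3}; the paper first derives an EL identity with coefficient $\alpha_1+\gamma_2^2/\gamma_1$ on $\int|u^TAu|^2$ and with $\gamma_1^{-1}\int|h-(u\cdot h)u|^2$ in place of $\gamma_1^{-1}\int|h_\varepsilon|^2$, then uses $u\cdot N=0$ (from $|u|=1$) and the expansion $|h-(u\cdot h)u|^2=|\gamma_1 N+\gamma_2 Au|^2-\gamma_2^2|u^TAu|^2$ to put the two identities in matching form---this algebraic step is what makes the termwise comparison work. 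Your uniform-convexity argument for \eqref{4.46} is in fact more explicit than the paper's one-line appeal to \eqref{Co2}.
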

\begin{proof}
	It follows from Lemma \ref{energy-identity} that
	\begin{align}\label{ene-id1}
	&\int_{\R^3}\left(\frac{|v_\varepsilon(t)|^2}{2}+W(u_\varepsilon,\D u_\varepsilon)(t)+\frac{1}{4\varepsilon^2}(1-|u_\varepsilon(t)|^2)^2\right)\,dx\\
	&+\alpha_4\int_{0}^{t}\int_{\R^3}|A_\varepsilon|^2\;dxdt+\alpha_1\int_{0}^{t}\int_{\R^3}|u_\varepsilon^TA_\varepsilon u_\varepsilon|^2\,dxdt\nonumber\\
	&+\beta\int_{0}^{t}\int_{\R^3}|A_\varepsilon u_\varepsilon|^2\,dxdt+\frac{1}{\g_1}\int_{0}^{t}\int_{\R^3}|\g_1N_\varepsilon+\g_2A_\varepsilon u_\varepsilon|^2\,dxdt\nonumber\\
	=&\int_{\R^3}\left(\frac{|v_0|^2}{2}+W(u_0,\nabla u_0)\right)\,dx.\nonumber
	\end{align}
	By the lower semi-continuity, we have
	\begin{align}\label{low}
	&\int_{\R^3}|v(t)|^2\,dx\leq\liminf\limits_{\varepsilon\rightarrow 0}\int_{\R^3}|v_\varepsilon(t)|^2\,dx,\\
	&\int_{\R^3}W(u,\nabla u)(t)\,dx\leq\liminf\limits_{\varepsilon\rightarrow 0}\int_{\R^3}W(u_\varepsilon,\nabla u_\varepsilon)(t)\,dx,\nonumber\\
	&\int_{0}^{t}\int_{\R^3}|A|^2\,dxdt\leq\liminf\limits_{\varepsilon\rightarrow 0}\int_{0}^{t}\int_{\R^3}|A_\varepsilon|^2\,dxdt,\nonumber\\
	&\int_{0}^{t}\int_{\R^3}|u^TA u|^2\,dxdt\leq\liminf\limits_{\varepsilon\rightarrow 0}\int_{0}^{t}\int_{\R^3}|u_\varepsilon^TA_\varepsilon u_\varepsilon|^2\,dxdt,\nonumber\\
	&\int_{0}^{t}\int_{\R^3}|A u|^2\,dxdt\leq\liminf\limits_{\varepsilon\rightarrow 0}\int_{0}^{t}\int_{\R^3}|A_\varepsilon u_\varepsilon|^2\,dxdt,\nonumber\\
	&\int_{0}^{t}\int_{\R^3}|\g_1N+\g_2A u|^2\,dxdt\leq\liminf\limits_{\varepsilon\rightarrow 0}\int_{0}^{t}\int_{\R^3}|\g_1N_\varepsilon+\g_2A_\varepsilon u_\varepsilon|^2\,dxdt.\nonumber
	\end{align}
	On the other hand, using a similar argument in Lemma \ref{energy-identity} (c.f. \cite{WZZ}), one has
	\begin{align*}
	&\int_{\R^3}\left(\frac{|v(t)|^2}{2}+W(u,\D u)(t)\right)\;dx+(\alpha_1+\frac{\g_2^2}{\g_1})\int_{0}^{t}\int_{\R^3}|u^TA u|^2\,dxdt\\
	&+\alpha_4\int_{0}^{t}\int_{\R^3}|A|^2\,dxdt+\beta\int_{0}^{t}\int_{\R^3}|Au|^2\,dxdt+\frac{1}{\g_1}\int_{0}^{t}\int_{\R^3}|h-(u\cdot h)u|^2\,dxdt\nonumber\\
	=&\int_{\R^3}\left(\frac{|v_0|^2}{2}+W(u_0,\nabla u_0)\right)\,dx.\nonumber
	\end{align*}
	It follows from \eqref{E-L3} that
	\begin{align*}
	\int_{0}^{t}\int_{\R^3}|h-(u\cdot h)u|^2\,dxdt=&\int_{0}^{t}\int_{\R^3}|\g_1N+\g_2 \left(Au-(u^TAu)u\right)|^2\,dxdt\\
	=&\int_{0}^{t}\int_{\R^3}|\g_1N+\g_2 Au|^2\,dxdt-\g_2^2\int_{0}^{t}\int_{\R^3}|u^TAu|^2\,dxdt,
	\end{align*}
	where we have used $u\cdot N=0$ due to the fact that $|u|=1$. Hence, we have the energy identity for the Ericksen-Leslie system \eqref{E-L1}-\eqref{E-L3} that
	\begin{align}\label{ener-id2}
	&\int_{\R^3}\left(\frac{|v(t)|^2}{2}+W(u,\D u)(t)\right)\,dx+\alpha_1\int_{0}^{t}\int_{\R^3}|u^TA u|^2\,dxdt+\alpha_4\int_{0}^{t}\int_{\R^3}|A|^2\,dxdt\\
	&+\beta\int_{0}^{t}\int_{\R^3}|Au|^2\,dxdt+\frac{\g_2^2}{\g_1}\int_{0}^{t}\int_{\R^3}|\g_1N+\g_2Au|^2\,dxdt\nonumber\\
	&=\int_{\R^3}\left(\frac{|v_0|^2}{2}+W(u_0,\nabla u_0)\right)\,dx.\nonumber
	\end{align}
	Comparing \eqref{ene-id1} with \eqref{ener-id2} and using \eqref{low}, we first obtain \eqref{4.47}. Repeating the comparison of \eqref{ene-id1} and \eqref{ener-id2}, we have \eqref{4.45} and
	\begin{equation*}
	\int_{\R^3}W(u,\nabla u)\,dx=\lim\limits_{\varepsilon\rightarrow 0}\int_{\R^3}W(u_\varepsilon,\nabla u_\varepsilon)\,dx
	\end{equation*}
	which implies \eqref{4.46}, since $W(u,\nabla u)$ satisfies \eqref{Co2}.
\end{proof}
Now we give a proof of Theorem \ref{thm2}.
\begin{proof}[\bf Proof of Theorem \ref{thm2}]
	Let $(u,v)$ be the strong solution  to the Ericksen-Leslie system \eqref{E-L1}-\eqref{E-L3} in $\R^3\times[0,T^\ast)$ with initial data $(u_0,v_0)\in H^2_b(\R^3)\times H^1(\R^3)$, where  $T^\ast$ is its maximal existence time. Given any $T\in (0,T^\ast)$, set
	\begin{equation*}
	M= 2\sup_{0\leq t\leq T}\|(\nabla u,v)\|_{H^1(\R^3)}^2.
	\end{equation*}
	By Proposition \ref{prop2.1}, the system \eqref{G-L1}-\eqref{G-L3}  with initial data $(u_0,v_0)$ has a unique strong solution $(u_\varepsilon,v_\varepsilon)$ in $\R^3\times[0,T_M]$ satisfying
	\begin{equation}\label{4.1}
	\frac{3}{4}\leq |u_\varepsilon|\leq\frac{5}{4}
	\end{equation}
	and
	\begin{align}\label{4.2}
	&\sup_{0\leq t\leq T_M}\left(\|v_\varepsilon\|_{H^1}^2+\|\D u_\varepsilon\|_{H^1}^2+\varepsilon^{-2}\|(1-|u_\varepsilon|^2)\|_{H^1}^2\right)+\|\D v_\varepsilon\|_{L^2(0,T_0;H^1)}^2\\
	&+\|\D^2 u_\varepsilon\|_{L^2(0,T_0;H^1)}^2+\|\p_t u_\varepsilon\|_{L^2(0,T_0;H^1)}^2+\varepsilon^{-2}\|\D(|u_\varepsilon|^2)\|_{L^2(0,T_0;H^1)}^2\leq C_M\nonumber
	\end{align}
	for any $\varepsilon\leq \varepsilon_M$.
	Next, applying Lemma \ref{higher estimates lemma}, one has the following higher estimates
	\begin{align*}
	&(v_\varepsilon,\nabla u_\varepsilon)\in L^\infty(\tau,T_M;H^{k}(\R^3))\cap L^2(\tau,T_M;H^{k+1}(\R^3)),\\
	&(\p_tv_\varepsilon,\p_t\nabla u_\varepsilon)\in L^2(\tau,T_M;H^{k-1}(\R^3)),
	\end{align*}
	which have uniform bounds   in $\varepsilon$, for any $k\geq 2$.
	It follows from the Aubin-Lions Lemma that there exists a subsequence such that
	\begin{align*}
	&v_{\varepsilon_i}\rightarrow v\quad \text{in }C([\tau,T_M];H^{k-1}(B_R(0)))\\
	&u_{\varepsilon_i}\rightarrow u\quad \text{in } C([\tau,T_M];H^k(B_R(0)))
	\end{align*}
	for any $k\geq 2$ and $R\in(0,\infty)$. This together with \eqref{4.2} implies that
	\begin{equation*}
	(u_{\varepsilon_i},v_{\varepsilon_i})\rightarrow(u,v)\quad\text{in } C([\tau,T_M];C^{\infty}_{loc}(\R^3))
	\end{equation*}
	with $|u|=1$. By Theorem \ref{thm1}, $(u,v)$ must be the unique solution to the Ericksen-Leslie system \eqref{E-L1}-\eqref{E-L3}. Since $(u,v)$ is unique and any sequence $(u_\varepsilon,v_\varepsilon)$ has a convergent subsequence $(u_{\varepsilon_i},v_{\varepsilon_i})$, then the sequence  $(u_\varepsilon,v_\varepsilon)$ converges to $(u,v)$ in $C([\tau,T_M];C^{\infty}_{loc}(\R^3))$. Then, using the equations \eqref{E-L1}-\eqref{E-L3}, it is not difficult to prove the smooth convergence in $t$; that is
	\begin{equation*}
	(u_\varepsilon,v_\varepsilon)\rightarrow(u,v)\quad\text{in } C^\infty([\tau,T_M];C^{\infty}_{loc}(\R^3)).
	\end{equation*}
	Now, we prove that $T_M$ can be extended to $T$.
	
	Suppose that $T_M<T$. Then it follows from Lemma \ref{higher estimates}, \ref{lem4.2} and integration by parts that
	\begin{equation*}
	\lim\limits_{\varepsilon\rightarrow 0}\|(\nabla v_\varepsilon-\nabla v)(T_M)\|_{L^2}^2\leq C\lim\limits_{\varepsilon\rightarrow 0}\|(v_\varepsilon-v)(T_M)\|_{L^2}^2\|\nabla^2 v_\varepsilon-\nabla^2 v)(T_M)\|_{L^2}^2=0.
	\end{equation*}
	Similarly,
	\begin{equation*}
	\lim\limits_{\varepsilon\rightarrow 0}\|(\nabla^2 u_\varepsilon-\nabla^2 u)(T_M)\|_{L^2}^2=0,\quad\lim\limits_{\varepsilon\rightarrow 0}\varepsilon^{-2}\|\nabla (1-|u_\varepsilon|^2)(T_M)\|_{L^2}^2=0
	\end{equation*}
	Therefore, we obtain
	\begin{align*}
	&\lim\limits_{\varepsilon\rightarrow 0}\left(\|v_\varepsilon(T_M)\|_{H^1(\R^3)}^2+\|\nabla u_\varepsilon(T_M)\|_{H^1(\R^3)}^2+\varepsilon^{-2}\|(1-|u_\varepsilon|^2)^2(T_M)\|_{H^1(\R^3)}^2\right)\\
	=&\|v(T_M)\|_{H^1(\R^3)}^2+\|\nabla u(T_M)\|_{H^1(\R^3)}^2\leq\frac{M}{2}.\nonumber
	\end{align*}
	Hence,  for sufficiently small $\varepsilon$, one has
	\begin{equation*}
	\|v_\varepsilon(T_M)\|_{H^1(\R^3)}^2+\|\nabla u_\varepsilon(T_M)\|_{H^1(\R^3)}^2+\varepsilon^{-2}\|(1-|u_\varepsilon|^2)^2(T_M)\|_{H^1(\R^3)}^2\leq M.
	\end{equation*}
	Moreover, \eqref{4.47} and Lemma \ref{higher estimates lemma} imply $\frac{3}{4}\leq |u_\varepsilon(T_M)|\leq\frac{5}{4}$ for sufficiently small $\varepsilon$. Therefore, using $(v_\varepsilon(T_M),u_\varepsilon(T_M))$ as a new initial data at $t=T_M$ and applying Proposition \ref{prop2.1} again, we can extend the strong solution $(u_\varepsilon,v_\varepsilon)$ to the time  $T_1=:\min\{T,2T_M\}$. By the same argument above, it is obvious that
	\begin{equation*}
	(u_\varepsilon,v_\varepsilon)\rightarrow(u,v)\quad\text{in } C^\infty([\tau,T_1];C^{\infty}_{loc}(\R^3)).
	\end{equation*}
	We repeat the above two steps and establish the convergence up to $T$ for any $T<T^\ast$.
	This completes the proof of Theorem \ref{thm2}.
\end{proof}

\noindent{\bf Acknowledgments:} The research  of the second
author was supported by the Australian Research Council
grant DP150101275. The third author was supported by the Australian Research Council
grant DP150101275 as a  postdoctoral fellow.

\end{document}